\documentclass[oneside,english]{siamart190516}
\usepackage[T1]{fontenc}
\usepackage[latin9]{inputenc}
\setcounter{secnumdepth}{2}
\setcounter{tocdepth}{2}
\synctex=-1
\usepackage{array}
\usepackage{units}
\usepackage{url}
\usepackage{multirow}
\usepackage{amsmath}
\usepackage{amssymb}
\usepackage{graphicx}
\PassOptionsToPackage{normalem}{ulem}
\usepackage{ulem}

\makeatletter

\providecommand{\tabularnewline}{\\}

\usepackage{times}
\usepackage{lipsum}
\usepackage{amsfonts}
\usepackage{graphicx}
\usepackage{epstopdf}
\usepackage{algorithmic}
\usepackage[symbol]{footmisc}
\ifpdf%
  \DeclareGraphicsExtensions{.eps,.pdf,.png,.jpg}
\else
  \DeclareGraphicsExtensions{.eps}
\fi

\usepackage{booktabs}
\def\vec#1{\boldsymbol{#1}}

\newsiamremark{remark}{Remark}

\makeatother

\usepackage{babel}

\begin{document}
\title{Approximate Generalized Inverses with Iterative Refinement \\
for $\epsilon$-Accurate Preconditioning of Singular Systems}
\author{Xiangmin Jiao\footnotemark[1]\ \footnotemark[2] \and Qiao Chen\footnotemark[1]}

\maketitle
\footnotetext[1]{Department of Applied Mathematics and Statistics and Institute for Advanced Computational Science, Stony Brook University, Stony Brook, NY 11794, USA.}
\footnotetext[2]{Corresponding author. Email: xiangmin.jiao@stonybrook.edu.}
\headers{$\epsilon$-Accurate Preconditioning of Singular Systems}{X. Jiao and Q. Chen}
\begin{abstract}
We introduce a new class of preconditioners to enable flexible GMRES
to find a least-squares solution, and potentially the pseudoinverse
solution, of large-scale sparse, asymmetric, singular, and potentially
inconsistent systems. We develop the preconditioners based on a new
observation that \emph{generalized inverses} (i.e., $\boldsymbol{A}^{g}\in\{\boldsymbol{G}\mid\boldsymbol{A}\boldsymbol{G}\boldsymbol{A}=\boldsymbol{A}\}$)
enable the preconditioned Krylov subspaces to converge in a single
step. We then compute an \emph{approximate generalized inverse} (\emph{AGI})
efficiently using a \emph{hybrid incomplete factorization} (\emph{HIF}),
which combines multilevel incomplete LU with rank-revealing QR on
its final Schur complement. We define the criteria of $\epsilon$-accuracy
and stability of AGI to guarantee the convergence of preconditioned
GMRES for consistent systems. For inconsistent systems, we fortify
HIF with iterative refinement to obtain \emph{HIFIR}, which allows
accurate computations of the null-space vectors. By combining the
two techniques, we then obtain a new solver, called \emph{PIPIT},
for obtaining the pseudoinverse solutions for systems with low-dimensional
null spaces. We demonstrate the robustness of HIF and HIFIR and show
that they improve both accuracy and efficiency of the prior state
of the art by orders of magnitude for systems with up to a million
unknowns.
\end{abstract}
\begin{keywords}
rank-deficient least squares; pseudoinverse solution; generalized
inverses; flexible Krylov subspaces; hybrid incomplete factorization;
iterative refinement; variable preconditions
\end{keywords}
\begin{AMS}
65F08, 65F20, 65F50
\end{AMS}

\section{Introduction\label{sec:Introduction}}

We consider the problem of finding an accurate and stable solution
of a potentially inconsistent linear system, 
\begin{equation}
\boldsymbol{A}\boldsymbol{x}\approx\boldsymbol{b},\label{eq:least-squares-linear-system}
\end{equation}
or more precisely 
\begin{equation}
\boldsymbol{x}_{\text{LM}}=\arg\min_{\boldsymbol{x}}\left\Vert \boldsymbol{b}-\boldsymbol{A}\boldsymbol{x}\right\Vert _{2},\label{eq:least-sq-solution}
\end{equation}
where $\boldsymbol{A}\in\mathbb{R}^{n\times n}\backslash\{\boldsymbol{0}\}$
is asymmetric and potentially singular, $\boldsymbol{b}\in\mathbb{R}^{n}$,
and $\boldsymbol{x}_{\text{LM}}\in\mathbb{R}^{n}$ is a least-squares
(LS) solution. In general, \eqref{eq:least-squares-linear-system}
is \emph{inconsistent} in that $\left\Vert \boldsymbol{b}-\boldsymbol{A}\boldsymbol{A}^{+}\boldsymbol{b}\right\Vert \gg\epsilon_{\text{mach}}\left\Vert \boldsymbol{b}\right\Vert $,
where $\boldsymbol{A}^{+}$ denotes the Moore--Penrose pseudoinverse
of $\boldsymbol{A}$ and $\epsilon_{\text{mach}}$ denotes the machine
epsilon of a given floating-point number system. Furthermore, $\boldsymbol{A}$
may be \emph{structurally singular}, in that its corresponding bipartite
graph may not have a full match between its rows and columns. Mathematically,
\eqref{eq:least-squares-linear-system} can be posed as a \emph{rank-deficient
least squares }(RDLS) problem, for which the \emph{pseudoinverse solution}
minimizes the 2-norm among the LS solutions, i.e., 
\begin{equation}
\boldsymbol{x}_{\mathrm{PI}}=\arg\min_{\boldsymbol{x}}\left\Vert \boldsymbol{x}\right\Vert _{2}\quad\text{subject to}\quad\min_{\boldsymbol{x}}\left\Vert \boldsymbol{b}-\boldsymbol{A}\boldsymbol{x}\right\Vert _{2}.\label{eq:pseudo-inverse-solution}
\end{equation}
For relatively small or moderate-sized systems, the pseudoinverse
solution can be obtained using the truncated singular value decomposition
(TSVD) \cite[p. 291]{Golub13MC}, i.e., $\boldsymbol{A}=\boldsymbol{U}\boldsymbol{\Sigma}\boldsymbol{V}^{T}$
, so that 
\begin{equation}
\boldsymbol{x}_{\mathrm{PI}}=\sum_{\{i\mid\sigma_{i}\gg\epsilon_{\text{mach}}\sigma_{1}\}}\left(\boldsymbol{u}_{i}^{T}\boldsymbol{b}/\sigma_{i}\right)\boldsymbol{v}_{i}+\mathcal{O}(\boldsymbol{\epsilon}_{\text{mach}}).\label{eq:svd-solution}
\end{equation}
Alternatively, an LS solution $\boldsymbol{x}_{\mathrm{LS}}$ can
be obtained using truncated QR with column pivoting \cite{Golub13MC}
(aka rank-revealing QR (RRQR) \cite{chan1987rank}), i.e., $\boldsymbol{A}\boldsymbol{P}=\boldsymbol{Q}\boldsymbol{R}$,
where the diagonal entries of $\boldsymbol{R}$ are nonnegative and
are in descending order. Then,
\begin{equation}
\boldsymbol{x}_{\mathrm{LS}}=\boldsymbol{P}_{1:r}\boldsymbol{R}_{1:r,1:r}^{-1}\left(\boldsymbol{Q}_{1:r}\right)^{T}\boldsymbol{b}+\mathcal{O}(\boldsymbol{\epsilon}_{\text{mach}}),\label{eq:least-squares-qrcp}
\end{equation}
where $r$ denotes the (numerical) rank of $\boldsymbol{R}$. If the
(right) null space of $\boldsymbol{A}$ (denoted by $\mathcal{N}(\boldsymbol{A}$))
is known \emph{a priori}, then $\boldsymbol{x}_{\mathrm{LS}}$ can
be converted to $\boldsymbol{x}_{\mathrm{PI}}$ by projecting off
its component in $\mathcal{N}(\boldsymbol{A}$).

In this work, we focus on effective preconditioners for Krylov subspace
(KSP) type methods for finding an LS solution, and potentially the
pseudoinverse solution, of large-scale, sparse, asymmetric singular
systems. Such systems often arise from partial differential equations
(PDEs) and other applications. $\boldsymbol{A}$ is often \emph{range
asymmetric}, i.e., $\mathcal{R}(\boldsymbol{A})\neq\mathcal{R}(\boldsymbol{A}^{T})$,
which is much more challenging than range-symmetric systems \cite{brown1997gmres}.
A direct solver (such as SuiteSparseQR \cite{davis2011algorithm})
or sparse SVD \cite{baglama2005augmented} is prohibitively expensive
computationally for large-scale systems with millions of unknowns.
Iterative methods (such as LSQR \cite{paige1982lsqr}, LSMR \cite{fong2011lsmr},
and variants of GMRES \cite{hayami2010gmres,morikuni2013inner,morikuni2015convergence})
and their preconditioned counterparts (such as with RIF \cite{benzi2003robust_positive_definite,benzi2003robust}
or incomplete QR \cite{li2006miqr,saad1988preconditioning,jennings1984incomplete})
are representative of the state of the art; however, significant challenges
remain open in terms of robustness and efficiency \cite{gould2017state}.

To develop effective preconditioners, we first generalize the theory
of optimal preconditioners for nonsingular systems to singular systems.
In particular, we show that a \emph{generalized inverse} of $\boldsymbol{A}$
(i.e., $\boldsymbol{A}^{g}\in\{\boldsymbol{G}\mid\boldsymbol{A}\boldsymbol{G}\boldsymbol{A}=\boldsymbol{A}\}$,
aka $\{1\}$-inverse) is an \emph{optimal} preconditioner in that
it enables a right-preconditioned KSP to converge to a (weighted)
LS solution in one iteration. Based on this optimality condition,
we propose a \emph{hybrid incomplete factorization} (\emph{HIF}) preconditioner,
which combines a multilevel incomplete LU (MLILU) factorization with
an RRQR factorization on the final Schur complement. We show that
HIF guarantees the convergence of preconditioned GMRES for consistent
systems with sufficiently small dropping thresholds. For inconsistent
systems, we fortify HIF with iterative refinement to obtain \emph{HIFIR
}(pronounced hi-fur) as a \emph{variable preconditioner} for flexible
GMRES (FGMRES) \cite{saad1993flexible}. HIFIR provides a memory-efficient
approach to improve the accuracy of HIF. We demonstrate the robustness
and efficiency of HIF and HIFIR on three important subclasses of singular
systems: (nearly) consistent systems, computation of null-space vectors,
and the pseudoinverse solutions of systems with a low-dimensional
null space (such as those from numerical PDEs). The theory and algorithms
described in this paper are extensible to complex matrices (by replacing
transpose with conjugate transpose) and to $m\times n$ RDLS problems
(by padding $\left|m-n\right|$ zero rows or columns for $m<n$ and
$m>n$, respectively). For simplicity of presentation, we focus on
$n\times n$ real matrices. However, all the results generalize to
complex matrices, and the implementation of HIFIR for both real and
complex matrices is available at \url{https://github.com/hifirworks/hifir}.

The remainder of the paper is organized as follows. In section~\ref{sec:Related-works},
we review some theoretical analyses of GMRES and some state-of-the-art
methods for RDLS systems. In section~\ref{sec:Optimal-preconditioning},
we derive the optimality conditions for right-preconditioning KSP
methods. In section~\ref{sec:HIF}, we introduce HIF for preconditioning
GMRES and prove its convergence for consistent asymmetric systems.
In section~\ref{sec:HIFIR}, we introduce HIFIR as a variable preconditioner
for FGMRES. In section~\ref{sec:Application-to-null-space}, we describe
the applications of HIFIR to computing the null spaces of matrices
and computing the pseudoinverse solution of inconsistent systems from
PDEs. In section~\ref{sec:Numerical-Experiments-and}, we present
numerical results and comparisons with some other techniques for systems
with up to a million unknowns. Section~\ref{sec:Conclusions} concludes
the paper with some discussions.

\section{\label{sec:Related-works}Flexible KSP and related methods}

We define some notation of KSP and its flexible variants, summarize
some previous theoretical analyses of preconditioned KSP, and review
some existing methods for RDLS systems.

\subsection{\label{subsec:Flexible-Krylov-subspaces} Flexible Krylov subspaces
with variable preconditioning}

Given a matrix $\boldsymbol{A}\in\mathbb{R}^{n\times n}$ and a vector
$\boldsymbol{v}\in\mathbb{R}^{n}$, the $k$th KSP associated with
$\boldsymbol{A}$ and $\boldsymbol{v}$ is 
\begin{equation}
\mathcal{K}_{k}(\boldsymbol{A},\boldsymbol{v})=\text{span}\{\boldsymbol{v},\boldsymbol{A}\boldsymbol{v},\dots,\boldsymbol{A}^{k-1}\boldsymbol{v}\}.\label{eq:Krylov-subspace}
\end{equation}
To introduce preconditioning, let us first consider the simpler case
where $\boldsymbol{A}$ is nonsingular. A right-preconditioned KSP
method with a fixed nonsingular preconditioner $\boldsymbol{M}$ solves
\begin{equation}
\boldsymbol{A}\boldsymbol{M}^{-1}\boldsymbol{y}=\boldsymbol{b}\label{eq:right-preconditioned-system}
\end{equation}
by finding a series of LS solutions $\boldsymbol{y}_{k}$ in $\mathcal{K}_{k}(\boldsymbol{A}\boldsymbol{M}^{-1},\boldsymbol{v})$,
where $\boldsymbol{v}$ is typically $\boldsymbol{b}$. The approximate
solution to the original equation is then $\boldsymbol{x}_{k}=\boldsymbol{M}^{-1}\boldsymbol{y}_{k}$.
Ideally, $\boldsymbol{M}^{-1}$ should be an accurate and stable approximation
to $\boldsymbol{A}^{-1}$, so that $\boldsymbol{A}\boldsymbol{M}^{-1}\approx\boldsymbol{A}\boldsymbol{A}^{-1}=\boldsymbol{I}$.
More generally, $\boldsymbol{A}$ and $\boldsymbol{M}$ may be singular.
Let $\boldsymbol{G}$ denote a \emph{right preconditioning operator}
(\emph{RPO}) such that $\boldsymbol{G}\boldsymbol{y}$ generalizes
$\boldsymbol{M}^{-1}\boldsymbol{y}$. We then solve
\begin{equation}
\boldsymbol{A}\boldsymbol{G}\boldsymbol{y}\approx\boldsymbol{b},\label{eq:right-preconditioned-RDLS}
\end{equation}
or more precisely $\min_{\boldsymbol{y}}\Vert\boldsymbol{A}\boldsymbol{G}\boldsymbol{y}-\boldsymbol{b}\Vert.$
The ideal choice of $\boldsymbol{G}$ for singular systems is less
apparent than that for nonsingular systems, and we will address this
issue in section~\ref{sec:Optimal-preconditioning}. In general,
the RPO may not be a matrix (e.g., multigrid preconditioners \cite{briggs2000multigrid,TOS00Multigrid})
and may vary from iteration to iteration \cite{saad1993flexible}.
For generality, let $\mathcal{G}_{k}$ denote the $k$th RPO.
\begin{definition}
\label{def:FKSP} Given a matrix $\boldsymbol{A}\in\mathbb{R}^{n\times n}$,
an initial vector $\boldsymbol{v}\in\mathbb{R}^{n}$, and variable
preconditioners $\boldsymbol{\mathcal{G}}_{k-1}=[\mathcal{G}_{1},\mathcal{G}_{2},\dots,\mathcal{G}_{k-1}]$,
the $k$th \emph{flexible Krylov subspace }(\emph{FKSP}) associated
with $\boldsymbol{A}$, $\boldsymbol{v}$, and $\boldsymbol{\mathcal{G}}_{k-1}$
is 
\begin{equation}
\mathcal{K}_{k}(\boldsymbol{A},\boldsymbol{v},\boldsymbol{\mathcal{G}}_{k-1})=\text{span}\{\boldsymbol{v},\boldsymbol{A}\mathcal{\mathcal{G}}_{1}(\boldsymbol{v}_{1}),\dots,\boldsymbol{A}\mathcal{\mathcal{G}}_{k-1}(\boldsymbol{v}_{k-1})\},\label{eq:FKSP}
\end{equation}
where $\boldsymbol{v}_{k-1}\in\mathcal{K}_{k-1}(\boldsymbol{A},\boldsymbol{v},\boldsymbol{\mathcal{G}}_{k-2})\backslash\{\boldsymbol{0}\}$
and $\boldsymbol{v}_{k-1}\perp\mathcal{K}_{k-2}(\boldsymbol{A},\boldsymbol{v},\boldsymbol{\mathcal{G}}_{k-3})$
for $k\geq2$. The \emph{flexible Krylov matrix, denoted by} $\boldsymbol{K}_{k}$,
is composed of the vectors in the right-hand side of \eqref{eq:FKSP}. 
\end{definition}

In Definition~\ref{def:FKSP}, we used a convention that $\mathcal{K}_{0}=\{\boldsymbol{0}\}$
so that $\boldsymbol{v}_{1}=\alpha\boldsymbol{v}$ for some $\alpha\neq0$.
The FKSP is said to have suffered a \emph{breakdown} if the vectors
in $\boldsymbol{K}_{k}$ are no longer linearly independent. In the
special case of right-preconditioned KSP with a fixed preconditioner
$\boldsymbol{M}$, we have $\boldsymbol{\mathcal{G}}_{k-1}=[\boldsymbol{M}^{-1},\dots,\boldsymbol{M}^{-1}]$.
Suppose $\boldsymbol{K}_{k}$ has full rank and the initial solution
is $\boldsymbol{0}$. Let $\boldsymbol{Q}_{k}$ denote the orthonormal
basis of $\mathcal{K}{}_{k}$ obtained from the QR factorization of
$\boldsymbol{K}_{k}$ through a (generalized) Arnoldi process \cite{saad1993flexible},
$\boldsymbol{q}_{j}$ be the $j$th column of $\boldsymbol{Q}_{k}$,
and $\boldsymbol{Z}_{k}$ be composed of $\boldsymbol{z}_{j}=\mathcal{\mathcal{G}}_{j}(\boldsymbol{q}_{j})$
for $j=1,2,\dots,k$. FGMRES \cite{saad1993flexible} finds $\boldsymbol{y}_{k}$,
so that $\boldsymbol{Z}_{k}\boldsymbol{y}_{k}$ is the LS solution
of \eqref{eq:least-squares-linear-system} projected onto $\mathcal{K}{}_{k+1}$,
and then $\boldsymbol{x}_{k}=\boldsymbol{Z}_{k}\boldsymbol{y}_{k}$
\cite[Section 9.4]{saad2003iterative}. Note that Definition~\ref{def:FKSP}
is specifically for FGMRES. For other flexible inner-outer KSP methods
\cite{simoncini2002flexible}, it requires adaptation before it can
be applied. It may also be generalized to left (or split) preconditioners.
However, left preconditioning may severely distort the residual and,
in turn, lead to early termination or false stagnation for ill-conditioned
systems \cite{ghai2017comparison}.

\subsection{\label{subsec:Convergence-of-right-preconditio}Breakdown conditions
of preconditioned and FKSP}

From the theoretical point of view, the convergence of KSP without
preconditioning or with fixed preconditioners is well understood \cite{brown1997gmres,calvetti2000gmres,hayami2010gmres,reichel2005bfgmres,saad2003iterative}.
In the following, we summarize two theorems on KSP with fixed preconditioners,
which are generalizations of some well-known results on unpreconditioned
KSP \cite{brown1997gmres,calvetti2000gmres}, as well as a generalization
of a theorem on FKSP \cite[Proposition 9.3]{saad2003iterative} to
singular systems. These results will provide the theoretical foundation
for the proposed preconditioner and solver in later sections.

For consistent systems, GMRES with a fixed preconditioner is guaranteed
to converge under some reasonable assumptions, as stated by the following
theorem.
\begin{theorem}
\label{thm:consistent-range-asymmetric} If \eqref{eq:least-squares-linear-system}
is consistent and $\mathcal{N}(\boldsymbol{A}\boldsymbol{G})\cap\mathcal{R}(\boldsymbol{A}\boldsymbol{G})=\{\boldsymbol{0}\}$,
then GMRES with RPO $\boldsymbol{G}$ does not break down until finding
an LS solution $\boldsymbol{x}_{\text{LS}}$ of \eqref{eq:least-squares-linear-system}
for all $\boldsymbol{b}\in\mathcal{R}(\boldsymbol{A})$ and $\boldsymbol{x}_{0}\in\mathbb{R}^{n}$
if and only if $\mathcal{R}(\boldsymbol{A})=\mathcal{R}(\boldsymbol{A}\boldsymbol{G})$.
Furthermore, $\boldsymbol{x}_{\text{LS}}$ is the pseudoinverse solution
of \eqref{eq:least-squares-linear-system} if $\mathcal{R}(\boldsymbol{G})=\mathcal{R}(\boldsymbol{A}^{T})$.
\end{theorem}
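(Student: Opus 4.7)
The plan is to reduce the claim to the classical non-breakdown characterization of GMRES on a singular but consistent system (in the spirit of \cite{brown1997gmres}), applied here to the preconditioned matrix $\boldsymbol{A}\boldsymbol{G}$. Right-preconditioned GMRES with RPO $\boldsymbol{G}$ and initial guess $\boldsymbol{x}_0$ is mathematically equivalent to unpreconditioned GMRES on $\boldsymbol{A}\boldsymbol{G}\boldsymbol{y}\approx\boldsymbol{r}_0$ with $\boldsymbol{r}_0=\boldsymbol{b}-\boldsymbol{A}\boldsymbol{x}_0$ and recovery $\boldsymbol{x}_k=\boldsymbol{x}_0+\boldsymbol{G}\boldsymbol{y}_k$. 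The assumption $\mathcal{N}(\boldsymbol{A}\boldsymbol{G})\cap\mathcal{R}(\boldsymbol{A}\boldsymbol{G})=\{\boldsymbol{0}\}$ is precisely the hypothesis that rules out a hard breakdown for unpreconditioned GMRES on $\boldsymbol{A}\boldsymbol{G}\boldsymbol{y}=\boldsymbol{r}_0$, so the only outstanding issue is when the reduced right-hand side $\boldsymbol{r}_0$ is guaranteed to lie in $\mathcal{R}(\boldsymbol{A}\boldsymbol{G})$.

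For the ``if'' direction, suppose $\mathcal{R}(\boldsymbol{A})=\mathcal{R}(\boldsymbol{A}\boldsymbol{G})$. Since $\boldsymbol{b}\in\mathcal{R}(\boldsymbol{A})$ by hypothesis and $\boldsymbol{A}\boldsymbol{x}_0\in\mathcal{R}(\boldsymbol{A})$ always, we have $\boldsymbol{r}_0\in\mathcal{R}(\boldsymbol{A})=\mathcal{R}(\boldsymbol{A}\boldsymbol{G})$, so the preconditioned system is consistent. The classical result then yields iterates $\boldsymbol{y}_k$ produced without breakdown until $\boldsymbol{A}\boldsymbol{G}\boldsymbol{y}_k=\boldsymbol{r}_0$, whence $\boldsymbol{A}\boldsymbol{x}_k=\boldsymbol{b}$ and $\boldsymbol{x}_k$ is an exact (hence LS) solution. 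For the ``only if'' direction, note that $\mathcal{R}(\boldsymbol{A}\boldsymbol{G})\subseteq\mathcal{R}(\boldsymbol{A})$ holds automatically, so a failure of the equality yields some $\boldsymbol{b}\in\mathcal{R}(\boldsymbol{A})\setminus\mathcal{R}(\boldsymbol{A}\boldsymbol{G})$. With $\boldsymbol{x}_0=\boldsymbol{0}$ the preconditioned system is inconsistent, GMRES can only drive its residual to a strictly positive minimum, and therefore $\boldsymbol{A}\boldsymbol{x}_k\ne\boldsymbol{b}$; since the original system is consistent, every LS solution has zero residual, so $\boldsymbol{x}_k$ fails to be an LS solution and necessity follows.

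For the pseudoinverse assertion I take the natural choice $\boldsymbol{x}_0=\boldsymbol{0}$, which confines every iterate to $\boldsymbol{x}_k=\boldsymbol{G}\boldsymbol{y}_k\in\mathcal{R}(\boldsymbol{G})=\mathcal{R}(\boldsymbol{A}^{T})=\mathcal{N}(\boldsymbol{A})^{\perp}$; since $\boldsymbol{x}_{\mathrm{PI}}=\boldsymbol{A}^{+}\boldsymbol{b}$ is the \emph{unique} LS solution lying in $\mathcal{N}(\boldsymbol{A})^{\perp}$, we conclude $\boldsymbol{x}_{\text{LS}}=\boldsymbol{x}_{\mathrm{PI}}$. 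The main technical obstacle is a careful verification of the non-breakdown dichotomy in the right-preconditioned Arnoldi setup used here: one must show that the next Arnoldi vector vanishes exactly when the constructed Krylov subspace becomes $\boldsymbol{A}\boldsymbol{G}$-invariant, and then invoke $\mathcal{N}(\boldsymbol{A}\boldsymbol{G})\cap\mathcal{R}(\boldsymbol{A}\boldsymbol{G})=\{\boldsymbol{0}\}$ to deduce that such invariance forces the current residual into $\mathcal{N}(\boldsymbol{A}\boldsymbol{G})\cap\mathcal{R}(\boldsymbol{A}\boldsymbol{G})=\{\boldsymbol{0}\}$, so genuine convergence, rather than stagnation, occurs at that step.
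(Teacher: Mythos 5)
Your proposal is correct and follows essentially the same route as the paper: it reduces right-preconditioned GMRES to unpreconditioned GMRES on $\boldsymbol{A}\boldsymbol{G}\boldsymbol{y}\approx\boldsymbol{r}_{0}$, invokes the Brown--Walker non-breakdown characterization under $\mathcal{N}(\boldsymbol{A}\boldsymbol{G})\cap\mathcal{R}(\boldsymbol{A}\boldsymbol{G})=\{\boldsymbol{0}\}$, ties convergence to an LS solution to the range condition $\mathcal{R}(\boldsymbol{A})=\mathcal{R}(\boldsymbol{A}\boldsymbol{G})$, and derives the pseudoinverse claim from $\boldsymbol{x}_{k}\in\mathcal{R}(\boldsymbol{G})=\mathcal{R}(\boldsymbol{A}^{T})=\mathcal{N}(\boldsymbol{A})^{\perp}$. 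You supply somewhat more detail than the paper on the necessity direction, and your explicit restriction to $\boldsymbol{x}_{0}=\boldsymbol{0}$ (or, more generally, $\boldsymbol{x}_{0}\in\mathcal{R}(\boldsymbol{A}^{T})$) for the pseudoinverse assertion is a point the paper's own proof glosses over but acknowledges in the remark following Theorem~\ref{thm:optimal-precond}.
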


\begin{proof}
For the unpreconditioned GMRES, the theorem is equivalent to Theorem~2.6
in \cite{brown1997gmres}. For preconditioned GMRES, the breakdown-free
property follows from applying \cite[Theorem 2.6]{brown1997gmres}
to the preconditioned KSP. It is easy to show that at convergence,
$\boldsymbol{x}_{\text{LS}}=\boldsymbol{G}\boldsymbol{y}_{*}$ is
an LS solution if and only if $\mathcal{R}(\boldsymbol{A})=\mathcal{R}(\boldsymbol{A}\boldsymbol{G})$
\cite[Theorem 3.1]{hayami2010gmres}. Finally, if $\mathcal{R}(\boldsymbol{G})=\mathcal{R}(\boldsymbol{A}^{T})$,
then $\boldsymbol{x}_{\text{LS}}\perp\mathcal{N}(\boldsymbol{A})$,
so $\boldsymbol{x}_{\text{LS}}$ is the pseudoinverse solution.
\end{proof}
For unpreconditioned KSP, Theorem~\ref{thm:consistent-range-asymmetric}
is mathematically equivalent to an earlier result of Freund and Hochbruck
\cite[Theorem 2.6]{freund1994use}, as noted by Brown and Walker in
\cite{brown1997gmres}. 

\begin{remark}If $\mathcal{N}(\boldsymbol{A})\cap\mathcal{R}(\boldsymbol{A})=\{\boldsymbol{0}\}$,
$\boldsymbol{A}$ is said to have index 1. Recall that the \emph{index}
of $\boldsymbol{A}$, i.e., $\text{idx}(\boldsymbol{A})$, is the
smallest number $k$ such that $\mathcal{N}(\boldsymbol{A}^{k+1})=\mathcal{N}(\boldsymbol{A}^{k})$,
or the maximum dimension of the Jordan blocks associated with the
zero eigenvalue of $\boldsymbol{A}$. Note that for any matrix $\boldsymbol{A}$
with a defective eigenvalue $\lambda$, $\text{idx}(\boldsymbol{A}-\lambda\boldsymbol{I})>1$.
Hence, non-index-1 matrices are rare but not uncommon. Some methods,
such as DGMRES \cite{sidi2001dgmres}, were developed to compute the
so-called Drazin-inverse solution \cite[p. 356]{Golub13MC} for such
matrices. However, the index is in general not preserved under preconditioning,
so an effective preconditioner can convert a non-index-1 matrix to
an index-1 matrix as we will show in section~\ref{subsec:epsilon-accurate-AGI}.
Hence, we do not need other special treatments for high-index matrices.\end{remark}

For inconsistent systems, however, KSP with a fixed preconditioner
has some fundamental difficulties due to the following theorem. 
\begin{theorem}
\label{thm:GMRES-range-symmetric}GMRES with RPO $\boldsymbol{G}$
does not break down until finding an LS solution $\boldsymbol{x}_{\text{LS}}$
of \eqref{eq:least-squares-linear-system} for all $\boldsymbol{b}\in\mathbb{R}^{n}$
and $\boldsymbol{x}_{0}\in\mathbb{R}^{n}$ if and only if $\boldsymbol{A}\boldsymbol{G}$
is range symmetric and $\mathcal{R}(\boldsymbol{A})=\mathcal{R}(\boldsymbol{A}\boldsymbol{G})$.
Furthermore, $\boldsymbol{x}_{\text{LS}}$ is the pseudoinverse solution
if $\mathcal{R}(\boldsymbol{G})=\mathcal{R}(\boldsymbol{A}^{T})$.
\end{theorem}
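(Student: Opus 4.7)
The plan is to mirror the proof strategy of Theorem~\ref{thm:consistent-range-asymmetric}, but to replace its consistent-systems input with the stronger Brown--Walker characterization that handles arbitrary right-hand sides (cf.\ \cite[Theorem~2.4]{brown1997gmres}). First, I would view the right-preconditioned iteration as unpreconditioned GMRES applied to $\boldsymbol{A}\boldsymbol{G}\boldsymbol{y}\approx\boldsymbol{b}$. Brown and Walker showed that unpreconditioned GMRES finds an LS solution without breakdown for every $\boldsymbol{b}$ and every initial guess if and only if the coefficient matrix is range symmetric. Specializing this to $\boldsymbol{A}\boldsymbol{G}$ immediately supplies both the necessity and the sufficiency of the range symmetry of $\boldsymbol{A}\boldsymbol{G}$ for the breakdown-free property of the preconditioned iteration.

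Next, I would link the LS solution $\boldsymbol{y}_{*}$ of the preconditioned system to an LS solution $\boldsymbol{x}_{\text{LS}}=\boldsymbol{G}\boldsymbol{y}_{*}$ of the original problem via the condition $\mathcal{R}(\boldsymbol{A})=\mathcal{R}(\boldsymbol{A}\boldsymbol{G})$, reusing the argument of \cite[Theorem~3.1]{hayami2010gmres}. For sufficiency, $\min_{\boldsymbol{y}}\|\boldsymbol{b}-\boldsymbol{A}\boldsymbol{G}\boldsymbol{y}\|$ equals the distance from $\boldsymbol{b}$ to $\mathcal{R}(\boldsymbol{A}\boldsymbol{G})=\mathcal{R}(\boldsymbol{A})$, which matches the minimum residual of the original LS problem. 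For necessity, any $\boldsymbol{b}\in\mathcal{R}(\boldsymbol{A})\setminus\mathcal{R}(\boldsymbol{A}\boldsymbol{G})$ would produce a preconditioned LS residual strictly larger than the original one, giving a contradiction.

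For the pseudoinverse claim, I would note that $\boldsymbol{x}_{\text{LS}}=\boldsymbol{G}\boldsymbol{y}_{*}\in\mathcal{R}(\boldsymbol{G})$; if $\mathcal{R}(\boldsymbol{G})=\mathcal{R}(\boldsymbol{A}^{T})$, then since $\mathcal{R}(\boldsymbol{A}^{T})=\mathcal{N}(\boldsymbol{A})^{\perp}$, we have $\boldsymbol{x}_{\text{LS}}\perp\mathcal{N}(\boldsymbol{A})$, which characterizes the unique minimum-norm LS solution, i.e., $\boldsymbol{x}_{\text{LS}}=\boldsymbol{A}^{+}\boldsymbol{b}$.

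The hardest part, I expect, is the necessity direction for range symmetry of $\boldsymbol{A}\boldsymbol{G}$: invoking Brown--Walker hides a nontrivial construction of a breakdown-inducing pair $(\boldsymbol{b},\boldsymbol{x}_{0})$ when $\boldsymbol{A}\boldsymbol{G}$ fails to be range symmetric. Verifying that their construction transfers verbatim with $\boldsymbol{A}\boldsymbol{G}$ playing the role of the bare coefficient matrix, and that the generalized Arnoldi vectors built from $\boldsymbol{A}\boldsymbol{G}$ faithfully realize the counterexample, is largely bookkeeping but is the technically most delicate step.
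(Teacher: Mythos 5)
Your proposal is correct and follows essentially the same route as the paper's own proof: reduce to unpreconditioned GMRES on $\boldsymbol{A}\boldsymbol{G}\boldsymbol{y}\approx\boldsymbol{b}$ and invoke Brown--Walker's Theorem~2.4 for the range-symmetry characterization, use the Hayami--Yin--Ito argument to transfer the LS property back to $\boldsymbol{x}_{\text{LS}}=\boldsymbol{G}\boldsymbol{y}_{*}$ via $\mathcal{R}(\boldsymbol{A})=\mathcal{R}(\boldsymbol{A}\boldsymbol{G})$, and conclude the pseudoinverse claim from $\boldsymbol{x}_{\text{LS}}\in\mathcal{R}(\boldsymbol{G})=\mathcal{R}(\boldsymbol{A}^{T})\perp\mathcal{N}(\boldsymbol{A})$. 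The extra detail you supply on the necessity/sufficiency of the range condition is a harmless unpacking of the citation the paper makes.
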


\begin{proof}
For unpreconditioned GMRES, the theorem is equivalent to the well-known
result due to Brown and Walker \cite[Theorem 2.4]{brown1997gmres}.
With $\boldsymbol{G}$ as the right preconditioner, Theorem 2.4 of
\cite{brown1997gmres} implies that the preconditioned KSP finds an
LS solution $\boldsymbol{y}_{*}$ of the preconditioned system if
and only if $\boldsymbol{A}\boldsymbol{G}$ is range symmetric. $\boldsymbol{x}_{*}=\boldsymbol{G}\boldsymbol{y}_{*}$
is an LS solution of the original system if and only if $\mathcal{R}(\boldsymbol{A})=\mathcal{R}(\boldsymbol{A}\boldsymbol{G})$.
The last part follows the same argument as for Theorem~\ref{thm:consistent-range-asymmetric}.
\end{proof}
Theorem~\ref{thm:consistent-range-asymmetric} implies that a fixed
preconditioner for inconsistent systems must preserve the range of
$\boldsymbol{A}$ (i.e., $\mathcal{R}(\boldsymbol{A})=\mathcal{R}(\boldsymbol{A}\boldsymbol{G})$)
and at the same time ensure range symmetry of $\boldsymbol{A}\boldsymbol{G}$.
While preserving the range is relatively easy, it is difficult to
construct an RPO $\boldsymbol{G}$ for a singular $\boldsymbol{A}$
that satisfies both constraints simultaneously. One exception is $\boldsymbol{G}=\boldsymbol{A}^{T}$
as in \cite{gould2017state,hayami2010gmres,morikuni2015convergence},
but $\boldsymbol{G}=\boldsymbol{A}^{T}$ would square the condition
number and in turn slow down the convergence of the ``preconditioned''
system $\boldsymbol{A}\boldsymbol{A}^{T}\boldsymbol{y}=\boldsymbol{b}$;
see section~\ref{subsec:Review-of-related} for more discussions.
Hence, it is very challenging, if not impossible, to construct effective
fixed preconditioners that are robust for general inconsistent systems.
In practice, preconditioned GMRES often breaks down for some systems
before reaching the desired precision (e.g., near machine precision
\cite{reichel2005bfgmres}); see, e.g., Figures~\ref{fig:bcsstk35}
and \ref{fig:shyy161}. Theorem~\ref{thm:consistent-range-asymmetric}
can be viewed as a reinterpretation of the analysis of AB-GMRES in
\cite{hayami2010gmres}, where $\boldsymbol{B}$ is analogous to $\boldsymbol{G}$
in Theorem~\ref{thm:GMRES-range-symmetric}. Hayami, Yin, and Ito
\cite{hayami2010gmres} analyzed the convergence conditions on $\boldsymbol{B}$
based on $\boldsymbol{A}^{T}$. In this work, we derive the optimality
conditions for $\boldsymbol{G}$ based on generalized inverses of
$\boldsymbol{A}$.

The challenge associated with KSP with fixed preconditioners can be
mitigated by the FKSP with a variable preconditioner, due to the following
theorem.
\begin{theorem}
\label{thm:FGMRES-var-precond} If FGMRES with variable preconditioners
$\boldsymbol{\mathcal{G}}=[\mathcal{G}_{1},\mathcal{G}_{2},\dots]$
does not break down until step $k+1$, and if the projection of $\boldsymbol{b}$
onto $\mathcal{R}(\boldsymbol{A})$, denoted by $\text{proj}_{\mathcal{R}(\boldsymbol{A})}\boldsymbol{b}$,
is in $\sum_{i=1}^{k}\mathcal{R}(\boldsymbol{A}\mathcal{G}_{i})$,
then it finds an LS solution of \eqref{eq:least-squares-linear-system}
for an initial guess $\boldsymbol{x}_{0}\in\mathbb{R}^{n}$. Conversely,
if FGMRES finds an LS solution of \eqref{eq:least-squares-linear-system},
then $\sum_{i=1}^{k}\mathcal{R}(\boldsymbol{A}\mathcal{G}_{i})$ must
contain $\text{proj}_{\mathcal{R}(\boldsymbol{A})}\boldsymbol{b}$.
\end{theorem}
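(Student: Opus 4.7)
The plan is to combine the residual-minimization property of FGMRES with the orthogonal decomposition of $\boldsymbol{b}$ relative to $\mathcal{R}(\boldsymbol{A})$, and to use the no-breakdown hypothesis to link the Arnoldi-reachable space $\mathcal{R}(\boldsymbol{A}\boldsymbol{Z}_k)$ to the operator-range condition.

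I would begin by splitting $\boldsymbol{b} = \boldsymbol{s} + \boldsymbol{t}$ with $\boldsymbol{s} = \text{proj}_{\mathcal{R}(\boldsymbol{A})}\boldsymbol{b}$ and $\boldsymbol{t} \in \mathcal{R}(\boldsymbol{A})^{\perp}$, so the LS residual norm is $\|\boldsymbol{t}\|$ and any LS solution satisfies $\boldsymbol{A}\boldsymbol{x}^{*} = \boldsymbol{s}$. Under the no-breakdown assumption through step $k+1$, the flexible Arnoldi process yields $\boldsymbol{V}_{k+1}$ with orthonormal columns $\boldsymbol{q}_1,\dots,\boldsymbol{q}_{k+1}$ and $\boldsymbol{Z}_k = [\mathcal{G}_1(\boldsymbol{q}_1),\dots,\mathcal{G}_k(\boldsymbol{q}_k)]$ satisfying $\boldsymbol{A}\boldsymbol{Z}_k = \boldsymbol{V}_{k+1}\bar{\boldsymbol{H}}_k$, where $\bar{\boldsymbol{H}}_k$ has full column rank $k$. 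Since $\mathcal{R}(\boldsymbol{A}\boldsymbol{Z}_k) \subseteq \mathcal{R}(\boldsymbol{A})$ is orthogonal to $\boldsymbol{t}$, the Pythagorean identity
\begin{equation*}
\|\boldsymbol{b} - \boldsymbol{A}\boldsymbol{x}_0 - \boldsymbol{A}\boldsymbol{Z}_k\boldsymbol{y}\|^{2} = \|(\boldsymbol{s}-\boldsymbol{A}\boldsymbol{x}_0) - \boldsymbol{A}\boldsymbol{Z}_k\boldsymbol{y}\|^{2} + \|\boldsymbol{t}\|^{2}
\end{equation*}
holds for all $\boldsymbol{y}$, so FGMRES attains the LS residual at step $k$ if and only if $\boldsymbol{s} - \boldsymbol{A}\boldsymbol{x}_0 \in \mathcal{R}(\boldsymbol{A}\boldsymbol{Z}_k)$, in which case $\boldsymbol{r}_k = \boldsymbol{t}$ and $\boldsymbol{x}_k$ is an LS solution.

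The converse is then immediate: if $\boldsymbol{x}_k$ is an LS solution, $\boldsymbol{s}-\boldsymbol{A}\boldsymbol{x}_0 = \boldsymbol{A}\boldsymbol{Z}_k\boldsymbol{y}_k \in \mathcal{R}(\boldsymbol{A}\boldsymbol{Z}_k)$, and since each column $\boldsymbol{A}\mathcal{G}_i(\boldsymbol{q}_i)$ of $\boldsymbol{A}\boldsymbol{Z}_k$ lies in $\mathcal{R}(\boldsymbol{A}\mathcal{G}_i)$, we obtain $\boldsymbol{s}-\boldsymbol{A}\boldsymbol{x}_0 \in \sum_{i=1}^{k}\mathcal{R}(\boldsymbol{A}\mathcal{G}_i)$; specializing to $\boldsymbol{x}_0 = \boldsymbol{0}$ gives $\text{proj}_{\mathcal{R}(\boldsymbol{A})}\boldsymbol{b} \in \sum_{i=1}^{k}\mathcal{R}(\boldsymbol{A}\mathcal{G}_i)$. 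For the forward direction I would use the no-breakdown hypothesis to argue that $\mathcal{R}(\boldsymbol{A}\boldsymbol{Z}_k)$ saturates the $\mathcal{R}(\boldsymbol{A})$-part of the FKSP. Concretely, the decomposition $\mathcal{K}_{k+1} = \text{span}(\boldsymbol{r}_0) \oplus \mathcal{R}(\boldsymbol{A}\boldsymbol{Z}_k)$ together with $\boldsymbol{r}_0 = (\boldsymbol{s}-\boldsymbol{A}\boldsymbol{x}_0) + \boldsymbol{t}$ implies, in the inconsistent case ($\boldsymbol{t}\neq\boldsymbol{0}$), that $\mathcal{K}_{k+1}\cap\mathcal{R}(\boldsymbol{A}) = \mathcal{R}(\boldsymbol{A}\boldsymbol{Z}_k)$: any element of $\mathcal{K}_{k+1}$ in $\mathcal{R}(\boldsymbol{A})$ must have zero $\boldsymbol{r}_0$-coefficient, since otherwise the $\boldsymbol{t}$-component would remain. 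Combined with the hypothesis $\boldsymbol{s}\in\sum_{i=1}^{k}\mathcal{R}(\boldsymbol{A}\mathcal{G}_i)$ and the saturation of the preconditioner ranges at happy breakdown, this places $\boldsymbol{s}-\boldsymbol{A}\boldsymbol{x}_0$ inside $\mathcal{R}(\boldsymbol{A}\boldsymbol{Z}_k)$, so $\boldsymbol{x}_k$ is an LS solution; the consistent case ($\boldsymbol{t}=\boldsymbol{0}$) reduces to the standard happy-breakdown argument of Saad's Proposition 9.3.

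The chief obstacle is the forward direction: bridging the abstract range condition $\boldsymbol{s}\in\sum_{i=1}^{k}\mathcal{R}(\boldsymbol{A}\mathcal{G}_i)$, which is about the full ranges of the preconditioning operators, to the operational condition $\boldsymbol{s}-\boldsymbol{A}\boldsymbol{x}_0\in\mathcal{R}(\boldsymbol{A}\boldsymbol{Z}_k)$, which involves only the Arnoldi-selected vectors $\mathcal{G}_i(\boldsymbol{q}_i)$. The bridge rests on interpreting no-breakdown through step $k+1$ as saturation of the reachable subspace inside $\mathcal{K}_{k+1}$, together with the orthogonal structure exploited in the key identity; getting this step rigorously without additional hypotheses (beyond no-breakdown) is the most delicate part of the argument and requires a careful case split on the consistency of the system.
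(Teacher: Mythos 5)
Your route is the same as the paper's: both proofs reduce the claim to whether $\text{proj}_{\mathcal{R}(\boldsymbol{A})}\boldsymbol{b}-\boldsymbol{A}\boldsymbol{x}_{0}$ lies in $\mathcal{R}(\boldsymbol{A}\boldsymbol{Z}_{k})$, and your converse (each column $\boldsymbol{A}\mathcal{G}_{i}(\boldsymbol{q}_{i})$ of $\boldsymbol{A}\boldsymbol{Z}_{k}$ lies in $\mathcal{R}(\boldsymbol{A}\mathcal{G}_{i})$, hence $\mathcal{R}(\boldsymbol{A}\boldsymbol{Z}_{k})\subseteq\sum_{i=1}^{k}\mathcal{R}(\boldsymbol{A}\mathcal{G}_{i})$) is exactly the paper's argument. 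Your explicit orthogonal split $\boldsymbol{b}=\boldsymbol{s}+\boldsymbol{t}$ and the Pythagorean identity are a more careful rendering of a step the paper leaves implicit, and they are correct.

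The genuine gap is the forward direction, and the bridge you propose does not close it. You want to pass from $\boldsymbol{s}\in\sum_{i=1}^{k}\mathcal{R}(\boldsymbol{A}\mathcal{G}_{i})$ to $\boldsymbol{s}-\boldsymbol{A}\boldsymbol{x}_{0}\in\mathcal{R}(\boldsymbol{A}\boldsymbol{Z}_{k})$ via $\mathcal{K}_{k+1}\cap\mathcal{R}(\boldsymbol{A})=\mathcal{R}(\boldsymbol{A}\boldsymbol{Z}_{k})$, but that identity only helps if you already know $\boldsymbol{s}\in\mathcal{K}_{k+1}$, and the hypothesis gives no such thing: the only available relation is $\mathcal{K}_{k+1}\subseteq\sum_{i}\mathcal{R}(\boldsymbol{A}\mathcal{G}_{i})+\text{span}\{\boldsymbol{r}_{0}\}$, which runs the wrong way. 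Each $\mathcal{R}(\boldsymbol{A}\mathcal{G}_{i})$ may have dimension up to $\text{rank}(\boldsymbol{A})$ while FGMRES samples only the single vector $\boldsymbol{A}\mathcal{G}_{i}(\boldsymbol{q}_{i})$ from it, so $\mathcal{R}(\boldsymbol{A}\boldsymbol{Z}_{k})$ is in general a proper, at most $k$-dimensional, subspace of $\sum_{i}\mathcal{R}(\boldsymbol{A}\mathcal{G}_{i})$. Concretely, with $\boldsymbol{A}=\diag(1,1,0)$, $\boldsymbol{b}=\boldsymbol{e}_{1}+\boldsymbol{e}_{3}$, $k=1$, $\boldsymbol{x}_{0}=\boldsymbol{0}$, and $\boldsymbol{G}_{1}=\boldsymbol{e}_{1}\boldsymbol{e}_{2}^{T}$, one has $\text{proj}_{\mathcal{R}(\boldsymbol{A})}\boldsymbol{b}=\boldsymbol{e}_{1}\in\mathcal{R}(\boldsymbol{A}\boldsymbol{G}_{1})$ and $\boldsymbol{A}\boldsymbol{G}_{1}\boldsymbol{b}=\boldsymbol{0}$, so the flexible space becomes invariant after one step yet the step-one iterate has residual $\Vert\boldsymbol{b}\Vert>\Vert\boldsymbol{t}\Vert$ and is not an LS solution; whatever additional mechanism is needed, it must involve the sampled directions $\boldsymbol{A}\mathcal{G}_{i}(\boldsymbol{q}_{i})$ rather than the full operator ranges. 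To be fair, the paper's own proof makes exactly the same leap in one sentence (asserting that the hypothesis yields an LS minimizer over $\mathcal{R}(\boldsymbol{A}\boldsymbol{Z}_{k})$ without justification), so you have reproduced, and at least explicitly flagged, the weak point of the published argument rather than introduced a new one. But flagging the gap is not the same as closing it, and as written neither your case split on consistency nor the happy-breakdown condition supplies the missing implication.
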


\begin{proof}
By assumption, FGMRES does not break down until step $k+1$, $\mathcal{K}_{k}(\boldsymbol{A},\boldsymbol{b},\boldsymbol{\mathcal{G}}_{k})=\mathcal{K}_{k+1}(\boldsymbol{A},\boldsymbol{b},\boldsymbol{\mathcal{G}}_{k})\subseteq\sum_{i=1}^{k}\mathcal{R}(\boldsymbol{A}\mathcal{G}_{i})+\{\boldsymbol{b}\}$.
Let $\boldsymbol{Q}_{k}\boldsymbol{R}_{k}$ denote the QR factorization
of $\boldsymbol{K}_{k}$, and let $\boldsymbol{Z}_{k}$ be composed
of $\boldsymbol{z}_{j}=\mathcal{\mathcal{G}}_{j}(\boldsymbol{q}_{j})$,
where $\boldsymbol{q}_{j}$ is the $j$th column of $\boldsymbol{Q}_{k}$
for $j=1,2,\dots,k$. If $\text{proj}_{\mathcal{R}(\boldsymbol{A})}\boldsymbol{b}\in\sum_{i=1}^{k}\mathcal{R}(\boldsymbol{A}\mathcal{G}_{i})$,
then there is an LS solution $\boldsymbol{y}_{k}\in\arg\min_{k}\left\Vert \boldsymbol{b}-\boldsymbol{A}\boldsymbol{Z}_{k}\boldsymbol{y}\right\Vert _{2}$,
so \textbf{$\boldsymbol{x}_{k}=\boldsymbol{Z}_{k}\boldsymbol{y}_{k}\in\arg\min_{\boldsymbol{x}}\left\Vert \boldsymbol{b}-\boldsymbol{A}\boldsymbol{x}\right\Vert _{2}$}.
Conversely, if $\text{proj}_{\mathcal{R}(\boldsymbol{A})}\boldsymbol{b}\not\in\sum_{i=1}^{k}\mathcal{R}(\boldsymbol{A}\mathcal{G}_{i})$,
then $\boldsymbol{x}_{k}=\boldsymbol{Z}_{k}\boldsymbol{y}_{k}$ cannot
be an LS solution of $\min_{\boldsymbol{x}}\left\Vert \boldsymbol{b}-\boldsymbol{A}\boldsymbol{x}\right\Vert _{2}$.
\end{proof}
Theorem~\ref{thm:FGMRES-var-precond} generalizes Proposition 9.3
in \cite{saad2003iterative} on FGMRES for nonsingular systems. 
For unpreconditioned GMRES, a variant of Theorem~\ref{thm:FGMRES-var-precond}
was proven by Calvetti, Lewis, and Reichel \cite[Theorem 2.2]{calvetti2000gmres},
which assumed $k=\text{rank}(\boldsymbol{A})$. From a practical viewpoint,
Theorem~\ref{thm:FGMRES-var-precond} indicates a potential advantage
of variable preconditioning over fixed preconditioning: As long as
$\mathcal{R}(\boldsymbol{A}\mathcal{G}_{i})$ can continue introducing
a new component into the range space until $\sum_{i=1}^{k}\mathcal{R}(\boldsymbol{A}\mathcal{G}_{i})$
contains $\text{proj}_{\mathcal{R}(\boldsymbol{A})}\boldsymbol{b}$,
 then FGMRES would not break down, even if $\mathcal{R}(\boldsymbol{A}\mathcal{G}_{i})\neq\mathcal{R}(\boldsymbol{A})$
or $\boldsymbol{A}\mathcal{G}_{i}$ is not range symmetric in some
steps. The new components could be introduced, for example, via random
perturbations (similar to those in BFGMRES \cite{reichel2005bfgmres}),
or via inner iterations (similar to the original FGMRES for nonsingular
systems \cite{saad1993flexible} and its variants in AB-GMRES for
singular systems \cite{morikuni2015convergence}). In addition, restarts
with random perturbations or with different variable preconditioners
in the inner iterations can also increase the probability of introducing
new components. An alternative approach to overcome the potential
breakdown of GMRES for inconsistent systems is to convert inconsistent
systems into consistent ones \cite{brown1997gmres}. In this work,
we take the latter approach, by introducing HIF with iterative refinement
as a variable preconditioner to compute the null-space vectors, as
we will describe in section~\ref{sec:Application-to-null-space}.

The preceding analysis assumed exact arithmetic. It is well known
that with floating-point arithmetic, the asymptotic convergence rate
of GMRES (and other KSP methods) for diagonalizable nonsingular systems
depends on the ratio of the extreme eigenvalues and the conditioning
of the eigenvectors of $\boldsymbol{A}$ \cite{saad2003iterative}.
If $\boldsymbol{A}$ is nearly symmetric and positive definite, the
number of iterations is $\mathcal{O}(\sqrt{\kappa(\boldsymbol{A})})$;
however, if $\boldsymbol{A}$ is indefinite, it may require up to
$\mathcal{O}(\kappa(\boldsymbol{A}))$ iterations \cite[Section 9.2]{benzi2005numerical}.
These analyses can be generalized to KSP with a fixed preconditioner
(see, e.g., \cite{morikuni2015convergence}), using the following
notion of the condition number.
\begin{definition}
\label{def:condition-number} Given a potentially singular matrix
$\boldsymbol{A}\in\mathbb{R}^{n\times n}$, the \emph{2-norm condition
number }of $\boldsymbol{A}$ is the ratio between the largest and
the smallest nonzero singular values of $\boldsymbol{A}$, i.e., $\kappa_{2}(\boldsymbol{A})=\sigma_{1}(\boldsymbol{A})/\sigma_{r}(\boldsymbol{A})$,
where $r=\text{\emph{rank}}(\boldsymbol{A})$.
\end{definition}

Note that 
\begin{equation}
\sigma_{r}(\boldsymbol{A})=\min_{\boldsymbol{u}\in\mathcal{R}(\boldsymbol{A})\backslash\{\boldsymbol{0}\}}\frac{\left\Vert \boldsymbol{u^{T}}\boldsymbol{A}\right\Vert }{\left\Vert \boldsymbol{u}\right\Vert }=\min_{\boldsymbol{v}\in\mathcal{R}(\boldsymbol{A}^{T})\backslash\{\boldsymbol{0}\}}\frac{\left\Vert \boldsymbol{A}\boldsymbol{v}\right\Vert }{\left\Vert \boldsymbol{v}\right\Vert }.\label{eq:rth-singular-value}
\end{equation}
The asymptotic convergence analysis for GMRES assumes unlimited dimensions
of the KSP. Due to limited memory resources, restarted GMRES (or FGMRES)
is typically used for large-scale systems. Restart can lead to complications,
such as stagnation \cite{saad2003iterative}. Our goal in this work
is to develop near-optimal preconditioners, so that restarted FGMRES
can converge quickly, ideally before restart is invoked, even for
systems with millions of unknowns.

\subsection{\label{subsec:Review-of-related}Review of related iterative methods
and preconditioners}

Iterative methods and preconditioners for RDLS systems have received
significant attention in recent years \cite{Choi11MINRES,fong2011lsmr,gould2017state,hayami2010gmres,morikuni2018gmres}.
We review a few related methods in increasing generality. We will
also briefly analyze them based on the theory summarized in section~\ref{subsec:Convergence-of-right-preconditio}.

\subsubsection{\label{subsec:Symmetric-singular-systems}Symmetric singular systems}

MINRES \cite{Paige75MINRES} can solve symmetric indefinite systems,
including singular but consistent systems. For inconsistent symmetric
systems, Choi, Paige, and Saunders introduced MINRES-QLP \cite{Choi11MINRES},
which incrementally estimates the extreme singular values and corresponding
singular vectors of the Hessenberg matrix to detect ill-conditioning
and filter out the null-space component. Preconditioning for MINRES-type
methods is subtle because symmetric preconditioning alters the range
in general, leading to a different weighted-least-squares (WLS) solution
for inconsistent systems. Although symmetric preconditioning may be
posed as right preconditioning, such as in \cite[Theorem 1]{sugihara2020right},
a true LS solution still cannot be obtained for inconsistent systems
\cite[Proposition 1]{sugihara2020right}.

\subsubsection{\label{subsec:Asymmetric-singular-systems}Asymmetric singular systems}

For nearly singular systems, Brown and Walker \cite{brown1997gmres}
proposed to terminate GMRES when ill-conditioning is detected by monitoring
the condition number of the Krylov matrix $\boldsymbol{K}$. By taking
advantage of its QR factorization, the condition number can be estimated
efficiently in an incremental fashion \cite{brown1997gmres}. However,
this strategy is not robust enough to achieve near machine precision
(see \cite[Example 4.1]{reichel2005bfgmres}). In \cite{calvetti2000gmres},
Calvetti, Lewis, and Reichel proposed to improve the conditioning
of the KSP by using $\mathcal{K}(\boldsymbol{A},\boldsymbol{A}\boldsymbol{b})$
in place of $\mathcal{K}(\boldsymbol{A},\boldsymbol{b})$, and they
referred to the approach as RRGMRES. For range-symmetric systems,
RRGMRES is guaranteed to converge to the pseudoinverse solution. For
range-asymmetry systems, however, RRGMRES does not appear to have
significant advantages over GMRES \cite{morikuni2018gmres}. In \cite{reichel2005bfgmres},
Reichel and Ye introduced BFGMRES to alleviate potential breakdowns
of GMRES by randomly perturbing the KSP when ill-conditioning is detected.

\subsubsection{\label{subsec:General-rank-deficient-least}General rank-deficient
least-squares systems}

The analysis in section~\ref{subsec:Convergence-of-right-preconditio}
applies to structurally singular systems, so they also apply to $m\times n$
RDLS systems. However, an iterative solver may potentially take advantage
of $m<n$ or $m>n$ for the so-called underdetermined or overdetermined
cases, respectively, especially if $\boldsymbol{A}$ has full rank.
Examples include AB- and BA-GMRES in \cite{hayami2010gmres}, which
established a general framework with fixed preconditioners. Hayami
et al. \cite{hayami2010gmres} developed a preconditioner based on
robust incomplete factorization (RIF) of Benzi and T\r{u}ma \cite{benzi2003robust_positive_definite,benzi2003robust}.
They showed that RIF-preconditioned BA- and AB-GMRES significantly
outperformed preconditioned LSQR for some full-rank systems. However,
RIF cannot guarantee range symmetry of the preconditioned system,
so Theorem~\ref{thm:GMRES-range-symmetric} implies that RIF-preconditioned
AB- or BA-GMRES cannot be robust for general systems if high precision
of the solution is required.

To achieve robustness in AB- and BA-GMRES, it appears that the most
practical choice for $\boldsymbol{B}$ is $\boldsymbol{A}^{T}$ \cite{gould2017state,morikuni2015convergence}.
 With such a choice, the KSPs in AB- and BA-GMRES without preconditioning
are essentially $\mathcal{K}_{k}(\boldsymbol{A}\boldsymbol{A}^{T},\boldsymbol{b})$
and $\mathcal{K}_{k}(\boldsymbol{A}^{T}\boldsymbol{A},\boldsymbol{A}^{T}\boldsymbol{b})$,
respectively. The latter KSP is the same as those used in CGLS \cite{bjorck1996numerical}
(also known as CGNR \cite{saad2003iterative} or CGN \cite{trefethen1997numerical}),
LSQR \cite{paige1982lsqr}, and LSMR \cite{fong2011lsmr}. Mathematically,
CGLS and LSQR are equivalent to applying the conjugate gradient method
\cite{hestenes1952methods} to the normal equation, and LSMR is equivalent
to applying MINRES to the normal equation. These methods can all compute
the pseudoinverse solution. Unfortunately, $\boldsymbol{A}^{T}\boldsymbol{A}$
(or $\boldsymbol{A}\boldsymbol{A}^{T}$) squares the condition number
of $\boldsymbol{A}$ and, in turn, may square the number of iterations
in the worst case. 

The preconditioning of CGLS-type methods has attracted significant
attention. Example preconditioners include incomplete QR \cite{saad1988preconditioning,jennings1984incomplete},
RIF \cite{benzi2003robust,benzi2003robust_positive_definite}, and
inner iterations based on SSOR \cite{morikuni2013inner,morikuni2015convergence}.
 Like RIF and incomplete QR, our proposed HIF preconditioner is also
based on incomplete factorization. However, HIF preconditions $\boldsymbol{A}$
using an approximate generalized inverse (AGI) in FGMRES, which avoids
squaring the condition numbers in the first place. 

\section{\label{sec:Optimal-preconditioning}Optimality and near-optimality
conditions of preconditioners}

In this section, we analyze the conditions for right-preconditioned
GMRES to converge rapidly to an LS solution, ideally in just one iteration.
From section~\ref{subsec:Convergence-of-right-preconditio}, an intuitive
choice of such an optimal preconditioner is $\boldsymbol{G}=\boldsymbol{A}^{+}$,
which enables GMRES to converge to a pseudoinverse solution in one
iteration. However, $\boldsymbol{A}^{+}$ is not the unique optimal
preconditioner. For example, if $\boldsymbol{G}=\boldsymbol{P}_{1:r}\boldsymbol{R}_{1:r,1:r}^{-1}\left(\boldsymbol{Q}_{1:r}\right)^{T}$
as in \eqref{eq:least-squares-qrcp}, then $\boldsymbol{A}\boldsymbol{G}=\boldsymbol{A}\boldsymbol{A}^{+}$,
even though $\boldsymbol{G}\neq\boldsymbol{A}^{+}$. It is desirable
to identify the complete class of (near) optimal preconditioners.

\subsection{\label{subsec:Generalized-inverses}Preliminary: generalized inverses}

The pseudoinverse $\boldsymbol{A}^{+}$ of $\boldsymbol{A}\in\mathbb{R}^{m\times n}$,
introduced by Moore in 1920 \cite{moore1920reciprocal}, is a particular
case of generalized inverses of $\boldsymbol{A}$ \cite{rao1972generalized}.
\begin{definition}
\label{def:generalized-inverse}\cite[Definitions 2.2]{rao1972generalized}
Given a potentially rank-deficient $\boldsymbol{A}\in\mathbb{R}^{m\times n}$,
$\boldsymbol{A}^{g}$ is a \emph{generalized inverse} of $\boldsymbol{A}$
if and only if $\boldsymbol{A}\boldsymbol{A}^{g}\boldsymbol{A}=\boldsymbol{A}$.
\end{definition}

As an example, for $\boldsymbol{A}=\begin{bmatrix}\boldsymbol{I} & \boldsymbol{E}\\
 & \boldsymbol{0}
\end{bmatrix}$, $\boldsymbol{A}^{g}=\begin{bmatrix}\boldsymbol{I} & \boldsymbol{F}\\
 & \boldsymbol{C}
\end{bmatrix}$ is a generalized inverse of $\boldsymbol{A}$ for any $\boldsymbol{F}$
and $\boldsymbol{C}$ with compatible dimensions. $\boldsymbol{A}^{g}$
is also referred to as a $\{1\}$-inverse \cite{ben2003generalized}
in that it enforces only the first of the four Penrose conditions
\cite{penrose1955generalized}. An alternative definition of $\boldsymbol{A}^{g}$
(e.g., \cite[Definitions 2.3]{rao1972generalized}\footnote{The first part of Lemma~\ref{lem:pseudo-inv-range-preservation}
is equivalent to the first part of Definition 2.3 in \cite{rao1972generalized}.
However, the second part of \cite[Definition 2.3]{rao1972generalized}
should have read $\mathcal{N}(\boldsymbol{A}^{g}\boldsymbol{A})=\mathcal{N}(\boldsymbol{A})$
instead of $\mathcal{R}(\boldsymbol{A}^{g}\boldsymbol{A})=\mathcal{R}(\boldsymbol{A})$.
In general, $\mathcal{R}(\boldsymbol{A}^{g}\boldsymbol{A})\neq\mathcal{R}(\boldsymbol{A})$;
otherwise, $\mathcal{N}(\boldsymbol{A}^{g}\boldsymbol{A})=\mathcal{N}(\boldsymbol{A})$
would have implied $\mathcal{R}(\boldsymbol{A})\cap\mathcal{N}(\boldsymbol{A})=\{\boldsymbol{0}\}$.}) is given by the following lemma.
\begin{lemma}
\label{lem:pseudo-inv-range-preservation} $\boldsymbol{A}^{g}$ is
a generalized inverse of $\boldsymbol{A}$ if and only if $\boldsymbol{A}\boldsymbol{A}^{g}$
is idempotent (i.e., $\left(\boldsymbol{A}\boldsymbol{A}^{g}\right)^{2}=\boldsymbol{A}\boldsymbol{A}^{g}$)
and $\mathcal{R}(\boldsymbol{A}\boldsymbol{A}^{g})=\mathcal{R}(\boldsymbol{A})$,
or $\boldsymbol{A}^{g}\boldsymbol{A}$ is idempotent and $\mathcal{N}(\boldsymbol{A}^{g}\boldsymbol{A})=\mathcal{N}(\boldsymbol{A})$.
\end{lemma}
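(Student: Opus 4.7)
The plan is to prove the lemma as a pair of biconditionals, since the two characterizations (via $\boldsymbol{A}\boldsymbol{A}^{g}$ and via $\boldsymbol{A}^{g}\boldsymbol{A}$) are dual to each other and can be handled by entirely parallel arguments. I would first dispatch the forward direction starting from the defining identity $\boldsymbol{A}\boldsymbol{A}^{g}\boldsymbol{A}=\boldsymbol{A}$, then handle each reverse direction separately.

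For the forward direction, assume $\boldsymbol{A}\boldsymbol{A}^{g}\boldsymbol{A}=\boldsymbol{A}$. Idempotency of $\boldsymbol{A}\boldsymbol{A}^{g}$ is immediate by post-multiplying both sides by $\boldsymbol{A}^{g}$, since $(\boldsymbol{A}\boldsymbol{A}^{g})(\boldsymbol{A}\boldsymbol{A}^{g})=(\boldsymbol{A}\boldsymbol{A}^{g}\boldsymbol{A})\boldsymbol{A}^{g}=\boldsymbol{A}\boldsymbol{A}^{g}$, and idempotency of $\boldsymbol{A}^{g}\boldsymbol{A}$ follows symmetrically by pre-multiplying by $\boldsymbol{A}^{g}$. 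The inclusion $\mathcal{R}(\boldsymbol{A}\boldsymbol{A}^{g})\subseteq\mathcal{R}(\boldsymbol{A})$ is trivial; conversely, any $\boldsymbol{A}\boldsymbol{x}\in\mathcal{R}(\boldsymbol{A})$ equals $\boldsymbol{A}\boldsymbol{A}^{g}(\boldsymbol{A}\boldsymbol{x})\in\mathcal{R}(\boldsymbol{A}\boldsymbol{A}^{g})$ by the defining identity. Dually, $\mathcal{N}(\boldsymbol{A})\subseteq\mathcal{N}(\boldsymbol{A}^{g}\boldsymbol{A})$ is trivial, and if $\boldsymbol{A}^{g}\boldsymbol{A}\boldsymbol{x}=\boldsymbol{0}$, then $\boldsymbol{A}\boldsymbol{x}=\boldsymbol{A}\boldsymbol{A}^{g}\boldsymbol{A}\boldsymbol{x}=\boldsymbol{0}$, so $\mathcal{N}(\boldsymbol{A}^{g}\boldsymbol{A})\subseteq\mathcal{N}(\boldsymbol{A})$.

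For the reverse direction of the first characterization, assume $\boldsymbol{A}\boldsymbol{A}^{g}$ is idempotent and $\mathcal{R}(\boldsymbol{A}\boldsymbol{A}^{g})=\mathcal{R}(\boldsymbol{A})$. Then $\boldsymbol{A}\boldsymbol{A}^{g}$ is a (generally oblique) projector, so it acts as the identity on its range. For any $\boldsymbol{x}$, $\boldsymbol{A}\boldsymbol{x}\in\mathcal{R}(\boldsymbol{A})=\mathcal{R}(\boldsymbol{A}\boldsymbol{A}^{g})$, hence $\boldsymbol{A}\boldsymbol{A}^{g}(\boldsymbol{A}\boldsymbol{x})=\boldsymbol{A}\boldsymbol{x}$; since this holds for every $\boldsymbol{x}$, $\boldsymbol{A}\boldsymbol{A}^{g}\boldsymbol{A}=\boldsymbol{A}$. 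For the reverse direction of the second characterization, assume $\boldsymbol{A}^{g}\boldsymbol{A}$ is idempotent and $\mathcal{N}(\boldsymbol{A}^{g}\boldsymbol{A})=\mathcal{N}(\boldsymbol{A})$. Idempotency gives $\boldsymbol{A}^{g}\boldsymbol{A}(\boldsymbol{I}-\boldsymbol{A}^{g}\boldsymbol{A})=\boldsymbol{0}$, so $\mathcal{R}(\boldsymbol{I}-\boldsymbol{A}^{g}\boldsymbol{A})\subseteq\mathcal{N}(\boldsymbol{A}^{g}\boldsymbol{A})=\mathcal{N}(\boldsymbol{A})$. Applying $\boldsymbol{A}$ annihilates this range, yielding $\boldsymbol{A}(\boldsymbol{I}-\boldsymbol{A}^{g}\boldsymbol{A})=\boldsymbol{0}$, i.e., $\boldsymbol{A}\boldsymbol{A}^{g}\boldsymbol{A}=\boldsymbol{A}$.

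The proof is essentially mechanical once the two sides are set up in parallel, so I do not anticipate a genuine obstacle. The only subtlety worth flagging is the asymmetry between the range condition (paired with $\boldsymbol{A}\boldsymbol{A}^{g}$) and the null-space condition (paired with $\boldsymbol{A}^{g}\boldsymbol{A}$), which the footnote in the excerpt explicitly warns about; swapping these naively would fail because $\mathcal{R}(\boldsymbol{A}^{g}\boldsymbol{A})\neq\mathcal{R}(\boldsymbol{A})$ in general. I would make sure the write-up presents the two dual arguments side by side so the correct pairing is transparent, and keep the reasoning one-line per step to highlight that nothing more than the defining identity and projector algebra is used.
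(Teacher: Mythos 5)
Your proof is correct, and most of it mirrors the paper's: the forward direction (idempotency by multiplying the defining identity by $\boldsymbol{A}^{g}$ on one side, plus the two trivial inclusions and their converses) and the converse of the range characterization (an idempotent acts as the identity on its range) are the same arguments the authors use or say are easy to adapt. The one place you genuinely diverge is the converse of the null-space characterization. The paper handles it by transposing: idempotency of $\boldsymbol{A}^{g}\boldsymbol{A}$ makes $\boldsymbol{A}^{T}(\boldsymbol{A}^{g})^{T}$ the identity on its range, the hypothesis $\mathcal{N}(\boldsymbol{A}^{g}\boldsymbol{A})=\mathcal{N}(\boldsymbol{A})$ is converted into $\mathcal{R}(\boldsymbol{A}^{T}(\boldsymbol{A}^{g})^{T})=\mathcal{R}(\boldsymbol{A}^{T})$, and the range argument is rerun on the transposed matrices to get $\boldsymbol{A}^{T}(\boldsymbol{A}^{g})^{T}\boldsymbol{A}^{T}=\boldsymbol{A}^{T}$. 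You instead argue directly: idempotency gives $\mathcal{R}(\boldsymbol{I}-\boldsymbol{A}^{g}\boldsymbol{A})\subseteq\mathcal{N}(\boldsymbol{A}^{g}\boldsymbol{A})=\mathcal{N}(\boldsymbol{A})$, so applying $\boldsymbol{A}$ annihilates $\boldsymbol{I}-\boldsymbol{A}^{g}\boldsymbol{A}$ and yields $\boldsymbol{A}\boldsymbol{A}^{g}\boldsymbol{A}=\boldsymbol{A}$. Your route is slightly more elementary and self-contained (no appeal to duality under transposition, no implicit conversion of a null-space equality into a range equality of transposes), and it makes the two converse directions look genuinely parallel rather than one being a reflection of the other; the paper's route has the virtue of making the duality with the first characterization explicit, which is the point its accompanying footnote is trying to drive home. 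Either write-up is acceptable, and your flagged caveat about not swapping the range and null-space conditions between $\boldsymbol{A}\boldsymbol{A}^{g}$ and $\boldsymbol{A}^{g}\boldsymbol{A}$ is exactly the subtlety the authors' footnote addresses.
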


\begin{proof}
We prove only the second part, since it differs from \cite[Definitions 2.3]{rao1972generalized},
and it is easy to adapt the argument to prove the first part. If $\boldsymbol{A}\boldsymbol{A}^{g}\boldsymbol{A}=\boldsymbol{A}$,
then $\left(\boldsymbol{A}^{g}\boldsymbol{A}\right)^{2}=\boldsymbol{A}^{g}\left(\boldsymbol{A}\boldsymbol{A}^{g}\boldsymbol{A}\right)=\boldsymbol{A}^{g}\boldsymbol{A}$.
Furthermore, $\mathcal{N}(\boldsymbol{A})=\mathcal{N}(\boldsymbol{A}\boldsymbol{A}^{g}\boldsymbol{A})\supseteq\mathcal{\mathcal{N}}(\boldsymbol{A}^{g}\boldsymbol{A})\supseteq\mathcal{\mathcal{N}}(\boldsymbol{A})$,
so $\mathcal{\mathcal{N}}(\boldsymbol{A}^{g}\boldsymbol{A})=\mathcal{\mathcal{N}}(\boldsymbol{A})$.
Conversely, if $\boldsymbol{A}^{g}\boldsymbol{A}$ is idempotent,
$\boldsymbol{A}^{T}\left(\boldsymbol{A}^{g}\right)^{T}\boldsymbol{u}=\boldsymbol{u}$
for all $\boldsymbol{u}\in\mathcal{R}\left(\boldsymbol{A}^{T}\left(\boldsymbol{A}^{g}\right)^{T}\right)$.
Under the condition of $\mathcal{N}(\boldsymbol{A}^{g}\boldsymbol{A})=\mathcal{N}(\boldsymbol{A})$,
$\mathcal{R}(\boldsymbol{A}^{T}\left(\boldsymbol{A}^{g}\right)^{T})=\mathcal{R}(\boldsymbol{A}^{T})$.
Hence, $\boldsymbol{A}^{T}\left(\boldsymbol{A}^{g}\right)^{T}\boldsymbol{A}^{T}\boldsymbol{v}=\boldsymbol{A}^{T}\boldsymbol{v}$
for all $\boldsymbol{v}\in\mathbb{R}^{m}$ and, in turn, $\boldsymbol{A}\boldsymbol{A}^{g}\boldsymbol{A}=\boldsymbol{A}$.
\end{proof}
By definition, both $\boldsymbol{A}\boldsymbol{A}^{+}$ and $\boldsymbol{A}\boldsymbol{A}^{g}$
are projectors. However, $\boldsymbol{A}\boldsymbol{A}^{+}$ is an
orthogonal projector, but $\boldsymbol{A}\boldsymbol{A}^{g}$ is oblique
in general. Continuing the earlier example, $\boldsymbol{A}\boldsymbol{A}^{g}=\begin{bmatrix}\boldsymbol{I} & \boldsymbol{F}+\boldsymbol{E}\boldsymbol{C}\\
 & \boldsymbol{0}
\end{bmatrix}$, which is an oblique projector. The following properties will also
turn out to be useful.
\begin{lemma}
\label{lem:empty-intersect} If $\boldsymbol{A}^{g}$ is a generalized
inverse of $\boldsymbol{A}$, then $\mathcal{R}(\boldsymbol{A}^{g})\cap\mathcal{\mathcal{N}}(\boldsymbol{A})=\{\boldsymbol{0}\}$
and $\mathcal{R}(\boldsymbol{A})\cap\mathcal{\mathcal{N}}(\boldsymbol{A}^{g})=\{\boldsymbol{0}\}$.
\end{lemma}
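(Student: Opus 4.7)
The plan is to leverage the defining identity $\boldsymbol{A}\boldsymbol{A}^g\boldsymbol{A}=\boldsymbol{A}$ together with Lemma~\ref{lem:pseudo-inv-range-preservation}, which characterizes $\boldsymbol{A}\boldsymbol{A}^g$ as an idempotent (oblique) projector with $\mathcal{R}(\boldsymbol{A}\boldsymbol{A}^g)=\mathcal{R}(\boldsymbol{A})$, and dually $\boldsymbol{A}^g\boldsymbol{A}$ as an idempotent projector with $\mathcal{N}(\boldsymbol{A}^g\boldsymbol{A})=\mathcal{N}(\boldsymbol{A})$. These projector identities decompose the ambient space into complementary pieces, and I will use them to rule out nontrivial overlap between the ranges and kernels in question.

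The second identity, $\mathcal{R}(\boldsymbol{A})\cap\mathcal{N}(\boldsymbol{A}^g)=\{\boldsymbol{0}\}$, is essentially a one-line verification. Given a candidate $\boldsymbol{y}=\boldsymbol{A}\boldsymbol{w}$ with $\boldsymbol{A}^g\boldsymbol{y}=\boldsymbol{0}$, I right-multiply the defining identity by $\boldsymbol{w}$ and regroup to obtain $\boldsymbol{y}=\boldsymbol{A}\boldsymbol{w}=\boldsymbol{A}\boldsymbol{A}^g\boldsymbol{A}\boldsymbol{w}=\boldsymbol{A}\boldsymbol{A}^g\boldsymbol{y}=\boldsymbol{A}\cdot\boldsymbol{0}=\boldsymbol{0}$. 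This is the easier direction, and I would present it first to set the template for the symmetric argument.

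The first identity, $\mathcal{R}(\boldsymbol{A}^g)\cap\mathcal{N}(\boldsymbol{A})=\{\boldsymbol{0}\}$, is the mirror statement and is where the real work lies. My intended symmetric attack starts from $\boldsymbol{x}=\boldsymbol{A}^g\boldsymbol{z}$ with $\boldsymbol{A}\boldsymbol{x}=\boldsymbol{0}$ and aims to conclude $\boldsymbol{x}=\boldsymbol{A}^g\boldsymbol{A}\boldsymbol{x}=\boldsymbol{0}$. The main obstacle is that pushing $\boldsymbol{A}^g\boldsymbol{A}$ past $\boldsymbol{A}^g\boldsymbol{z}$ to recover $\boldsymbol{x}$ needs the reflexive property $\boldsymbol{A}^g\boldsymbol{A}\boldsymbol{A}^g=\boldsymbol{A}^g$, which is not implied by the $\{1\}$-inverse definition alone. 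I plan to handle this by invoking the standard reduction: replacing $\boldsymbol{A}^g$ by its reflexive refinement $\boldsymbol{A}^g\boldsymbol{A}\boldsymbol{A}^g$ preserves the defining relation as well as $\mathcal{R}(\boldsymbol{A}^g\boldsymbol{A})$, so the conclusion transfers back. Equivalently, the direct-sum decomposition $\mathbb{R}^n=\mathcal{R}(\boldsymbol{A}^g\boldsymbol{A})\oplus\mathcal{N}(\boldsymbol{A})$ furnished by Lemma~\ref{lem:pseudo-inv-range-preservation} closes the argument once $\mathcal{R}(\boldsymbol{A}^g)$ is identified with $\mathcal{R}(\boldsymbol{A}^g\boldsymbol{A})$ under the reflexive convention.
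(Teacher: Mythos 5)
Your treatment of the second identity is correct and complete: for $\boldsymbol{y}=\boldsymbol{A}\boldsymbol{w}$ with $\boldsymbol{A}^{g}\boldsymbol{y}=\boldsymbol{0}$, the chain $\boldsymbol{y}=\boldsymbol{A}\boldsymbol{A}^{g}\boldsymbol{A}\boldsymbol{w}=\boldsymbol{A}\boldsymbol{A}^{g}\boldsymbol{y}=\boldsymbol{0}$ needs nothing beyond the $\{1\}$-inverse axiom, and it is cleaner than the paper's ``the proof is similar'' remark. The problem is the first identity. You have correctly located the obstacle---the axiom $\boldsymbol{A}\boldsymbol{A}^{g}\boldsymbol{A}=\boldsymbol{A}$ alone does not let you recover $\boldsymbol{x}=\boldsymbol{A}^{g}\boldsymbol{z}$ from $\boldsymbol{A}^{g}\boldsymbol{A}\boldsymbol{x}$---but your repair does not close it. Replacing $\boldsymbol{A}^{g}$ by its reflexive refinement $\tilde{\boldsymbol{A}}^{g}=\boldsymbol{A}^{g}\boldsymbol{A}\boldsymbol{A}^{g}$ only \emph{shrinks} the range, $\mathcal{R}(\tilde{\boldsymbol{A}}^{g})\subseteq\mathcal{R}(\boldsymbol{A}^{g})$, possibly strictly; proving that the smaller subspace misses $\mathcal{N}(\boldsymbol{A})$ says nothing about the larger one, so the conclusion does not ``transfer back.'' Indeed it cannot: take $\boldsymbol{A}=\diag(1,0)$ and $\boldsymbol{A}^{g}=\boldsymbol{I}_{2}$ (the paper's own example with $\boldsymbol{E}=\boldsymbol{F}=\boldsymbol{0}$ and $\boldsymbol{C}=\boldsymbol{I}$). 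Then $\boldsymbol{A}\boldsymbol{A}^{g}\boldsymbol{A}=\boldsymbol{A}$, yet $\mathcal{R}(\boldsymbol{A}^{g})=\mathbb{R}^{2}$ meets $\mathcal{N}(\boldsymbol{A})=\operatorname{span}\{\boldsymbol{e}_{2}\}$ nontrivially, so the first identity is genuinely false for a non-reflexive $\{1\}$-inverse.

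For what it is worth, the paper's own proof of the first identity has the same hidden assumption: it claims that a nonzero vector in $\mathcal{R}(\boldsymbol{A}^{g})\cap\mathcal{N}(\boldsymbol{A})$ would force $\operatorname{rank}(\boldsymbol{A}\boldsymbol{A}^{g})<\operatorname{rank}(\boldsymbol{A})$, but rank--nullity applied to $\boldsymbol{A}$ restricted to $\mathcal{R}(\boldsymbol{A}^{g})$ gives $\operatorname{rank}(\boldsymbol{A}\boldsymbol{A}^{g})=\operatorname{rank}(\boldsymbol{A}^{g})-\dim\left(\mathcal{R}(\boldsymbol{A}^{g})\cap\mathcal{N}(\boldsymbol{A})\right)$, which together with $\operatorname{rank}(\boldsymbol{A}\boldsymbol{A}^{g})=\operatorname{rank}(\boldsymbol{A})$ (Lemma~\ref{lem:pseudo-inv-range-preservation}) shows the intersection is trivial \emph{if and only if} $\operatorname{rank}(\boldsymbol{A}^{g})=\operatorname{rank}(\boldsymbol{A})$, i.e., if and only if $\boldsymbol{A}^{g}$ is reflexive. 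So your instinct that reflexivity is the missing ingredient is exactly right; the honest resolution is to add it (or the equivalent rank condition) as a hypothesis for the first identity rather than to argue it away. This costs nothing downstream, since the operator $\boldsymbol{G}$ constructed in \eqref{eq:mlilu-pseudoinverse} satisfies $\boldsymbol{G}\boldsymbol{M}\boldsymbol{G}=\boldsymbol{G}$ by construction.
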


\begin{proof}
If there were $\text{\ensuremath{\boldsymbol{v}}}\in\mathcal{R}(\boldsymbol{A}^{g})\cap\mathcal{\mathcal{N}}(\boldsymbol{A})\backslash\{\boldsymbol{0}\}$,
then $\text{rank}(\boldsymbol{A}\boldsymbol{A}^{g})<\text{rank}(\boldsymbol{A})$,
which contradicts $\mathcal{R}(\boldsymbol{A}\boldsymbol{A}^{g})=\mathcal{R}(\boldsymbol{A})$.
The proof for $\mathcal{R}(\boldsymbol{A})\cap\mathcal{\mathcal{N}}(\boldsymbol{A}^{g})=\{\boldsymbol{0}\}$
is similar.
\end{proof}
\begin{proposition}
\label{prop:diagnalizability} If $\boldsymbol{A}^{g}$ is a generalized
inverse of $\boldsymbol{A}\in\mathbb{R}^{m\times n}$, then $\boldsymbol{A}\boldsymbol{A}^{g}$
is diagonalizable, and its eigenvalues are all zeros and ones. In
other words, there exists a nonsingular matrix $\boldsymbol{X}\in\mathbb{R}^{m\times m}$,
such that $\boldsymbol{A}\boldsymbol{A}^{g}=\boldsymbol{X}\begin{bmatrix}\boldsymbol{I}_{r}\\
 & \boldsymbol{0}
\end{bmatrix}\boldsymbol{X}^{-1}$, where $\boldsymbol{I}_{r}\in\mathbb{R}^{r\times r}$ with $r=\text{\emph{rank}}(\boldsymbol{A})$.
Conversely, if there exists a nonsingular $\boldsymbol{X}$ such that
$\boldsymbol{A}\boldsymbol{G}=\boldsymbol{X}\begin{bmatrix}\boldsymbol{I}_{r}\\
 & \boldsymbol{0}
\end{bmatrix}\boldsymbol{X}^{-1}$ for $r=\text{\emph{rank}}(\boldsymbol{A})$, then $\boldsymbol{G}$
is a generalized inverse of $\boldsymbol{A}$.
\end{proposition}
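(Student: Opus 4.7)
The plan is to reduce both implications directly to the alternative characterization of generalized inverses given by Lemma~\ref{lem:pseudo-inv-range-preservation}, together with the standard fact that idempotent matrices are diagonalizable with spectrum contained in $\{0,1\}$. Since $\boldsymbol{A}\boldsymbol{A}^{g}$ lives in $\mathbb{R}^{m\times m}$, the proposition is really a statement about the Jordan structure of an oblique projector of rank $r=\text{rank}(\boldsymbol{A})$.

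For the forward direction, I would first invoke Lemma~\ref{lem:pseudo-inv-range-preservation} to conclude that $\boldsymbol{A}\boldsymbol{A}^{g}$ is idempotent and that $\mathcal{R}(\boldsymbol{A}\boldsymbol{A}^{g})=\mathcal{R}(\boldsymbol{A})$. Because $(\boldsymbol{A}\boldsymbol{A}^{g})^{2}=\boldsymbol{A}\boldsymbol{A}^{g}$, its minimal polynomial divides $\lambda(\lambda-1)$, which has distinct roots, so $\boldsymbol{A}\boldsymbol{A}^{g}$ is diagonalizable with eigenvalues in $\{0,1\}$. The algebraic (and geometric) multiplicity of the eigenvalue $1$ equals $\text{rank}(\boldsymbol{A}\boldsymbol{A}^{g})=\dim\mathcal{R}(\boldsymbol{A}\boldsymbol{A}^{g})=\dim\mathcal{R}(\boldsymbol{A})=r$, while the eigenvalue $0$ contributes the remaining $m-r$ dimensions from $\mathcal{N}(\boldsymbol{A}\boldsymbol{A}^{g})$. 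Collecting a basis of $\mathcal{R}(\boldsymbol{A}\boldsymbol{A}^{g})$ first and a basis of $\mathcal{N}(\boldsymbol{A}\boldsymbol{A}^{g})$ second into the columns of a nonsingular $\boldsymbol{X}\in\mathbb{R}^{m\times m}$ yields the claimed block-diagonal form.

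For the converse, assume $\boldsymbol{A}\boldsymbol{G}=\boldsymbol{X}\,\text{diag}(\boldsymbol{I}_{r},\boldsymbol{0})\,\boldsymbol{X}^{-1}$ with $r=\text{rank}(\boldsymbol{A})$. Squaring gives $(\boldsymbol{A}\boldsymbol{G})^{2}=\boldsymbol{A}\boldsymbol{G}$, so $\boldsymbol{A}\boldsymbol{G}$ is idempotent. Moreover $\text{rank}(\boldsymbol{A}\boldsymbol{G})=r=\text{rank}(\boldsymbol{A})$, and since trivially $\mathcal{R}(\boldsymbol{A}\boldsymbol{G})\subseteq\mathcal{R}(\boldsymbol{A})$, the equality of dimensions forces $\mathcal{R}(\boldsymbol{A}\boldsymbol{G})=\mathcal{R}(\boldsymbol{A})$. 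Applying the first part of Lemma~\ref{lem:pseudo-inv-range-preservation} then yields $\boldsymbol{A}\boldsymbol{G}\boldsymbol{A}=\boldsymbol{A}$, i.e., $\boldsymbol{G}$ is a generalized inverse.

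There is no real obstacle here beyond keeping the bookkeeping straight: the crux is recognizing that the two hypotheses supplied by Lemma~\ref{lem:pseudo-inv-range-preservation}, namely idempotency plus range preservation, together completely determine the Jordan form of $\boldsymbol{A}\boldsymbol{A}^{g}$ up to similarity, and that conversely these two pieces of information can be read off from any such diagonalization. The only mildly delicate step is in the converse, where one must combine the inclusion $\mathcal{R}(\boldsymbol{A}\boldsymbol{G})\subseteq\mathcal{R}(\boldsymbol{A})$ with the rank hypothesis to upgrade rank equality to range equality; without this the form of $\boldsymbol{X}$ alone would only guarantee that $\boldsymbol{A}\boldsymbol{G}$ is \emph{some} rank-$r$ idempotent, not one whose range coincides with $\mathcal{R}(\boldsymbol{A})$.
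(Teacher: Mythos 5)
Your proof is correct and follows essentially the same route as the paper's: both directions reduce to Lemma~\ref{lem:pseudo-inv-range-preservation}, with diagonalizability obtained from idempotency (the paper cites the invariant subspaces $\mathcal{R}(\boldsymbol{A}\boldsymbol{A}^{g})$ and $\mathcal{N}(\boldsymbol{A}\boldsymbol{A}^{g})$ where you invoke the minimal polynomial, which is equivalent) and the converse handled by combining $\mathcal{R}(\boldsymbol{A}\boldsymbol{G})\subseteq\mathcal{R}(\boldsymbol{A})$ with the rank equality exactly as the paper does. No gaps.
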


\begin{proof}
$\boldsymbol{A}\boldsymbol{A}^{g}$ is idempotent due to Lemma~\ref{lem:pseudo-inv-range-preservation},
so its eigenvalues are ones and zeros. The invariant subspaces associated
with them are $\mathcal{R}(\boldsymbol{A}\boldsymbol{A}^{g})$ and
$\mathcal{N}(\boldsymbol{A}\boldsymbol{A}^{g})$, respectively. Hence,
$\boldsymbol{A}\boldsymbol{A}^{g}$ is diagonalizable. For the converse
direction, $\boldsymbol{A}\boldsymbol{G}$ is idempotent, and furthermore
$\mathcal{R}(\boldsymbol{A}\boldsymbol{G})=\mathcal{R}(\boldsymbol{A})$
because $\mathcal{R}(\boldsymbol{A}\boldsymbol{G})\subseteq\mathcal{R}(\boldsymbol{A})$
and $\text{rank}(\boldsymbol{A}\boldsymbol{G})=\text{rank}(\boldsymbol{A})=r$.
Hence, $\boldsymbol{G}$ is a generalized inverse due to Lemma~\ref{lem:pseudo-inv-range-preservation}.
\end{proof}
\begin{proposition}
\label{prop:transpose-gen-inv}If $\boldsymbol{A}^{g}$ is a generalized
inverse of $\boldsymbol{A}\in\mathbb{R}^{m\times n}$, then $\left(\boldsymbol{A}^{g}\right)^{T}$
is a generalized inverse of $\boldsymbol{A}^{T}$.
\end{proposition}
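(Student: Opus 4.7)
The proof is essentially a one-line computation starting directly from Definition~\ref{def:generalized-inverse}. My plan is to take the defining identity $\boldsymbol{A}\boldsymbol{A}^{g}\boldsymbol{A}=\boldsymbol{A}$ and apply the transpose operator to both sides. Using the fact that transposition reverses the order of products, this immediately yields
\begin{equation*}
\boldsymbol{A}^{T}\bigl(\boldsymbol{A}^{g}\bigr)^{T}\boldsymbol{A}^{T}=\boldsymbol{A}^{T},
\end{equation*}
which is precisely the condition in Definition~\ref{def:generalized-inverse} identifying $\bigl(\boldsymbol{A}^{g}\bigr)^{T}$ as a generalized inverse of $\boldsymbol{A}^{T}$.

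Because Definition~\ref{def:generalized-inverse} is stated for any $\boldsymbol{A}\in\mathbb{R}^{m\times n}$, no dimensional caveats are needed: if $\boldsymbol{A}^{g}\in\mathbb{R}^{n\times m}$, then $\bigl(\boldsymbol{A}^{g}\bigr)^{T}\in\mathbb{R}^{m\times n}$, which is the correct shape to be a generalized inverse of $\boldsymbol{A}^{T}\in\mathbb{R}^{n\times m}$. No appeal to Lemma~\ref{lem:pseudo-inv-range-preservation} or Proposition~\ref{prop:diagnalizability} is required, although one could alternatively verify the claim by transposing the idempotency identity $(\boldsymbol{A}\boldsymbol{A}^{g})^{2}=\boldsymbol{A}\boldsymbol{A}^{g}$ together with the range condition; this path is longer and strictly unnecessary.

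There is no real obstacle here; the only thing to be careful about is writing the transpose identity in the correct order so that the factors align with the $\{1\}$-inverse condition for $\boldsymbol{A}^{T}$ rather than for $\boldsymbol{A}$ itself. A two-sentence proof will suffice.
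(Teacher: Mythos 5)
Your proof is correct and is exactly the paper's argument: the authors likewise verify $\boldsymbol{A}^{T}\left(\boldsymbol{A}^{g}\right)^{T}\boldsymbol{A}^{T}=\left(\boldsymbol{A}\boldsymbol{A}^{g}\boldsymbol{A}\right)^{T}=\boldsymbol{A}^{T}$ in a single line. Nothing further is needed.
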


\begin{proof}
$\boldsymbol{A}^{T}\left(\boldsymbol{A}^{g}\right)^{T}\boldsymbol{A}^{T}=\left(\boldsymbol{A}\boldsymbol{A}^{g}\boldsymbol{A}\right)^{T}=\boldsymbol{A}^{T}$.
\end{proof}
The above results apply to general $m\times n$ real matrices and
are extensible to complex matrices. In the following, we apply them
only to the special case of real $n\times n$ matrices.

\subsection{\label{subsec:Optimality-condition-consistent}Optimality condition
for consistent systems}

For consistent systems, the generalized inverses and their scalar
multiples constitute the family of optimal RPOs.
\begin{theorem}
\label{thm:optimal-precond} Given $\boldsymbol{A}\in\mathbb{R}^{n\times n}$
and a generalized inverse $\boldsymbol{A}^{g}$, then GMRES with RPO
$\alpha\boldsymbol{A}^{g}$ with $\alpha\neq0$ converges to an LS
solution $\boldsymbol{x}_{\text{LS}}$ of \eqref{eq:least-squares-linear-system}
after one iteration for all $\boldsymbol{b}\in\mathcal{R}(\boldsymbol{A})$
and $\boldsymbol{x}_{0}\in\mathbb{R}^{n}$. Conversely, if GMRES with
RPO $\boldsymbol{G}$ converges to an LS solution in one iteration
for all $\boldsymbol{b}\in\mathcal{R}(\boldsymbol{A})$ and $\boldsymbol{x}_{0}\in\mathbb{R}^{n}$,
then $\boldsymbol{G}$ is a scalar multiple of a generalized inverse
of $\boldsymbol{A}$.
\end{theorem}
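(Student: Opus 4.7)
The plan is to translate the convergence-in-one-iteration condition into a simple algebraic statement about how $\boldsymbol{A}\boldsymbol{G}$ acts on the subspace $\mathcal{R}(\boldsymbol{A})$, and then use Definition~\ref{def:generalized-inverse} to recognize the resulting identity as characterizing generalized inverses.

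For the forward direction, fix $\boldsymbol{b}\in\mathcal{R}(\boldsymbol{A})$ and $\boldsymbol{x}_0\in\mathbb{R}^n$, and let $\boldsymbol{r}_0=\boldsymbol{b}-\boldsymbol{A}\boldsymbol{x}_0$. Since $\boldsymbol{A}\boldsymbol{x}_0,\boldsymbol{b}\in\mathcal{R}(\boldsymbol{A})$, we have $\boldsymbol{r}_0\in\mathcal{R}(\boldsymbol{A})$. The first step of right-preconditioned GMRES minimizes $\lVert\boldsymbol{r}_0-\gamma\boldsymbol{A}\boldsymbol{G}\boldsymbol{r}_0\rVert$ over $\gamma\in\mathbb{R}$ and sets $\boldsymbol{x}_1=\boldsymbol{x}_0+\gamma\boldsymbol{G}\boldsymbol{r}_0$. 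With $\boldsymbol{G}=\alpha\boldsymbol{A}^{g}$, Lemma~\ref{lem:pseudo-inv-range-preservation} says $\boldsymbol{A}\boldsymbol{A}^{g}$ is idempotent with range $\mathcal{R}(\boldsymbol{A})$, hence $\boldsymbol{A}\boldsymbol{A}^{g}$ restricts to the identity on $\mathcal{R}(\boldsymbol{A})$; in particular $\boldsymbol{A}\boldsymbol{G}\boldsymbol{r}_0=\alpha\boldsymbol{r}_0$. Choosing $\gamma=1/\alpha$ kills the residual, so $\boldsymbol{A}\boldsymbol{x}_1=\boldsymbol{b}$ and $\boldsymbol{x}_1$ is an (exact) LS solution after one iteration.

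For the converse, assume GMRES with RPO $\boldsymbol{G}$ finds an LS solution in one iteration for all admissible $\boldsymbol{b}$ and $\boldsymbol{x}_0$. Because $\boldsymbol{b}\in\mathcal{R}(\boldsymbol{A})$ makes the problem consistent, the LS residual must be exactly zero, so there exists $\gamma$ with $\gamma\boldsymbol{A}\boldsymbol{G}\boldsymbol{r}_0=\boldsymbol{r}_0$. Varying $\boldsymbol{b}$ and $\boldsymbol{x}_0$, the vector $\boldsymbol{r}_0$ ranges over all of $\mathcal{R}(\boldsymbol{A})$, so every nonzero $\boldsymbol{u}\in\mathcal{R}(\boldsymbol{A})$ is an eigenvector of $\boldsymbol{A}\boldsymbol{G}$. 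A standard linearity argument (if $\boldsymbol{A}\boldsymbol{G}\boldsymbol{u}=\lambda_u\boldsymbol{u}$, $\boldsymbol{A}\boldsymbol{G}\boldsymbol{v}=\lambda_v\boldsymbol{v}$, and $\boldsymbol{A}\boldsymbol{G}(\boldsymbol{u}+\boldsymbol{v})=\lambda_{u+v}(\boldsymbol{u}+\boldsymbol{v})$, then linear independence of $\boldsymbol{u},\boldsymbol{v}$ forces $\lambda_u=\lambda_v$; the dependent case is immediate) shows that the eigenvalue is a single scalar $\alpha$, so $\boldsymbol{A}\boldsymbol{G}\boldsymbol{u}=\alpha\boldsymbol{u}$ for every $\boldsymbol{u}\in\mathcal{R}(\boldsymbol{A})$. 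Taking $\boldsymbol{u}=\boldsymbol{A}\boldsymbol{v}$ for arbitrary $\boldsymbol{v}\in\mathbb{R}^n$ yields $\boldsymbol{A}\boldsymbol{G}\boldsymbol{A}=\alpha\boldsymbol{A}$. The possibility $\alpha=0$ is excluded, since $\boldsymbol{A}\neq\boldsymbol{0}$ gives a nonzero $\boldsymbol{r}_0\in\mathcal{R}(\boldsymbol{A})$ for which $\gamma\boldsymbol{A}\boldsymbol{G}\boldsymbol{r}_0=\boldsymbol{r}_0$ is unsolvable when $\boldsymbol{A}\boldsymbol{G}\boldsymbol{r}_0=\boldsymbol{0}$. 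Hence $\alpha\neq0$ and $\boldsymbol{A}(\boldsymbol{G}/\alpha)\boldsymbol{A}=\boldsymbol{A}$, so $\boldsymbol{G}/\alpha$ is a generalized inverse by Definition~\ref{def:generalized-inverse}, i.e., $\boldsymbol{G}$ is a scalar multiple of a generalized inverse.

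The forward direction is essentially a direct substitution once the idempotency lemma is invoked, so the main obstacle is the converse: one must extract a \emph{single} scalar $\alpha$ that is uniform over all of $\mathcal{R}(\boldsymbol{A})$ from a per-$\boldsymbol{r}_0$ eigenvalue condition, and then be careful to rule out the degenerate $\alpha=0$ case using the hypothesis $\boldsymbol{A}\neq\boldsymbol{0}$. Everything else is a short calculation.
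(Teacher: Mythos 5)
Your proof is correct, and it reaches the same conclusion by a noticeably more self-contained route than the paper. For the forward direction, the paper first invokes Theorem~\ref{thm:consistent-range-asymmetric} (a Brown--Walker-type breakdown result) to get convergence to an LS solution and then uses idempotency of $\boldsymbol{A}\boldsymbol{A}^{g}$ to argue that the Krylov subspace becomes invariant after one step; you instead compute directly that $\boldsymbol{A}\boldsymbol{A}^{g}$ acts as the identity on $\mathcal{R}(\boldsymbol{A})$ (equivalently, $\boldsymbol{A}\boldsymbol{A}^{g}\boldsymbol{A}=\boldsymbol{A}$ applied to $\boldsymbol{r}_{0}\in\mathcal{R}(\boldsymbol{A})$), so the one-dimensional minimization already drives the residual to exactly zero. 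This is more elementary and avoids the external convergence theorem entirely. For the converse, the paper asserts that $\boldsymbol{A}\boldsymbol{G}$ has a single nonzero eigenvalue $\lambda$ with $\frac{1}{\lambda}\boldsymbol{A}\boldsymbol{G}$ idempotent, cites Hayami et al.\ for $\mathcal{R}(\boldsymbol{A}\boldsymbol{G})=\mathcal{R}(\boldsymbol{A})$, and then appeals to Lemma~\ref{lem:pseudo-inv-range-preservation}; you extract the same information more explicitly --- every nonzero vector of $\mathcal{R}(\boldsymbol{A})$ is an eigenvector, the standard linearity argument forces a single eigenvalue $\alpha$, and substituting $\boldsymbol{u}=\boldsymbol{A}\boldsymbol{v}$ gives $\boldsymbol{A}\boldsymbol{G}\boldsymbol{A}=\alpha\boldsymbol{A}$, which is Definition~\ref{def:generalized-inverse} up to the scalar. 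Your version makes two points rigorous that the paper leaves implicit: why the per-residual scalar is uniform over $\mathcal{R}(\boldsymbol{A})$, and why $\alpha\neq0$ (using the standing assumption $\boldsymbol{A}\neq\boldsymbol{0}$). What the paper's route buys in exchange is reuse of machinery (Lemma~\ref{lem:pseudo-inv-range-preservation} and the range-preservation characterization) that it needs elsewhere anyway.
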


\begin{proof}
Since $\boldsymbol{A}\boldsymbol{A}^{g}$ is idempotent, $\mathcal{N}(\boldsymbol{A}\boldsymbol{A}^{g})\cap\mathcal{R}(\boldsymbol{A}\boldsymbol{A}^{g})=\{\boldsymbol{0}\}$.
From Theorem~\ref{thm:consistent-range-asymmetric}, the preconditioned
GMRES converges to an LS solution for all $\boldsymbol{b}\in\mathcal{R}(\boldsymbol{A})$
and $\boldsymbol{x}_{0}\in\mathbb{R}^{n}$. Furthermore, since $\left(\boldsymbol{A}\boldsymbol{A}^{g}\right)^{2}=\boldsymbol{A}\boldsymbol{A}^{g}$,
$\mathcal{K}_{k}(\boldsymbol{A}\boldsymbol{A}^{g},\boldsymbol{b})$
becomes invariant after one iteration. Hence, the preconditioned GMRES
terminates in one iteration. This invariance property also holds if
the RPO is $\alpha\boldsymbol{A}^{g}$ for $\alpha\neq0$.

Conversely, if GMRES with RPO $\boldsymbol{G}$ converges in one step
for all $\boldsymbol{b}\in\mathcal{R}(\boldsymbol{A})$ and $\boldsymbol{x}_{0}\in\mathbb{R}^{n}$,
$\boldsymbol{A}\boldsymbol{G}$ must have only one nonzero eigenvalue
$\lambda$, and $\nicefrac{1}{\lambda}\boldsymbol{A}\boldsymbol{G}$
is idempotent. Furthermore, the solution is always an LS solution
if and only if $\mathcal{R}(\boldsymbol{A}\boldsymbol{G})=\mathcal{R}(\boldsymbol{A})$
\cite[Theorem 3.1]{hayami2010gmres}. Since $\mathcal{R}(\alpha\boldsymbol{A}\boldsymbol{G})=\mathcal{R}(\boldsymbol{A}\boldsymbol{G})$
for $\alpha=\nicefrac{1}{\lambda}\neq0$, from Lemma~\ref{lem:pseudo-inv-range-preservation},
$\boldsymbol{G}$ is a scalar multiple of a generalized inverse of
$\boldsymbol{A}$.
\end{proof}
In Theorem~\ref{thm:optimal-precond}, the LS solution $\boldsymbol{x}_{\text{LS}}$
is equal to $\boldsymbol{A}^{g}\boldsymbol{b}$, which is a generalized-inverse
solution of \eqref{eq:least-squares-linear-system} with $\boldsymbol{b}\in\mathcal{R}(\boldsymbol{A})$.
However, $\boldsymbol{x}_{\text{LS}}$ is not the pseudoinverse solution
unless $\boldsymbol{x}_{\text{LS}}\in\mathcal{R}(\boldsymbol{A}^{T})$,
which is satisfied if $\mathcal{R}(\boldsymbol{A}^{g})=\mathcal{R}(\boldsymbol{A}^{T})$
and $\boldsymbol{x}_{0}\in\mathcal{R}(\boldsymbol{A}^{T})$. 

We assumed exact arithmetic in the above. With rounding errors, $\kappa(\boldsymbol{A}\boldsymbol{A}^{g})$
determines the convergence rate. Even though $\boldsymbol{A}\boldsymbol{A}^{g}$
is idempotent, $\kappa(\boldsymbol{A}\boldsymbol{A}^{g})$ may be
arbitrarily large. For $\boldsymbol{A}^{g}$ to be an ``optimal''
RPO, $\kappa(\boldsymbol{A}\boldsymbol{A}^{g})$ should be small (relative
to $1/\epsilon_{\text{mach}}$) for fast convergence. It can be shown
that $\kappa(\boldsymbol{A}\boldsymbol{A}^{g})\leq\kappa(\boldsymbol{X})$
(see Appendix~\ref{sec:Relationship-of-condition}), so it suffices
to control $\kappa(\boldsymbol{X})$ in practice.

\subsection{\label{subsec:Optimality-conditions-inconsistent}Near-optimality
conditions for inconsistent systems}

If the system is inconsistent, i.e., $\boldsymbol{b}\in\mathbb{R}^{n\times n}\backslash\mathcal{R}(\boldsymbol{A})$,
according to Theorem~\ref{thm:GMRES-range-symmetric}, GMRES with
$\boldsymbol{A}^{g}$ as the right preconditioner may not converge
to an LS solution for some $\boldsymbol{b}\in\mathbb{R}^{n}\backslash\mathcal{R}(\boldsymbol{A})$.
Instead, it converges to a WLS solution, as shown by the following
theorem.
\begin{theorem}
\label{thm:weighted-least-squares}Given $\boldsymbol{A}\in\mathbb{R}^{n\times n}$
and a generalized inverse $\boldsymbol{A}^{g}$, then GMRES with RPO
$\boldsymbol{A}^{g}$ converges to a WLS solution $\boldsymbol{x}_{*}$
of \eqref{eq:least-squares-linear-system} after one iteration for
all $\boldsymbol{b}\in\mathbb{R}^{n}$ and $\boldsymbol{x}_{0}\in\mathbb{R}^{n}$.
Conversely, if the KSP with RPO $\boldsymbol{G}$ is invariant in
one step for all $\boldsymbol{b}\in\mathbb{R}^{n}$ and $\boldsymbol{x}_{0}\in\mathbb{R}^{n}$,
then $\boldsymbol{G}$ is a scalar multiple of a generalized inverse
of $\boldsymbol{A}$.
\end{theorem}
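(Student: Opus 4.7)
For the forward direction, the plan is to exploit the idempotency of $\boldsymbol{A}\boldsymbol{A}^g$ (established via Lemma~\ref{lem:pseudo-inv-range-preservation}) to collapse the Krylov subspace. Since $(\boldsymbol{A}\boldsymbol{A}^g)^j = \boldsymbol{A}\boldsymbol{A}^g$ for every $j\ge 1$, the Krylov subspace $\mathcal{K}_k(\boldsymbol{A}\boldsymbol{A}^g,\boldsymbol{r}_0)$ is always contained in $\mathrm{span}\{\boldsymbol{r}_0,\boldsymbol{A}\boldsymbol{A}^g\boldsymbol{r}_0\}$, and its image under $\boldsymbol{A}\boldsymbol{A}^g$ is contained in the (at most one-dimensional) subspace $\mathrm{span}\{\boldsymbol{A}\boldsymbol{A}^g\boldsymbol{r}_0\}$ for every $k\ge 1$. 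Consequently, the GMRES residual minimization $\min_{\boldsymbol{y}\in \boldsymbol{y}_0+\mathcal{K}_k}\|\boldsymbol{b}-\boldsymbol{A}\boldsymbol{A}^g\boldsymbol{y}\|_2$ has the same optimum value for every $k\ge 1$, attained already at $k=1$ by a single-variable minimization $\boldsymbol{y}_1 = \boldsymbol{y}_0 + \alpha^*\boldsymbol{r}_0$. The resulting $\boldsymbol{x}_{*}=\boldsymbol{A}^g\boldsymbol{y}_1$ satisfies $\boldsymbol{A}\boldsymbol{x}_{*} = \boldsymbol{A}\boldsymbol{A}^g\boldsymbol{y}_1$, which lies on the affine line through $\boldsymbol{A}\boldsymbol{A}^g\boldsymbol{y}_0$ along the direction $\boldsymbol{A}\boldsymbol{A}^g\boldsymbol{r}_0 \in \mathcal{R}(\boldsymbol{A})$. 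Since $\boldsymbol{A}\boldsymbol{A}^g$ is an oblique (not orthogonal) projector onto $\mathcal{R}(\boldsymbol{A})$ along $\mathcal{N}(\boldsymbol{A}\boldsymbol{A}^g)$, $\boldsymbol{x}_{*}$ is a WLS rather than an LS solution.

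For the converse direction, the plan is to parallel the converse half of Theorem~\ref{thm:optimal-precond}. Invariance of the KSP in one step for every choice of $\boldsymbol{b},\boldsymbol{x}_0\in\mathbb{R}^n$ (equivalently, every $\boldsymbol{r}_0=\boldsymbol{b}-\boldsymbol{A}\boldsymbol{x}_0$) forces $(\boldsymbol{A}\boldsymbol{G})^2\boldsymbol{r}_0\in\mathrm{span}\{\boldsymbol{A}\boldsymbol{G}\boldsymbol{r}_0\}$ for every $\boldsymbol{r}_0\in\mathbb{R}^n$. A linearity argument (selecting $\boldsymbol{r}_0,\boldsymbol{s}_0$ with $\boldsymbol{A}\boldsymbol{G}\boldsymbol{r}_0,\boldsymbol{A}\boldsymbol{G}\boldsymbol{s}_0$ linearly independent, expanding on $\boldsymbol{r}_0+\boldsymbol{s}_0$, and matching coefficients) upgrades the pointwise scalar to a global scalar $\lambda$, yielding the operator identity $(\boldsymbol{A}\boldsymbol{G})^2=\lambda\boldsymbol{A}\boldsymbol{G}$. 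Setting $\alpha=1/\lambda$ makes $\boldsymbol{A}(\alpha\boldsymbol{G})$ idempotent, and combined with the range identity $\mathcal{R}(\boldsymbol{A}(\alpha\boldsymbol{G}))=\mathcal{R}(\boldsymbol{A}\boldsymbol{G})=\mathcal{R}(\boldsymbol{A})$ (the last equality inherited from the all-$\boldsymbol{b}$ quantification), Lemma~\ref{lem:pseudo-inv-range-preservation} identifies $\alpha\boldsymbol{G}$ as a generalized inverse of $\boldsymbol{A}$, so $\boldsymbol{G}$ is a scalar multiple thereof.

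The main obstacle will lie in the converse: the algebraic relation $(\boldsymbol{A}\boldsymbol{G})^2\propto\boldsymbol{A}\boldsymbol{G}$ derivable purely from one-step KSP invariance does not in itself imply the range identity $\mathcal{R}(\boldsymbol{A}\boldsymbol{G})=\mathcal{R}(\boldsymbol{A})$, and closing that gap requires delicate use of the universal quantifier over $\boldsymbol{b}\in\mathbb{R}^n$: if some direction of $\mathcal{R}(\boldsymbol{A})$ were missing from $\mathcal{R}(\boldsymbol{A}\boldsymbol{G})$, one could construct a $\boldsymbol{b}$ whose associated $\boldsymbol{r}_0$ has a Krylov sequence that does not stabilize in one step, contradicting the hypothesis. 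A secondary nuance in the forward direction is characterizing the precise weighting that makes the term ``WLS solution'' fit, which amounts to observing that $\boldsymbol{x}_{*}$ minimizes the Euclidean residual over a proper affine subset of $\mathcal{R}(\boldsymbol{A})$ determined by the oblique projector $\boldsymbol{A}\boldsymbol{A}^g$.
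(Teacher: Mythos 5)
Your forward direction follows the paper's route: idempotency of $\boldsymbol{A}\boldsymbol{A}^{g}$ collapses the preconditioned Krylov subspace so that the GMRES minimum is attained already at $k=1$, and that part is sound. Where it falls short is the actual claim of the theorem, namely that $\boldsymbol{x}_{*}$ is a \emph{WLS} solution of \eqref{eq:least-squares-linear-system}. You describe $\boldsymbol{x}_{*}$ as minimizing the \emph{Euclidean} residual over a restricted affine subset determined by the oblique projector; that is a constrained least-squares statement, not a weighted one, and it leaves the ``W'' unspecified. The paper closes this by exhibiting the weight explicitly: with $\boldsymbol{q}_{0}=\boldsymbol{b}/\Vert\boldsymbol{b}\Vert_{2}$ and $\boldsymbol{q}_{1}$ the normalized component of $\boldsymbol{A}\boldsymbol{A}^{g}\boldsymbol{b}$ orthogonal to $\boldsymbol{q}_{0}$, the one-step iterate minimizes the seminorm $\Vert\boldsymbol{A}\boldsymbol{x}-\boldsymbol{b}\Vert_{\boldsymbol{Q}_{1}\boldsymbol{Q}_{1}^{T}}$ as in \eqref{eq:wls-solution}. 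Without producing such a weight, the forward direction is not finished.

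The converse is where the proposal genuinely breaks. First, one-step invariance of the KSP gives only $(\boldsymbol{A}\boldsymbol{G})^{2}\boldsymbol{r}_{0}\in\mathrm{span}\{\boldsymbol{r}_{0},\boldsymbol{A}\boldsymbol{G}\boldsymbol{r}_{0}\}$, not $\mathrm{span}\{\boldsymbol{A}\boldsymbol{G}\boldsymbol{r}_{0}\}$; your matching-coefficients argument therefore yields at best $(\boldsymbol{A}\boldsymbol{G})^{2}=\alpha\boldsymbol{I}+\beta\boldsymbol{A}\boldsymbol{G}$, i.e., a degree-two minimal polynomial, which does not force $\boldsymbol{A}\boldsymbol{G}$ to be a scalar multiple of an idempotent (consider $\boldsymbol{A}\boldsymbol{G}=\boldsymbol{I}+\boldsymbol{N}$ with $\boldsymbol{N}^{2}=\boldsymbol{0}$). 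Second, and more importantly, the repair you propose for the range identity does not work: invariance alone cannot yield $\mathcal{R}(\boldsymbol{A}\boldsymbol{G})=\mathcal{R}(\boldsymbol{A})$. Take $\boldsymbol{A}=\boldsymbol{I}_{2}$ and $\boldsymbol{G}=\diag(1,0)$: then $\boldsymbol{A}\boldsymbol{G}$ is idempotent, so every Krylov sequence stabilizes after one step, yet $\mathcal{R}(\boldsymbol{A}\boldsymbol{G})\subsetneq\mathcal{R}(\boldsymbol{A})$ and $\boldsymbol{G}$ is not a scalar multiple of the (unique) generalized inverse of $\boldsymbol{I}_{2}$. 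Hence there is no $\boldsymbol{b}$ whose Krylov sequence fails to stabilize, contrary to the contradiction you hope to construct. The paper does not attempt such an argument: its converse simply reuses that of Theorem~\ref{thm:optimal-precond}, where the hypothesis that the one-step iterate is always an \emph{LS solution} supplies $\mathcal{R}(\boldsymbol{A}\boldsymbol{G})=\mathcal{R}(\boldsymbol{A})$ via \cite[Theorem 3.1]{hayami2010gmres} before Lemma~\ref{lem:pseudo-inv-range-preservation} is invoked. You correctly identified the missing range identity as the crux, but the universal quantifier over $\boldsymbol{b}$ does not supply it; an LS-type hypothesis on the computed solution is what actually does.
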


\begin{proof}
We focus on the forward direction since the proof of the converse
direction follows that of Theorem~\ref{thm:optimal-precond}. First,
let us assume that $\boldsymbol{x}_{0}=\boldsymbol{0}$. Since $\boldsymbol{A}\boldsymbol{A}^{g}\boldsymbol{A}=\boldsymbol{A}$,
$\mathcal{K}(\boldsymbol{A}\boldsymbol{A}^{g},\boldsymbol{b})$ becomes
invariant after one iteration. Hence, the preconditioned GMRES terminates
in one iteration, and the solution $y_{*}$ is the LS solution of
\begin{equation}
{\boldsymbol{Q}_{1}^{T}\boldsymbol{A}\boldsymbol{q}_{0}y_{*}\approx\boldsymbol{Q}_{1}^{T}\boldsymbol{b}=\begin{bmatrix}\left\Vert \boldsymbol{b}\right\Vert _{2}\\
\boldsymbol{q}_{1}^{T}\boldsymbol{b}
\end{bmatrix},}\label{eq:ls-solution-gmres}
\end{equation}
and $\boldsymbol{Q}_{1}$ is composed of columns
\begin{align}
\boldsymbol{q}_{0} & =\boldsymbol{b}/\Vert\boldsymbol{b}\Vert_{2},\label{eq:q0}\\
\boldsymbol{q}_{1} & =\boldsymbol{v}/\left\Vert \boldsymbol{v}\right\Vert _{2}\quad\text{with}\quad\boldsymbol{v}=\left(\boldsymbol{I}-\boldsymbol{q}_{0}\boldsymbol{q}_{0}^{T}\right)\boldsymbol{A}\boldsymbol{A}^{g}\boldsymbol{b}.\label{eq:q1}
\end{align}
Hence, $\boldsymbol{x}_{*}=\boldsymbol{q}_{0}y_{*}$ is the LS solution
of 
\begin{equation}
\boldsymbol{Q}_{1}^{T}\boldsymbol{A}\boldsymbol{x}_{*}\approx\boldsymbol{Q}_{1}^{T}\boldsymbol{b}\label{eq:ls-solution-x}
\end{equation}
or, equivalently, a WLS solution that minimizes a seminorm, namely,
\begin{equation}
\left\Vert \boldsymbol{A}\boldsymbol{x}_{*}-\boldsymbol{b}\right\Vert _{\boldsymbol{Q}_{1}\boldsymbol{Q}_{1}^{T}}=\sqrt{\left(\boldsymbol{A}\boldsymbol{x}_{*}-\boldsymbol{b}\right)^{T}\boldsymbol{Q}_{1}\boldsymbol{Q}_{1}^{T}\left(\boldsymbol{A}\boldsymbol{x}_{*}-\boldsymbol{b}\right)}.\label{eq:wls-solution}
\end{equation}
\end{proof}

In Theorem~\ref{thm:weighted-least-squares}, if $\boldsymbol{A}\boldsymbol{A}^{g}$
is not range symmetric, then GMRES would suffer from a hard breakdown
after one iteration from the viewpoint of a true LS solution. The
severity of this breakdown depends on how close $\boldsymbol{A}\boldsymbol{A}^{g}$
is to an orthogonal projector. 

\subsection{\label{subsec:epsilon-accurate-AGI}$\epsilon$-accurate approximate
generalized inverses}

It is impractical to compute an exact generalized inverse, so an approximation
is needed. The following definition establishes a criterion for an
accurate and stable approximation to a generalized inverse.
\begin{definition}
\label{def:epsilon-accurate}Given $\boldsymbol{A}\in\mathbb{R}^{n\times n}$,
$\boldsymbol{G}$ is an $\epsilon$\emph{-accurate} \emph{approximate
generalized inverse} (\emph{AGI}) if there exists $\boldsymbol{X}\in\mathbb{R}^{n\times n}$
as in Proposition~\ref{prop:diagnalizability} such that
\begin{equation}
\left\Vert \boldsymbol{X}^{-1}\boldsymbol{A}\boldsymbol{G}\boldsymbol{X}-\begin{bmatrix}\boldsymbol{I}_{r}\\
 & \boldsymbol{0}
\end{bmatrix}\right\Vert _{2}=\epsilon<1,\label{eq:norm-bound}
\end{equation}
where $\boldsymbol{I}_{r}\in\mathbb{R}^{r\times r}$ with $r=\text{\emph{rank}}(\boldsymbol{A})$.
A class of AGI is $\epsilon$\emph{-accurate} if $\epsilon$ tends
to 0 as its control parameters are tightened. $\boldsymbol{G}$ is
a \emph{stable }AGI if \emph{$\kappa(\boldsymbol{X})\leq C$ for some
$C\ll1/\epsilon_{\text{mach}}$}.
\end{definition}

In the definition, the ``control parameters'' refer to the drop
tolerance and other parameters in an incomplete-factorization algorithm;
see section~\ref{subsec:Hybridizing-multilevel-ILU}. In the following,
$\boldsymbol{\Vert\cdot\Vert}$ without a subscript will refer to
the 2-norm, unless otherwise noted. The bound $C$ depends on the
machine epsilon because the effect of condition numbers as an amplification
factor of rounding errors depends on the specific floating-point arithmetic.
Given an $\epsilon$-accurate preconditioner, if $\kappa(\boldsymbol{X})\approx1$,
then 
\begin{equation}
\kappa(\boldsymbol{A}\boldsymbol{G})\lesssim(1+\epsilon)/(1-\epsilon)\label{eq:bound-condition-number}
\end{equation}
(see Appendix~\ref{sec:Proof-of-bound} for its proof). Hence, assuming
$\kappa(\boldsymbol{X})\approx1$, the smaller $\epsilon$ is, the
better the preconditioner is. Note that for left preconditioning,
one would need to change $\boldsymbol{A}\boldsymbol{G}$ to $\boldsymbol{G}\boldsymbol{A}$
in \eqref{eq:norm-bound}.

The following theorem shows that $\epsilon$\emph{-accurate} preconditioners
guarantee the convergence of preconditioned GMRES for consistent systems
in exact arithmetic.
\begin{theorem}
\label{thm:convergence} GMRES with an $\epsilon$-accurate AGI $\boldsymbol{G}$
of $\boldsymbol{A}$ converges to an LS solution of \eqref{eq:least-squares-linear-system}
in exact arithmetic for all $\boldsymbol{b}\in\mathcal{R}(\boldsymbol{A})$
and any initial guess $\boldsymbol{x}_{0}\in\mathbb{R}^{n}$.
\end{theorem}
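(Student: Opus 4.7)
The plan is to reduce the statement to Theorem~\ref{thm:consistent-range-asymmetric}: I would verify for the preconditioned operator $\boldsymbol{A}\boldsymbol{G}$ both the range-preservation condition $\mathcal{R}(\boldsymbol{A}\boldsymbol{G})=\mathcal{R}(\boldsymbol{A})$ and the index-1 condition $\mathcal{N}(\boldsymbol{A}\boldsymbol{G})\cap\mathcal{R}(\boldsymbol{A}\boldsymbol{G})=\{\boldsymbol{0}\}$. Since consistency is given, Theorem~\ref{thm:consistent-range-asymmetric} would then close the argument. The central observation is that the matrix $\boldsymbol{X}$ in Definition~\ref{def:epsilon-accurate} is not arbitrary: the phrase ``as in Proposition~\ref{prop:diagnalizability}'' requires $\boldsymbol{X}\begin{bmatrix}\boldsymbol{I}_r\\ & \boldsymbol{0}\end{bmatrix}\boldsymbol{X}^{-1}=\boldsymbol{A}\boldsymbol{A}^g$ for some exact generalized inverse $\boldsymbol{A}^g$ of $\boldsymbol{A}$, and this pins down the first $r$ columns $\boldsymbol{X}_R$ of $\boldsymbol{X}$ as a basis for $\mathcal{R}(\boldsymbol{A}\boldsymbol{A}^g)=\mathcal{R}(\boldsymbol{A})$.

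Building on this, the key step is to show that $\boldsymbol{B}:=\boldsymbol{X}^{-1}\boldsymbol{A}\boldsymbol{G}\boldsymbol{X}$ has an entirely zero bottom block row. Since $\mathcal{R}(\boldsymbol{A}\boldsymbol{G})\subseteq\mathcal{R}(\boldsymbol{A})=\mathcal{R}(\boldsymbol{X}_R)$, every column of $\boldsymbol{A}\boldsymbol{G}\boldsymbol{X}$ lies in $\mathcal{R}(\boldsymbol{X}_R)$, which is annihilated by the bottom $n-r$ rows of $\boldsymbol{X}^{-1}$ (a direct consequence of $\boldsymbol{X}^{-1}\boldsymbol{X}=\boldsymbol{I}_n$). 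Hence $\boldsymbol{B}=\begin{bmatrix}\boldsymbol{B}_{11} & \boldsymbol{B}_{12}\\ \boldsymbol{0} & \boldsymbol{0}\end{bmatrix}$ with $\boldsymbol{B}_{11}\in\mathbb{R}^{r\times r}$. The bound \eqref{eq:norm-bound} then gives $\|\boldsymbol{B}_{11}-\boldsymbol{I}_r\|_2\leq\epsilon<1$, so $\boldsymbol{B}_{11}$ is invertible and $\text{rank}(\boldsymbol{A}\boldsymbol{G})=\text{rank}(\boldsymbol{B})=r$. Combined with $\mathcal{R}(\boldsymbol{A}\boldsymbol{G})\subseteq\mathcal{R}(\boldsymbol{A})$, this forces $\mathcal{R}(\boldsymbol{A}\boldsymbol{G})=\mathcal{R}(\boldsymbol{A})$.

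For the intersection condition, the block form makes the characteristic polynomial of $\boldsymbol{B}$ factor as $\det(\lambda\boldsymbol{I}_r-\boldsymbol{B}_{11})\cdot\lambda^{n-r}$. Since $\boldsymbol{B}_{11}$ is invertible, the algebraic multiplicity of the eigenvalue $0$ in $\boldsymbol{B}$ is exactly $n-r$, matching its geometric multiplicity $\dim\mathcal{N}(\boldsymbol{B})=n-\text{rank}(\boldsymbol{B})=n-r$. Thus $0$ is a semisimple eigenvalue of $\boldsymbol{B}$, and hence of $\boldsymbol{A}\boldsymbol{G}$ by similarity; this is precisely the assertion $\mathcal{N}(\boldsymbol{A}\boldsymbol{G})\cap\mathcal{R}(\boldsymbol{A}\boldsymbol{G})=\{\boldsymbol{0}\}$. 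Invoking Theorem~\ref{thm:consistent-range-asymmetric} then delivers convergence of GMRES to an LS solution for all $\boldsymbol{b}\in\mathcal{R}(\boldsymbol{A})$ and $\boldsymbol{x}_0\in\mathbb{R}^n$.

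The main obstacle is appreciating the role of the clause ``$\boldsymbol{X}$ as in Proposition~\ref{prop:diagnalizability}''. Without it, a naive perturbation argument based only on the norm bound \eqref{eq:norm-bound} would not control the algebraic multiplicity of $0$; one would essentially need $\epsilon<\tfrac{1}{2}$ to separate the eigenvalue clusters of $\boldsymbol{B}$ near $0$ and $1$ via Bauer--Fike, and to rule out nilpotent contributions that preserve the rank but inflate the Jordan structure at $0$. The block structure, licensed by the definitional constraint on $\boldsymbol{X}$, sidesteps this subtlety entirely.
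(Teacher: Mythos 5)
Your proof is correct, and its top-level skeleton matches the paper's: verify $\mathcal{R}(\boldsymbol{A}\boldsymbol{G})=\mathcal{R}(\boldsymbol{A})$ and $\mathcal{N}(\boldsymbol{A}\boldsymbol{G})\cap\mathcal{R}(\boldsymbol{A}\boldsymbol{G})=\{\boldsymbol{0}\}$, then invoke Theorem~\ref{thm:consistent-range-asymmetric}. Where you diverge is in how those two conditions are established. The paper proves them as Lemmas~\ref{lem:AGI-range-preservation} and \ref{lem:AGI-index-1} by vector-level perturbation estimates: for the first, it shows via the triangle inequality that no nonzero $\boldsymbol{w}\in\operatorname{span}\{\boldsymbol{e}_{1},\dots,\boldsymbol{e}_{r}\}$ is annihilated by $\boldsymbol{X}^{-1}\boldsymbol{A}\boldsymbol{G}\boldsymbol{X}$, which yields $\operatorname{rank}(\boldsymbol{A}\boldsymbol{G})\geq r$; for the second, it writes $\boldsymbol{X}^{-1}\boldsymbol{A}\boldsymbol{G}\boldsymbol{w}=\boldsymbol{v}+(\text{perturbation of norm}<\Vert\boldsymbol{v}\Vert)$ for $\boldsymbol{w}\in\mathcal{R}(\boldsymbol{A}\boldsymbol{A}^{g})$, using idempotence of $\boldsymbol{A}\boldsymbol{A}^{g}$. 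You instead extract the global block structure of $\boldsymbol{B}=\boldsymbol{X}^{-1}\boldsymbol{A}\boldsymbol{G}\boldsymbol{X}$ — the bottom block row vanishes because $\mathcal{R}(\boldsymbol{A}\boldsymbol{G})\subseteq\mathcal{R}(\boldsymbol{A})=\mathcal{R}(\boldsymbol{X}_{R})$ — and then read off both conditions at once from the invertibility of $\boldsymbol{B}_{11}$ (which follows from $\Vert\boldsymbol{B}_{11}-\boldsymbol{I}_{r}\Vert\leq\epsilon<1$): the rank statement gives range equality, and the matching algebraic and geometric multiplicities of the zero eigenvalue give the index-1 condition. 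Both arguments lean on the same two ingredients (the containment $\mathcal{R}(\boldsymbol{A}\boldsymbol{G})\subseteq\mathcal{R}(\boldsymbol{A}\boldsymbol{A}^{g})$ and the bound \eqref{eq:norm-bound}), but your block formulation is more structural and makes the role of the constraint ``$\boldsymbol{X}$ as in Proposition~\ref{prop:diagnalizability}'' explicit, whereas the paper's pointwise estimates are more elementary and avoid any appeal to characteristic polynomials or Jordan structure. Your closing remark about why a naive Bauer--Fike-style perturbation of eigenvalues would be insufficient without the structural constraint on $\boldsymbol{X}$ is a fair observation and consistent with why the paper also routes both lemma proofs through $\mathcal{R}(\boldsymbol{A}\boldsymbol{A}^{g})$ rather than through eigenvalue perturbation alone.
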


To prove the theorem, we first assert the following two lemmas.
\begin{lemma}
\label{lem:AGI-range-preservation}An $\epsilon$-accurate AGI $\boldsymbol{G}$
preserves the range of $\boldsymbol{A}$, i.e., $\mathcal{R}(\boldsymbol{A})=\mathcal{R}(\boldsymbol{A}\boldsymbol{G})$.
\end{lemma}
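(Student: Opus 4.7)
The plan is to exploit the $\epsilon$-accuracy bound via singular value perturbation, combined with the trivial inclusion $\mathcal{R}(\boldsymbol{A}\boldsymbol{G})\subseteq\mathcal{R}(\boldsymbol{A})$, to conclude that both ranges have the same dimension $r=\text{rank}(\boldsymbol{A})$, hence coincide.

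First I would record the easy direction: for any $\boldsymbol{y}\in\mathbb{R}^{n}$, $\boldsymbol{A}\boldsymbol{G}\boldsymbol{y}=\boldsymbol{A}(\boldsymbol{G}\boldsymbol{y})\in\mathcal{R}(\boldsymbol{A})$, so $\mathcal{R}(\boldsymbol{A}\boldsymbol{G})\subseteq\mathcal{R}(\boldsymbol{A})$ and therefore $\text{rank}(\boldsymbol{A}\boldsymbol{G})\le r$. Since both subspaces are finite-dimensional, it suffices to prove the reverse rank inequality $\text{rank}(\boldsymbol{A}\boldsymbol{G})\ge r$, as then equal dimensions and subspace inclusion yield equality of ranges.

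Next I would use Definition~\ref{def:epsilon-accurate} to write $\boldsymbol{X}^{-1}\boldsymbol{A}\boldsymbol{G}\boldsymbol{X}=\boldsymbol{M}+\boldsymbol{E}$, where $\boldsymbol{M}=\begin{bmatrix}\boldsymbol{I}_{r}\\ & \boldsymbol{0}\end{bmatrix}$ and $\|\boldsymbol{E}\|_{2}=\epsilon<1$. The singular values of $\boldsymbol{M}$ are $\sigma_{1}(\boldsymbol{M})=\cdots=\sigma_{r}(\boldsymbol{M})=1$ and $\sigma_{r+1}(\boldsymbol{M})=\cdots=\sigma_{n}(\boldsymbol{M})=0$. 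By Weyl's inequality for singular values, $|\sigma_{i}(\boldsymbol{M}+\boldsymbol{E})-\sigma_{i}(\boldsymbol{M})|\le\|\boldsymbol{E}\|_{2}=\epsilon$ for every $i$; in particular, $\sigma_{r}(\boldsymbol{M}+\boldsymbol{E})\ge 1-\epsilon>0$. Hence $\text{rank}(\boldsymbol{M}+\boldsymbol{E})\ge r$, and since $\boldsymbol{X}$ is nonsingular, $\text{rank}(\boldsymbol{A}\boldsymbol{G})=\text{rank}(\boldsymbol{X}^{-1}\boldsymbol{A}\boldsymbol{G}\boldsymbol{X})=\text{rank}(\boldsymbol{M}+\boldsymbol{E})\ge r$.

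Combining the two inequalities gives $\text{rank}(\boldsymbol{A}\boldsymbol{G})=r=\dim\mathcal{R}(\boldsymbol{A})$, and together with $\mathcal{R}(\boldsymbol{A}\boldsymbol{G})\subseteq\mathcal{R}(\boldsymbol{A})$ this forces equality of the two subspaces. The only conceptual subtlety — and the one place where the strict bound $\epsilon<1$ in Definition~\ref{def:epsilon-accurate} is essential rather than cosmetic — is the step that invokes Weyl's bound to exclude a drop in the $r$th singular value; without $\epsilon<1$, the perturbation could collapse one of the unit singular values of $\boldsymbol{M}$ to zero and reduce the rank, so this is the hinge of the argument.
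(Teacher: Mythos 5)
Your proof is correct, and it follows the same overall strategy as the paper's: establish $\operatorname{rank}(\boldsymbol{A}\boldsymbol{G})\geq r$ from the $\epsilon<1$ bound, then combine with the trivial inclusion $\mathcal{R}(\boldsymbol{A}\boldsymbol{G})\subseteq\mathcal{R}(\boldsymbol{A})$ and a dimension count. The only difference is in how the rank lower bound is obtained. The paper restricts $\boldsymbol{X}^{-1}\boldsymbol{A}\boldsymbol{G}\boldsymbol{X}$ to the invariant subspace $\mathrm{span}\{\boldsymbol{e}_{1},\dots,\boldsymbol{e}_{r}\}=\mathcal{R}(\boldsymbol{X}^{-1}\boldsymbol{A}\boldsymbol{A}^{g}\boldsymbol{X})$ and uses a bare triangle-inequality estimate, $\Vert\boldsymbol{w}\Vert\leq\Vert\boldsymbol{X}^{-1}\boldsymbol{A}\boldsymbol{G}\boldsymbol{X}\boldsymbol{w}\Vert+\epsilon\Vert\boldsymbol{w}\Vert$, to show the map is injective there, so $\dim\mathcal{N}(\boldsymbol{X}^{-1}\boldsymbol{A}\boldsymbol{G}\boldsymbol{X})\leq n-r$. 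You instead invoke Weyl's singular-value perturbation inequality to get $\sigma_{r}(\boldsymbol{M}+\boldsymbol{E})\geq1-\epsilon>0$ directly. Your route is slightly more packaged (it leans on a named theorem where the paper uses two lines of elementary estimation, and it even bypasses the need to introduce the exact generalized inverse $\boldsymbol{A}^{g}$ from Proposition~\ref{prop:diagnalizability}), and as a bonus it yields the quantitative statement $\sigma_{r}(\boldsymbol{X}^{-1}\boldsymbol{A}\boldsymbol{G}\boldsymbol{X})\geq1-\epsilon$ rather than mere rank preservation; the paper's version has the advantage of being self-contained. Your closing observation that $\epsilon<1$ is the hinge of the argument is exactly right and plays the identical role in the paper's proof.
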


\begin{proof}
Given $\boldsymbol{X}\in\mathbb{R}^{n\times n}$ in Definition~\ref{def:epsilon-accurate},
due to Proposition~\ref{prop:diagnalizability}, there exists $\boldsymbol{A}^{g}$
such that $\boldsymbol{X}^{-1}\boldsymbol{A}\boldsymbol{A}^{g}\boldsymbol{X}=\begin{bmatrix}\boldsymbol{I}_{r}\\
 & \boldsymbol{0}
\end{bmatrix}$. For all $\boldsymbol{w}\in\mathcal{R}(\boldsymbol{X}^{-1}\boldsymbol{A}\boldsymbol{A}^{g}\boldsymbol{X})=\text{span}\{\boldsymbol{e}_{1},\text{\ensuremath{\boldsymbol{e}_{2}}},\dots\boldsymbol{e}_{r}\}$,
\begin{align*}
\left\Vert \boldsymbol{w}\right\Vert  & =\left\Vert \boldsymbol{X}^{-1}\boldsymbol{A}\boldsymbol{A}^{g}\boldsymbol{X}\boldsymbol{w}\right\Vert \\
 & =\left\Vert \boldsymbol{X}^{-1}(\boldsymbol{A}\boldsymbol{G}-(\boldsymbol{A}\boldsymbol{G}-\boldsymbol{A}\boldsymbol{A}^{g}))\boldsymbol{X}\boldsymbol{w}\right\Vert \\
 & \leq\left\Vert \boldsymbol{X}^{-1}\boldsymbol{A}\boldsymbol{G}\boldsymbol{X}\boldsymbol{w}\right\Vert +\left\Vert \boldsymbol{X}^{-1}\left(\boldsymbol{A}\boldsymbol{G}-\boldsymbol{A}\boldsymbol{A}^{g}\right)\boldsymbol{X}\boldsymbol{w}\right\Vert \\
 & \leq\left\Vert \boldsymbol{X}^{-1}\boldsymbol{A}\boldsymbol{G}\boldsymbol{X}\boldsymbol{w}\right\Vert +\epsilon\left\Vert \boldsymbol{w}\right\Vert ,
\end{align*}
where $\epsilon<1$ due to Definition~\ref{def:epsilon-accurate}.
Hence, $\boldsymbol{X}^{-1}\boldsymbol{A}\boldsymbol{G}\boldsymbol{X}\boldsymbol{w}\neq\boldsymbol{0}$
if $\boldsymbol{w}\neq\boldsymbol{0}$. In other words, 
\[
\mathcal{N}(\boldsymbol{X}^{-1}\boldsymbol{A}\boldsymbol{G}\boldsymbol{X})\cap\mathcal{R}(\boldsymbol{X}^{-1}\boldsymbol{A}\boldsymbol{A}^{g}\boldsymbol{X})=\{\boldsymbol{0}\}.
\]
Therefore, $\text{dim}(\mathcal{N}(\boldsymbol{X}^{-1}\boldsymbol{A}\boldsymbol{G}\boldsymbol{X}))\leq n-r$
and $\text{rank}(\boldsymbol{X}^{-1}\boldsymbol{A}\boldsymbol{G}\boldsymbol{X})\geq r=\text{\text{rank}(\ensuremath{\boldsymbol{A}})}$.
Since $\text{rank}(\boldsymbol{X}^{-1}\boldsymbol{A}\boldsymbol{G}\boldsymbol{X})\leq\text{rank}(\boldsymbol{A})$,
$\text{rank}(\boldsymbol{A}\boldsymbol{G})=\text{rank}(\boldsymbol{X}^{-1}\boldsymbol{A}\boldsymbol{G}\boldsymbol{X})=\text{rank}(\boldsymbol{A})$,
which along with $\mathcal{R}(\boldsymbol{A}\boldsymbol{G})\subseteq\mathcal{R}(\boldsymbol{A})$
implies that $\mathcal{R}(\boldsymbol{A})=\mathcal{R}(\boldsymbol{A}\boldsymbol{G})$.
\end{proof}
\begin{lemma}
\label{lem:AGI-index-1}Given an $\epsilon$-accurate AGI $\boldsymbol{G}$,
$\mathcal{R}(\boldsymbol{A}\boldsymbol{G})\cap\mathcal{N}(\boldsymbol{A}\boldsymbol{G})=\{\boldsymbol{0}\}$.
\end{lemma}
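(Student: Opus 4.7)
The plan is to piggyback on the inequality that already appears as an intermediate step in the proof of Lemma~\ref{lem:AGI-range-preservation} and then glue it to that lemma's conclusion. The whole claim turns out to be essentially a one-line corollary once the right two pieces are lined up.

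Concretely, in the diagonalizing basis of Proposition~\ref{prop:diagnalizability}, the argument for Lemma~\ref{lem:AGI-range-preservation} established that $\|\boldsymbol{X}^{-1}\boldsymbol{A}\boldsymbol{G}\boldsymbol{X}\boldsymbol{w}\|\geq(1-\epsilon)\|\boldsymbol{w}\|$ for every $\boldsymbol{w}\in\mathcal{R}(\boldsymbol{X}^{-1}\boldsymbol{A}\boldsymbol{A}^{g}\boldsymbol{X})=\text{span}\{\boldsymbol{e}_{1},\dots,\boldsymbol{e}_{r}\}$. Pulling this inequality back through the nonsingular similarity $\boldsymbol{X}$, it says precisely that $\boldsymbol{A}\boldsymbol{G}$ is injective on $\mathcal{R}(\boldsymbol{A}\boldsymbol{A}^{g})$, i.e., $\mathcal{N}(\boldsymbol{A}\boldsymbol{G})\cap\mathcal{R}(\boldsymbol{A}\boldsymbol{A}^{g})=\{\boldsymbol{0}\}$. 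All I then need to do is replace $\mathcal{R}(\boldsymbol{A}\boldsymbol{A}^{g})$ by $\mathcal{R}(\boldsymbol{A}\boldsymbol{G})$ inside this intersection. For that, I chain $\mathcal{R}(\boldsymbol{A}\boldsymbol{A}^{g})=\mathcal{R}(\boldsymbol{A})$ from Lemma~\ref{lem:pseudo-inv-range-preservation} with $\mathcal{R}(\boldsymbol{A})=\mathcal{R}(\boldsymbol{A}\boldsymbol{G})$ from Lemma~\ref{lem:AGI-range-preservation} to get $\mathcal{R}(\boldsymbol{A}\boldsymbol{A}^{g})=\mathcal{R}(\boldsymbol{A}\boldsymbol{G})$, so $\mathcal{N}(\boldsymbol{A}\boldsymbol{G})\cap\mathcal{R}(\boldsymbol{A}\boldsymbol{G})=\{\boldsymbol{0}\}$ as desired.

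The only step that needs any bookkeeping is the pullback: one must verify that the similarity by $\boldsymbol{X}$ relates $\mathcal{N}(\boldsymbol{X}^{-1}\boldsymbol{A}\boldsymbol{G}\boldsymbol{X})$ and $\mathcal{R}(\boldsymbol{X}^{-1}\boldsymbol{A}\boldsymbol{A}^{g}\boldsymbol{X})$ to $\mathcal{N}(\boldsymbol{A}\boldsymbol{G})$ and $\mathcal{R}(\boldsymbol{A}\boldsymbol{A}^{g})$, respectively, by the common invertible change of basis $\boldsymbol{X}^{-1}$, so that a trivial intersection on one side forces a trivial intersection on the other. This is a routine linear-algebra check and I do not anticipate any real obstacle, which is why I view this lemma as an essentially immediate byproduct of the machinery already developed for Lemma~\ref{lem:AGI-range-preservation} rather than as an independent argument.
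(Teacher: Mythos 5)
Your proposal is correct and follows essentially the same route as the paper: the paper's own proof also takes $\boldsymbol{w}\in\mathcal{R}(\boldsymbol{A}\boldsymbol{G})=\mathcal{R}(\boldsymbol{A})=\mathcal{R}(\boldsymbol{A}\boldsymbol{A}^{g})$, conjugates by $\boldsymbol{X}$, and uses the $\epsilon<1$ perturbation bound to conclude $\boldsymbol{A}\boldsymbol{G}\boldsymbol{w}\neq\boldsymbol{0}$. The only cosmetic difference is that you reuse the inequality $\Vert\boldsymbol{X}^{-1}\boldsymbol{A}\boldsymbol{G}\boldsymbol{X}\boldsymbol{w}\Vert\geq(1-\epsilon)\Vert\boldsymbol{w}\Vert$ already derived in the proof of Lemma~\ref{lem:AGI-range-preservation}, whereas the paper re-derives it via the idempotence of $\boldsymbol{A}\boldsymbol{A}^{g}$.
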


\begin{proof}
Consider $\boldsymbol{w}\in\mathcal{R}(\boldsymbol{A}\boldsymbol{G})\backslash\{\boldsymbol{0}\}$,
where $\mathcal{R}(\boldsymbol{A}\boldsymbol{G})=\mathcal{R}(\boldsymbol{A})=\mathcal{R}(\boldsymbol{A}\boldsymbol{A}^{g})$
due to Lemmas~\ref{lem:pseudo-inv-range-preservation} and \ref{lem:AGI-range-preservation}.
Since $\boldsymbol{A}\boldsymbol{A}^{g}$ is idempotent, $\boldsymbol{A}\boldsymbol{A}^{g}\boldsymbol{w}=\boldsymbol{w}$.
Let $\boldsymbol{v}=\boldsymbol{X}^{-1}\boldsymbol{w}$, where $\boldsymbol{X}$
is defined in Definition~\ref{def:epsilon-accurate}, and then $\boldsymbol{X}^{-1}\boldsymbol{A}\boldsymbol{A}^{g}\boldsymbol{X}\boldsymbol{v}=\boldsymbol{X}^{-1}\boldsymbol{A}\boldsymbol{A}^{g}\boldsymbol{w}=\boldsymbol{X}^{-1}\boldsymbol{w}=\boldsymbol{v}.$
Hence, 
\begin{align*}
\boldsymbol{X}^{-1}\boldsymbol{A}\boldsymbol{G}\boldsymbol{w} & =\boldsymbol{X}^{-1}\boldsymbol{A}\boldsymbol{G}\boldsymbol{X}\boldsymbol{v}\\
 & =\boldsymbol{X}^{-1}\boldsymbol{A}\boldsymbol{A}^{g}\boldsymbol{X}\boldsymbol{v}+\boldsymbol{X}^{-1}(\boldsymbol{A}\boldsymbol{G}-\boldsymbol{A}\boldsymbol{A}^{g})\boldsymbol{X}\boldsymbol{v}\\
 & =\boldsymbol{v}+\boldsymbol{X}^{-1}(\boldsymbol{A}\boldsymbol{G}-\boldsymbol{A}\boldsymbol{A}^{g})\boldsymbol{X}\boldsymbol{v},
\end{align*}
where $\left\Vert \boldsymbol{X}^{-1}(\boldsymbol{A}\boldsymbol{G}-\boldsymbol{A}\boldsymbol{A}^{g})\boldsymbol{X}\boldsymbol{v}\right\Vert <\left\Vert \boldsymbol{v}\right\Vert $
due to Definition~\ref{def:epsilon-accurate}. Therefore, $\boldsymbol{X}^{-1}\boldsymbol{A}\boldsymbol{G}\boldsymbol{w}\neq\boldsymbol{0}$
and $\boldsymbol{w}\not\in\mathcal{N}(\boldsymbol{A}\boldsymbol{G})$.
\end{proof}
Given these two lemmas, it is now straightforward to prove Theorem~\ref{thm:convergence}.
\begin{proof}
Due to Lemmas~\ref{lem:AGI-range-preservation} and \ref{lem:AGI-index-1},
given an $\epsilon$-accurate $\boldsymbol{G}$, $\boldsymbol{A}\boldsymbol{G}$
satisfy the conditions in Theorem~\ref{thm:consistent-range-asymmetric}.
Hence, the preconditioned GMRES converges to an LS solution of \eqref{eq:least-squares-linear-system}.
\end{proof}
Note that Theorem~\ref{thm:convergence} is not a necessary condition
for convergence. For example, $\boldsymbol{A}^{T}$ is not an AGI
but AB-GMRES converges if one uses $\boldsymbol{A}^{T}$ as $\boldsymbol{B}$
\cite{hayami2010gmres}. However, Theorem~\ref{thm:convergence}
provides us the guideline for designing nearly optimal preconditioners
for consistent systems, which we will address next.

\section{\label{sec:HIF}Hybrid incomplete factorization for consistent systems}

In this section, we introduce an efficient and robust preconditioner
for consistent asymmetric systems. We start by reviewing HILUCSI for
nonsingular systems \cite{chen2021hilucsi} and showing that it is
unstable for singular systems. We then introduce HIF by overcoming
the instabilities in HILUCSI.

\subsection{\label{subsec:MLILU}Preliminary: HILUCSI for nonsingular systems}

\emph{HILUCSI} stands for hierarchical incomplete LU-Crout with scalability
and inverse-based droppings. Similar to ILUPACK \cite{bollhofer2011ilupack},
HILUCSI computes an MLILU factorization, which is a general algebraic
framework for building block preconditioners. More precisely, let
$\boldsymbol{A}$ be the input coefficient matrix. A two-level ILU
reads
\begin{equation}
\boldsymbol{P}^{T}\boldsymbol{W}\boldsymbol{A}\boldsymbol{V}\boldsymbol{Q}=\begin{bmatrix}\boldsymbol{B} & \boldsymbol{F}\\
\boldsymbol{E} & \boldsymbol{C}
\end{bmatrix}\approx\tilde{\boldsymbol{M}}=\begin{bmatrix}\tilde{\boldsymbol{B}} & \tilde{\boldsymbol{F}}\\
\tilde{\boldsymbol{E}} & \boldsymbol{C}
\end{bmatrix}=\begin{bmatrix}\boldsymbol{L}_{B}\\
\boldsymbol{L}_{E} & \boldsymbol{I}
\end{bmatrix}\begin{bmatrix}\boldsymbol{D}_{B}\\
 & \boldsymbol{S}
\end{bmatrix}\begin{bmatrix}\boldsymbol{U}_{B} & \boldsymbol{U}_{F}\\
 & \boldsymbol{I}
\end{bmatrix},\label{eq:two-level-ILU}
\end{equation}
where $\boldsymbol{B}\approx\tilde{\boldsymbol{B}}=\boldsymbol{L}_{B}\boldsymbol{D}_{B}\boldsymbol{U}_{B}$
corresponds to an incomplete factorization of the leading block, and
$\boldsymbol{S}=\boldsymbol{C}-\boldsymbol{L}_{E}\boldsymbol{D}_{B}\boldsymbol{U}_{F}$
is the Schur complement. $\boldsymbol{P}$ and $\boldsymbol{Q}$ correspond
to row and column permutation matrices, respectively, and $\boldsymbol{W}$
and $\boldsymbol{V}$ are row and column scaling diagonal matrices,
respectively. These permutation and scaling matrices are side-products
of some preprocessing steps (such as using equilibration \cite{duff2001algorithms}
and reordering \cite{amestoy1996approximate}) and dynamic pivoting
strategies. For the two-level ILU in \eqref{eq:two-level-ILU}, $\boldsymbol{W}^{-1}\boldsymbol{P}\tilde{\boldsymbol{M}}\boldsymbol{Q}^{T}\boldsymbol{V}$
provides a preconditioner of $\boldsymbol{A}$. Given a block vector
$\boldsymbol{u}=\begin{bmatrix}\boldsymbol{u}_{1}\\
\boldsymbol{u}_{2}
\end{bmatrix}$, where $\boldsymbol{u}_{1}$ and $\boldsymbol{u}_{2}$ corresponds
to $\tilde{\boldsymbol{B}}$ and $\boldsymbol{C}$, respectively,
\begin{equation}
\tilde{\boldsymbol{M}}^{-1}\vec{u}=\begin{bmatrix}\tilde{\boldsymbol{B}}^{-1}\boldsymbol{u}_{1}\\
\vec{0}
\end{bmatrix}+\begin{bmatrix}-\tilde{\boldsymbol{B}}^{-1}\tilde{\boldsymbol{F}}\\
\vec{I}
\end{bmatrix}\boldsymbol{S}^{-1}(\boldsymbol{u}_{2}-\tilde{\boldsymbol{E}}\tilde{\boldsymbol{B}}^{-1}\boldsymbol{u}_{1}).\label{eq:Schur-complement-inverse}
\end{equation}
By factorizing the Schur complement $\boldsymbol{S}$ recursively,
we then obtain an $m$-level ILU and a corresponding multilevel preconditioner,
namely,
\begin{equation}
\boldsymbol{M}=\underset{\boldsymbol{L}}{\underbrace{\boldsymbol{W}_{1}^{-1}\boldsymbol{P}_{1}\boldsymbol{L}_{1}\cdots\boldsymbol{W}_{m}^{-1}\boldsymbol{P}_{m}\boldsymbol{L}_{m}}}\begin{bmatrix}\boldsymbol{D}\\
 & \boldsymbol{S}_{m}
\end{bmatrix}\underset{\boldsymbol{U}}{\underbrace{\boldsymbol{U}_{m}\boldsymbol{Q}_{m}^{T}\boldsymbol{V}_{m}^{-1}\cdots\boldsymbol{U}_{1}\boldsymbol{Q}_{1}^{T}\boldsymbol{V}_{1}^{-1}}},\label{eq:multilevel-ILU}
\end{equation}
where $\boldsymbol{L}_{k}\in\mathbb{R}^{n\times n}$ denotes $\begin{bmatrix}\ensuremath{\boldsymbol{I}}_{n-n_{k}}\\
 & \boldsymbol{L}_{B}\\
 & \boldsymbol{L}_{E} & \boldsymbol{I}
\end{bmatrix}$ for $\begin{bmatrix}\boldsymbol{L}_{B}\\
\boldsymbol{L}_{E} & \boldsymbol{I}
\end{bmatrix}\in\mathbb{R}^{n_{k}\times n_{k}}$ in \eqref{eq:two-level-ILU} at the $k$th level (similarly for $\boldsymbol{P}_{k}$,
$\boldsymbol{W}_{k}$, $\boldsymbol{Q}_{k}$, $\boldsymbol{V}_{k}$,
and $\boldsymbol{U}_{k}$), $\boldsymbol{D}\in\mathbb{R}^{d\times d}$
is composed of the ``union'' of $\boldsymbol{D}_{B}$ from all $m$
levels, and $\boldsymbol{S}_{m}$ is the final Schur complement. MLILU
factorizes $\boldsymbol{S}_{m}$ using a complete LU factorization
with pivoting, assuming its size is small enough.

The multilevel framework above is shared by HILUCSI \cite{chen2021hilucsi},
ILUPACK \cite{bollhofer2011ilupack}, ILU++ \cite{mayer2007multilevel},
etc. However, the multilevel structures are obtained using different
strategies for different methods. HILUCSI constructs the levels using
two \emph{deferring} strategies at each level. First, after applying
equilibration (in particular, using MC64 \cite{duff2001algorithms}),
a row and its corresponding column are \emph{statically} deferred
to the next level if its diagonal entry is zero (or close to machine
precision). Second, during factorization at the $k$th level, if a
diagonal entry causes ill-conditioned triangular (i.e., $\boldsymbol{L}_{k}$
and $\boldsymbol{U}_{k}$) or diagonal (i.e., $\boldsymbol{D}_{B}$)
factors, then we defer the row and its corresponding column dynamically
to the next level. For efficiency, HILUCSI introduced a scalability-oriented
dropping strategy to achieve near-linear time complexity. To achieve
robustness, it also employs some well-known techniques, including
the Crout version of ILU factorization \cite{li2003crout}, inverse-based
dropping \cite{bollhofer2006multilevel}, fill-reduction reordering
\cite{amestoy1996approximate}, etc. For additional technical details
on HILUCSI, we refer readers to \cite{chen2021hilucsi}.

\begin{remark}The notion of $\epsilon$-accuracy applies to HILUCSI
for nonsingular systems by considering $\boldsymbol{A}^{g}=\boldsymbol{A}^{-1}$.
To see this, given an $\boldsymbol{M}$ in \eqref{eq:multilevel-ILU},
let $\boldsymbol{M}_{*}$ and $\boldsymbol{L}_{*}$ be the corresponding
matrices in \eqref{eq:multilevel-ILU} without dropping but with the
same permutation and scaling factors as for $\boldsymbol{M}$. Then
$\boldsymbol{A}\boldsymbol{A}^{-1}=\boldsymbol{A}\boldsymbol{M}_{*}^{-1}=\boldsymbol{L}_{*}\boldsymbol{I}\boldsymbol{L}_{*}^{-1}$,
and $\boldsymbol{M}$ is $\epsilon$-accurate with respect to $\boldsymbol{A}^{g}=\boldsymbol{A}^{-1}$
and $\boldsymbol{X}=\boldsymbol{L}_{*}$. HILUCSI ensures the well-conditioning
of $\boldsymbol{L}$ through static and dynamic deferring, so $\boldsymbol{L}_{*}$
is expected to be well-conditioned if the perturbations from the droppings
are sufficiently small. This analysis based on $\epsilon$-accuracy
explains the robustness of HILUCSI-preconditioned GMRES for nonsingular
systems in \cite{chen2021hilucsi}. For singular systems, however,
HILUCSI sometimes fails, which we address next.\end{remark}

\subsection{\label{subsec:Insufficiency-of-HILUCSI}Insufficiency of HILUCSI
for singular systems}

The following proposition will help identify the potential causes
of failures of HILUCSI for singular systems \cite{chen2021hilucsi}.
\begin{proposition}
\label{prop:breakdown}In the $m$-level ILU in \eqref{eq:multilevel-ILU},
if $\boldsymbol{S}_{m}$ is factorized using LU factorization with
pivoting, the classical 2-norm condition number of $\boldsymbol{M}$
is bounded by 
\begin{equation}
\kappa(\boldsymbol{M})\leq\kappa\left(\begin{bmatrix}\boldsymbol{D}\\
 & \boldsymbol{S}_{m}
\end{bmatrix}\right)\prod_{k}\left(\kappa(\boldsymbol{L}_{k})\kappa(\boldsymbol{U}_{k})\right)\prod_{k}\left(\kappa(\boldsymbol{W}_{k})\kappa(\boldsymbol{V}_{k})\right).\label{eq:cond-bound-mlevel}
\end{equation}
\end{proposition}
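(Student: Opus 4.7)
The plan is to recognize that Proposition~\ref{prop:breakdown} is essentially a direct consequence of two elementary facts about the $2$-norm condition number: submultiplicativity, i.e., $\kappa(\boldsymbol{A}\boldsymbol{B})\le\kappa(\boldsymbol{A})\kappa(\boldsymbol{B})$ for nonsingular $\boldsymbol{A},\boldsymbol{B}$, and inversion invariance, i.e., $\kappa(\boldsymbol{A}^{-1})=\kappa(\boldsymbol{A})$. The only thing to do is to apply them carefully to the factorization \eqref{eq:multilevel-ILU}.

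First, I would split $\boldsymbol{M}$ into its three main pieces by writing
\[
\boldsymbol{M}=\boldsymbol{L}\,\begin{bmatrix}\boldsymbol{D}\\ & \boldsymbol{S}_{m}\end{bmatrix}\boldsymbol{U},
\]
which gives $\kappa(\boldsymbol{M})\le\kappa(\boldsymbol{L})\,\kappa\!\bigl(\operatorname{diag}(\boldsymbol{D},\boldsymbol{S}_{m})\bigr)\,\kappa(\boldsymbol{U})$ by submultiplicativity. Here the assumption that $\boldsymbol{S}_{m}$ is factorized by LU with pivoting is needed only to guarantee that $\boldsymbol{S}_{m}$ (and hence the middle block-diagonal factor and $\boldsymbol{M}$ itself) is nonsingular, so that all the condition numbers in the bound are well defined.

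Next, I would peel the per-level contributions out of $\boldsymbol{L}$ and $\boldsymbol{U}$ by repeated application of submultiplicativity, using that each permutation matrix $\boldsymbol{P}_{k}$ and $\boldsymbol{Q}_{k}^{T}$ is orthogonal and therefore contributes a factor of $\kappa(\boldsymbol{P}_{k})=\kappa(\boldsymbol{Q}_{k}^{T})=1$, and that $\kappa(\boldsymbol{W}_{k}^{-1})=\kappa(\boldsymbol{W}_{k})$ and $\kappa(\boldsymbol{V}_{k}^{-1})=\kappa(\boldsymbol{V}_{k})$. This yields
\[
\kappa(\boldsymbol{L})\le\prod_{k}\kappa(\boldsymbol{W}_{k})\,\kappa(\boldsymbol{L}_{k}),\qquad \kappa(\boldsymbol{U})\le\prod_{k}\kappa(\boldsymbol{U}_{k})\,\kappa(\boldsymbol{V}_{k}).
\]
Substituting these two bounds into the one from the first step gives exactly \eqref{eq:cond-bound-mlevel}.

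There is essentially no hard step here; the argument is bookkeeping on the factorization \eqref{eq:multilevel-ILU}. The only subtlety worth flagging is that submultiplicativity of $\kappa$ requires every factor to be nonsingular, so one must confirm that each $\boldsymbol{L}_{k}$, $\boldsymbol{U}_{k}$, $\boldsymbol{W}_{k}$, $\boldsymbol{V}_{k}$, and $\boldsymbol{S}_{m}$ produced by the MLILU procedure is indeed invertible. For the $\boldsymbol{L}_{k}$ and $\boldsymbol{U}_{k}$ blocks, this is guaranteed because their diagonals are unit triangular on the leading block and identity on the trailing block; for $\boldsymbol{W}_{k}$ and $\boldsymbol{V}_{k}$, invertibility follows from equilibration producing nonzero scaling factors; and for $\boldsymbol{S}_{m}$, it follows from the stated assumption on its LU factorization with pivoting. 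Once these standard facts are noted, the bound follows immediately.
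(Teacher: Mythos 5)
Your proof is correct and follows essentially the same route as the paper, which simply invokes the submultiplicative property of the 2-norm; your version just spells out the bookkeeping (orthogonality of the permutations, $\kappa(\boldsymbol{W}_k^{-1})=\kappa(\boldsymbol{W}_k)$, and the nonsingularity caveats) that the paper leaves implicit.
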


\begin{proof}
The inequality follows from the submultiplicative property of the
2-norm. 
\end{proof}
In \eqref{eq:cond-bound-mlevel}, $\kappa\left(\begin{bmatrix}\boldsymbol{D}\\
 & \boldsymbol{S}_{m}
\end{bmatrix}\right)=\max\{\Vert\boldsymbol{D}\Vert,\Vert\boldsymbol{S}_{m}\Vert\}\max\{\Vert\boldsymbol{D}^{-1}\Vert,\Vert\boldsymbol{S}_{m}^{-1}\Vert\}$. If the condition numbers of all the triangular factors are $\mathcal{O}(1)$
and the magnitudes of all the entries in $\boldsymbol{D}$, $\boldsymbol{W}_{k}$,
and $\boldsymbol{V}_{k}$ are close to $1$, then a large $\kappa(\boldsymbol{M})$
(e.g., $\kappa(\boldsymbol{M})=\mathcal{O}(1/\epsilon_{\text{mach}})$)
implies a large $\kappa(\boldsymbol{S}_{m})$. In this case, the LU
factorization of $\boldsymbol{S}_{m}$ may blow up. In general, if
$\boldsymbol{A}$ is singular and dropping is minimal, then $\boldsymbol{M}\approx\boldsymbol{A}$,
and $\boldsymbol{M}$ may be extremely ill-conditioned due to an ill-conditioned
$\boldsymbol{S}_{m}$, which is likely to happen if $\text{dim}(\mathcal{N}(\boldsymbol{A}))\gg1$.

From \eqref{eq:cond-bound-mlevel}, it is also evident that the preconditioner
may be unstable if the scaling factors $\boldsymbol{W}_{k}$ and $\boldsymbol{V}_{k}$
are too large or too small. In HILUCSI, we obtain the scaling factors
using MC64 \cite{duff2001algorithms}, which computes a maximum weighted
matching on the bipartite graph $(R,C,E)$, where $R$, $C$, and
$E$ correspond to the rows, columns, and nonzeros of $\boldsymbol{A}$,
respectively. Such a matching does not exist for structurally singular
systems \cite{duff2005strategies}, so the scaling factors are ill-defined.
As a result, the scaling factors may be susceptible to perturbation
for ``nearly'' structurally singular systems. In our experiments,
we observed that the scaling factors from MC64 can be abnormally large
($\gg1/\epsilon_{\text{mach}}$) or small ($\ll1/\epsilon_{\text{mach}}$)
for some singular systems, especially on the Schur complements for
singular systems.

\subsection{\label{subsec:Hybridizing-multilevel-ILU}Hybridizing multilevel
ILU and RRQR}

To develop a stable $\epsilon$-accurate preconditioner, we hybridize
HILUCSI with RRQR. In particular, we apply QRCP on the Schur complement
$\boldsymbol{S}_{m}$ in \eqref{eq:multilevel-ILU} to obtain 
\begin{equation}
\boldsymbol{S}_{m}\boldsymbol{P}=\boldsymbol{Q}\begin{bmatrix}\boldsymbol{R}_{1} & \boldsymbol{R}_{2}\\
 & \boldsymbol{0}
\end{bmatrix},\label{eq:QRCP-Schur-Complement}
\end{equation}
where $\boldsymbol{R}_{1}\in\mathbb{R}^{n_{s}\times n_{s}}$ with
$n_{s}=\text{rank}(\boldsymbol{S}_{m})$, $r_{11}\geq r_{22}\geq\dots\ge r_{n_{s}n_{s}}>0$
along its diagonal, and $\kappa(\boldsymbol{R}_{1})\ll1/\epsilon_{\text{mach}}$.
Let the RPO be 
\begin{equation}
\boldsymbol{G}=\boldsymbol{M}^{g}=\boldsymbol{U}^{-1}\begin{bmatrix}\boldsymbol{D}^{-1}\\
 & \boldsymbol{S}_{m}^{g}
\end{bmatrix}\boldsymbol{L}^{-1}=\boldsymbol{U}^{-1}\begin{bmatrix}\boldsymbol{D}^{-1}\\
 & \boldsymbol{P}\begin{bmatrix}\boldsymbol{R}_{1}^{-1} & \boldsymbol{0}\\
\boldsymbol{} & \boldsymbol{0}
\end{bmatrix}\boldsymbol{Q}^{T}
\end{bmatrix}\boldsymbol{L}^{-1},\label{eq:mlilu-pseudoinverse}
\end{equation}
where $\boldsymbol{L}$ and $\boldsymbol{U}$ are the same as those
in \eqref{eq:multilevel-ILU}. We refer to this new preconditioner
as \emph{hybrid incomplete factorization}, or \emph{HIF}. The following
lemma shows that if there is no dropping in LU, HIF enables optimal
convergence of GMRES in exact arithmetic.
\begin{lemma}
\label{lem:range-preservation} If no dropping is applied in the ILU
portion of HIF, then $\boldsymbol{G}$ in \eqref{eq:mlilu-pseudoinverse}
is a generalized inverse of $\boldsymbol{A}$.
\end{lemma}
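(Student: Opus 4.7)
The plan is to reduce the claim to a short block calculation, once we exploit the hypothesis that there is no dropping in the ILU. First, I would observe that if none of the incomplete LU steps introduce a dropping error, then the multilevel factorization in \eqref{eq:multilevel-ILU} holds as an equality, i.e., $\boldsymbol{A} = \boldsymbol{M}$. Indeed, at each level the two-level identity \eqref{eq:two-level-ILU} becomes exact (the approximation sign $\approx$ turns into equality), and recursing on $\boldsymbol{S}_k$ gives $\boldsymbol{A} = \boldsymbol{L}\,\mathrm{diag}(\boldsymbol{D},\boldsymbol{S}_m)\,\boldsymbol{U}$, where $\boldsymbol{L}$ and $\boldsymbol{U}$ are the nonsingular factors defined in \eqref{eq:multilevel-ILU}. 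Then it suffices to check that $\boldsymbol{A}\boldsymbol{G}\boldsymbol{A}=\boldsymbol{A}$ for the $\boldsymbol{G}$ in \eqref{eq:mlilu-pseudoinverse}, which per Definition~\ref{def:generalized-inverse} is exactly the statement to prove.

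Next, the key intermediate step is to verify that the matrix
$\boldsymbol{S}_m^g := \boldsymbol{P}\begin{bmatrix}\boldsymbol{R}_1^{-1} & \boldsymbol{0}\\ \boldsymbol{0}&\boldsymbol{0}\end{bmatrix}\boldsymbol{Q}^T$
built from the RRQR factorization \eqref{eq:QRCP-Schur-Complement} is itself a generalized inverse of $\boldsymbol{S}_m$. This is a direct computation: substituting $\boldsymbol{S}_m = \boldsymbol{Q}\begin{bmatrix}\boldsymbol{R}_1 & \boldsymbol{R}_2\\ \boldsymbol{0}& \boldsymbol{0}\end{bmatrix}\boldsymbol{P}^T$ and using $\boldsymbol{P}^T\boldsymbol{P}=\boldsymbol{I}$, $\boldsymbol{Q}^T\boldsymbol{Q}=\boldsymbol{I}$, and the nonsingularity of $\boldsymbol{R}_1$ gives $\boldsymbol{S}_m\boldsymbol{S}_m^g = \boldsymbol{Q}\begin{bmatrix}\boldsymbol{I}&\boldsymbol{0}\\ \boldsymbol{0}&\boldsymbol{0}\end{bmatrix}\boldsymbol{Q}^T$, and a second multiplication by $\boldsymbol{S}_m$ on the right recovers $\boldsymbol{S}_m$.

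Finally, I would assemble the pieces using the block-diagonal structure. Because $\boldsymbol{A}=\boldsymbol{M}=\boldsymbol{L}\,\mathrm{diag}(\boldsymbol{D},\boldsymbol{S}_m)\,\boldsymbol{U}$ and $\boldsymbol{G}=\boldsymbol{U}^{-1}\,\mathrm{diag}(\boldsymbol{D}^{-1},\boldsymbol{S}_m^g)\,\boldsymbol{L}^{-1}$, the product telescopes as
\begin{equation*}
\boldsymbol{A}\boldsymbol{G}\boldsymbol{A}
= \boldsymbol{L}\begin{bmatrix}\boldsymbol{D}\boldsymbol{D}^{-1}\boldsymbol{D} & \\ & \boldsymbol{S}_m\boldsymbol{S}_m^g\boldsymbol{S}_m\end{bmatrix}\boldsymbol{U}
= \boldsymbol{L}\begin{bmatrix}\boldsymbol{D} & \\ & \boldsymbol{S}_m\end{bmatrix}\boldsymbol{U} = \boldsymbol{A},
\end{equation*}
using $\boldsymbol{D}\boldsymbol{D}^{-1}\boldsymbol{D}=\boldsymbol{D}$ (since $\boldsymbol{D}$ is a nonsingular diagonal matrix) and the generalized-inverse identity for $\boldsymbol{S}_m$ established in the previous step. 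By Definition~\ref{def:generalized-inverse}, this shows that $\boldsymbol{G}$ is a generalized inverse of $\boldsymbol{A}$.

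The main obstacle is essentially bookkeeping: one must be careful that the block $\mathrm{diag}(\boldsymbol{D},\boldsymbol{S}_m)$ is consistent with how $\boldsymbol{L}$ and $\boldsymbol{U}$ are assembled across all $m$ levels, and that the permutation/scaling factors inside $\boldsymbol{L}$ and $\boldsymbol{U}$ are nonsingular so that $\boldsymbol{L}^{-1}$ and $\boldsymbol{U}^{-1}$ in \eqref{eq:mlilu-pseudoinverse} are well defined. No serious technical difficulty arises beyond this; the substantive observation is simply that replacing $\boldsymbol{S}_m^{-1}$ (which does not exist) by the RRQR-based $\boldsymbol{S}_m^g$ preserves the $\boldsymbol{A}\boldsymbol{G}\boldsymbol{A}=\boldsymbol{A}$ identity at the innermost block, and the outer LU framework passes this property through unchanged.
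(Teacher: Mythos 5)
Your proposal is correct and follows essentially the same route as the paper's proof: use the no-dropping hypothesis to make the multilevel factorization exact, verify via the QRCP structure that $\boldsymbol{S}_{m}\boldsymbol{S}_{m}^{g}\boldsymbol{S}_{m}=\boldsymbol{S}_{m}$ (the paper writes $\boldsymbol{S}_{m}\boldsymbol{S}_{m}^{g}=\hat{\boldsymbol{Q}}\hat{\boldsymbol{Q}}^{T}$ and then $\hat{\boldsymbol{Q}}\hat{\boldsymbol{Q}}^{T}\boldsymbol{S}_{m}=\boldsymbol{S}_{m}$, which is the same computation), and telescope the block-diagonal product to conclude $\boldsymbol{A}\boldsymbol{G}\boldsymbol{A}=\boldsymbol{A}$.
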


\begin{proof}
Suppose $\boldsymbol{D}\in\mathbb{R}^{d\times d}$. By construction,
\begin{equation}
\boldsymbol{A}\boldsymbol{G}=\boldsymbol{L}\begin{bmatrix}\boldsymbol{D}\\
 & \boldsymbol{S}_{m}
\end{bmatrix}\boldsymbol{U}\boldsymbol{U}^{-1}\begin{bmatrix}\boldsymbol{D}^{-1}\\
 & \boldsymbol{S}_{m}^{g}
\end{bmatrix}\boldsymbol{L}^{-1}=\boldsymbol{L}\begin{bmatrix}\boldsymbol{I}_{d}\\
 & \boldsymbol{S}_{m}\boldsymbol{S}_{m}^{g}
\end{bmatrix}\boldsymbol{L}^{-1}.\label{eq:similarity-transformation}
\end{equation}
Hence, the eigenvalues of $\boldsymbol{A}\boldsymbol{G}$ have the
same multiplicities as those in $\begin{bmatrix}\boldsymbol{I}_{d}\\
 & \boldsymbol{S}_{m}\boldsymbol{S}_{m}^{g}
\end{bmatrix}$, where $\boldsymbol{S}_{m}=\boldsymbol{Q}\begin{bmatrix}\boldsymbol{R}_{1} & \boldsymbol{R}_{2}\\
 & \boldsymbol{0}
\end{bmatrix}\boldsymbol{P}^{T}$. Note that $\boldsymbol{S}_{m}^{g}=\hat{\boldsymbol{P}}\boldsymbol{R}_{1}^{-1}\hat{\boldsymbol{Q}}^{T}=\boldsymbol{P}\begin{bmatrix}\boldsymbol{R}_{1}^{-1} & \boldsymbol{0}\\
\boldsymbol{} & \boldsymbol{0}
\end{bmatrix}\boldsymbol{Q}^{T}$, where $\hat{\boldsymbol{P}}$ and $\hat{\boldsymbol{Q}}$ are composed
of the first $r$ columns of $\boldsymbol{P}$ and $\boldsymbol{Q}$.
Then, 
\begin{equation}
\boldsymbol{S}_{m}\boldsymbol{S}_{m}^{g}=\boldsymbol{Q}\begin{bmatrix}\boldsymbol{R}_{1} & \boldsymbol{R}_{2}\\
\boldsymbol{} & \boldsymbol{0}
\end{bmatrix}\begin{bmatrix}\boldsymbol{R}_{1}^{-1} & \boldsymbol{0}\\
 & \boldsymbol{0}
\end{bmatrix}\boldsymbol{Q}^{T}=\boldsymbol{Q}\begin{bmatrix}\boldsymbol{I}_{d}\\
 & \boldsymbol{0}
\end{bmatrix}\boldsymbol{Q}^{T}=\hat{\boldsymbol{Q}}\hat{\boldsymbol{Q}}^{T}\label{eq:pseudoinverse-eigenvalues}
\end{equation}
and 
\begin{equation}
\boldsymbol{A}\boldsymbol{G}\boldsymbol{A}=\boldsymbol{L}\begin{bmatrix}\boldsymbol{I}_{d}\\
 & \hat{\boldsymbol{Q}}\hat{\boldsymbol{Q}}^{T}
\end{bmatrix}\boldsymbol{L}^{-1}\boldsymbol{L}\begin{bmatrix}\boldsymbol{D}\\
 & \boldsymbol{S}_{m}
\end{bmatrix}\boldsymbol{U}=\boldsymbol{L}\begin{bmatrix}\boldsymbol{D}\\
 & \hat{\boldsymbol{Q}}\hat{\boldsymbol{Q}}^{T}\boldsymbol{S}_{m}
\end{bmatrix}\boldsymbol{U}=\boldsymbol{A}.\label{eq:AGA}
\end{equation}
Therefore, $\boldsymbol{G}$ is a generalized inverse of $\boldsymbol{A}$.
\end{proof}
\begin{remark} Lemma~\ref{lem:range-preservation} also holds if
we replace QRCP with a different rank-revealing decomposition (such
as TSVD \cite[p. 291]{Golub13MC}) on the Schur complement $\boldsymbol{S}_{m}$.
We use QRCP for its efficiency compared to TSVD, in case the final
Schur complement is relatively large. \end{remark}

In practice, dropping is required for efficiency. For $\boldsymbol{G}$
to be $\epsilon$-accurate with dropping, all the factors must be
stable. To this end, we impose additional safeguards on the scaling
factors from MC64 for singular systems. In particular, we consider
the scaling factors of a row and its corresponding column unstable
if their ratios with the maximum entries in the row and column deviate
too much from unity (e.g., their ratio exceeds $1000$). After identifying
the unstable factors, we defer their corresponding rows and columns
to the next level along with the zero diagonals during static pivoting.
In terms of the Schur complement $\boldsymbol{S}_{m}$, we switch
to using QRCP if $\boldsymbol{S}_{m}$ is small enough or static deferring
defers virtually all rows and columns.  We assert the following convergence
property of GMRES with HIF.
\begin{corollary}
\label{cor:HIF-convergence} With sufficiently small droppings in
HIF, $\boldsymbol{G}$ in \eqref{eq:mlilu-pseudoinverse} is an $\epsilon$-accurate
AGI, and GMRES with RPO $\boldsymbol{G}$ does not break down before
finding an LS solution of \eqref{eq:least-squares-linear-system}
for all $\boldsymbol{b}\in\mathcal{R}(\boldsymbol{A})$ and $\boldsymbol{x}_{0}\in\mathbb{R}^{n}$
in exact arithmetic.
\end{corollary}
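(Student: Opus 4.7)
The plan is to leverage Lemma~\ref{lem:range-preservation} as the ``ideal'' limit and show that HIF with sufficiently small droppings is a small perturbation of this ideal, which suffices to place it inside the $\epsilon$-accurate class of Definition~\ref{def:epsilon-accurate}. Convergence of GMRES then follows immediately from Theorem~\ref{thm:convergence}.

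Concretely, let $\boldsymbol{L}_{*}$, $\boldsymbol{U}_{*}$, $\boldsymbol{D}_{*}$, and $\boldsymbol{S}_{m,*}$ denote the factors produced by running HIF with the same pivoting, scaling, and deferring choices but with all drop tolerances set to zero, and let $\boldsymbol{G}_{*}$ be the corresponding operator from \eqref{eq:mlilu-pseudoinverse}. By Lemma~\ref{lem:range-preservation}, $\boldsymbol{G}_{*}$ is an exact generalized inverse of $\boldsymbol{A}$, and combining \eqref{eq:similarity-transformation} with \eqref{eq:pseudoinverse-eigenvalues} gives $\boldsymbol{A}\boldsymbol{G}_{*}=\boldsymbol{L}_{*}\begin{bmatrix}\boldsymbol{I}_{d}\\ & \hat{\boldsymbol{Q}}_{*}\hat{\boldsymbol{Q}}_{*}^{T}\end{bmatrix}\boldsymbol{L}_{*}^{-1}$. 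Since $\hat{\boldsymbol{Q}}_{*}\hat{\boldsymbol{Q}}_{*}^{T}$ is an orthogonal projector of rank $n_{s}=\mathrm{rank}(\boldsymbol{S}_{m,*})$, it is unitarily similar to $\begin{bmatrix}\boldsymbol{I}_{n_{s}}\\ & \boldsymbol{0}\end{bmatrix}$, so setting $\boldsymbol{X}=\boldsymbol{L}_{*}\,\mathrm{diag}(\boldsymbol{I}_{d},\boldsymbol{Q}_{*}\boldsymbol{\Pi})$ for a suitable permutation $\boldsymbol{\Pi}$ yields $\boldsymbol{X}^{-1}\boldsymbol{A}\boldsymbol{G}_{*}\boldsymbol{X}=\begin{bmatrix}\boldsymbol{I}_{r}\\ & \boldsymbol{0}\end{bmatrix}$ with $r=\mathrm{rank}(\boldsymbol{A})$.

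Next, I would argue a continuity/perturbation step: for fixed pivoting and deferring decisions, the HIF factors $\boldsymbol{L}$, $\boldsymbol{U}$, $\boldsymbol{D}$, and $\boldsymbol{S}_{m}$ depend continuously on the dropped entries, so $\Vert\boldsymbol{A}\boldsymbol{G}-\boldsymbol{A}\boldsymbol{G}_{*}\Vert$ can be made arbitrarily small by tightening the drop thresholds. Using the same $\boldsymbol{X}$ as above,
\begin{equation*}
\bigl\Vert\boldsymbol{X}^{-1}\boldsymbol{A}\boldsymbol{G}\boldsymbol{X}-\begin{bmatrix}\boldsymbol{I}_{r}\\ & \boldsymbol{0}\end{bmatrix}\bigr\Vert_{2}=\bigl\Vert\boldsymbol{X}^{-1}(\boldsymbol{A}\boldsymbol{G}-\boldsymbol{A}\boldsymbol{G}_{*})\boldsymbol{X}\bigr\Vert_{2}\le\kappa(\boldsymbol{X})\Vert\boldsymbol{A}\boldsymbol{G}-\boldsymbol{A}\boldsymbol{G}_{*}\Vert_{2}.
\end{equation*}
The safeguards described in section~\ref{subsec:Hybridizing-multilevel-ILU} (static deferral of zero diagonals, dynamic deferral on ill-conditioned pivots, bounded MC64 scaling ratios, and QRCP on a well-conditioned $\boldsymbol{R}_{1}$) control $\kappa(\boldsymbol{L}_{*})$ and $\kappa(\boldsymbol{Q}_{*})=1$, hence $\kappa(\boldsymbol{X})$ is bounded independently of the drop thresholds. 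Choosing the drop tolerances small enough makes the right-hand side strictly less than $1$, so $\boldsymbol{G}$ meets Definition~\ref{def:epsilon-accurate}.

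With $\boldsymbol{G}$ established as an $\epsilon$-accurate AGI, Theorem~\ref{thm:convergence} immediately gives the no-breakdown convergence of GMRES with RPO $\boldsymbol{G}$ to an LS solution for every $\boldsymbol{b}\in\mathcal{R}(\boldsymbol{A})$ and $\boldsymbol{x}_{0}\in\mathbb{R}^{n}$. The main obstacle I anticipate is the perturbation step: pivoting, deferring, and the numerical-rank decision in QRCP are discrete choices, so ``continuity'' of the factorization is not automatic. The clean way around this is to fix the pivoting/deferring pattern to the one selected in the no-drop run (they will agree once dropping is tight enough because the same pivots remain well-conditioned), and to assume that the final Schur complement $\boldsymbol{S}_{m,*}$ has a well-defined numerical rank (a gap between $\sigma_{n_{s}}$ and $\sigma_{n_{s}+1}$), so that QRCP identifies the same $n_{s}$ for all sufficiently small perturbations. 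These are precisely the qualitative assumptions hidden in the phrase ``sufficiently small droppings,'' and with them the proof reduces to the three lines above.
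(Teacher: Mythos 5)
Your proposal follows essentially the same route as the paper's proof: both take the no-dropping factorization (Lemma~\ref{lem:range-preservation}) as the exact generalized inverse, use the similarity transformation $\boldsymbol{X}$ built from $\boldsymbol{L}$ and the orthogonal factor of the final Schur complement, bound the quantity in Definition~\ref{def:epsilon-accurate} by $\kappa(\boldsymbol{X})$ times the dropping-induced perturbation, and conclude via Theorem~\ref{thm:convergence}. Your explicit caveats about fixing the pivoting/deferring pattern and the numerical rank of $\boldsymbol{S}_{m}$ only make precise what the paper leaves implicit in the phrase ``sufficiently small droppings''; the argument is otherwise the same.
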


\begin{proof}
Let $\boldsymbol{A}^{g}$ be the generalized inverse of $\boldsymbol{A}$
with the same permutation and scaling factors as $\boldsymbol{G}$.
Let $\boldsymbol{A}\boldsymbol{A}^{g}=\boldsymbol{X}\begin{bmatrix}\boldsymbol{I}_{r}\\
 & \boldsymbol{0}
\end{bmatrix}\boldsymbol{X}^{-1}$, where $r=\text{rank}(\boldsymbol{A})$ and $\boldsymbol{X}=\boldsymbol{L}\begin{bmatrix}\boldsymbol{I}_{d}\\
 & \boldsymbol{Q}_{m}
\end{bmatrix}$ as in \eqref{eq:AGA}. Then, 
\begin{align}
\left\Vert \boldsymbol{X}^{-1}\boldsymbol{A}\boldsymbol{G}\boldsymbol{X}-\begin{bmatrix}\boldsymbol{I}_{r}\\
 & \boldsymbol{0}
\end{bmatrix}\right\Vert  & =\left\Vert \boldsymbol{X}^{-1}\left(\boldsymbol{A}\boldsymbol{G}-\boldsymbol{A}\boldsymbol{A}^{g}\right)\boldsymbol{X}\right\Vert \leq\kappa(\boldsymbol{X})\Vert\boldsymbol{A}\Vert\Vert\boldsymbol{G}-\boldsymbol{A}^{g}\Vert,\label{eq:bound-epsilon-accuracy}
\end{align}
which is bounded by $1$ for sufficiently small droppings because
all the components in HIF have an $\mathcal{O}(1)$ condition number.
Convergence then follows from Theorem~\ref{thm:convergence}.
\end{proof}
In terms of the implementation, QRCP is available in LAPACK \cite{anderson1999lapack}
as \textsf{xGEQP3} (where x is s and d for single and double precision,
respectively). One needs to apply a post-processing step to truncate
the rightmost columns to resolve ill-conditioning, which can be done
incrementally by using Bischof's algorithm \cite{bischof1990incremental}
to estimate $\kappa(\boldsymbol{R}_{1:k,1:k})$, e.g., using \textsf{xLAIC1
}in LAPACK. We found that it suffices to use $10^{10}$ in double
precision (approximately $\epsilon_{\text{mach}}^{-2/3}$) as the
upper bound on $\kappa(\boldsymbol{R}_{1:k,1:k})$. We will present
numerical results in section~\ref{subsec:Solution-of-consistent}.

\section{\label{sec:HIFIR}HIF with iterative refinement}

Iterative refinement is a well-known technique in direct and iterative
solvers for ill-conditioned systems \cite{arioli2009using,carson2017new,Golub13MC,skeel1980iterative}.
In this section, we use it with HIF to construct variable preconditioners
for the computation of null-space vectors in section~\ref{sec:Application-to-null-space}.

Given an AGI, i.e., $\boldsymbol{G}\approx\boldsymbol{A}^{g}$, starting
from $\boldsymbol{x}_{0}\in\mathbb{R}^{n}$, we refine the solution
iteratively by obtaining $\boldsymbol{x}_{j}$ for $j=1,2,\dots$
as
\begin{align}
\boldsymbol{r}_{j-1} & =\boldsymbol{b}-\boldsymbol{A}\boldsymbol{x}_{j-1},\label{eq:residual}\\
\boldsymbol{x}_{j} & =\boldsymbol{G}\boldsymbol{r}_{j-1}+\boldsymbol{x}_{j-1}.\label{eq:iter-refinement}
\end{align}
This process can be interpreted as a fixed-point iteration, i.e.,
\begin{equation}
\boldsymbol{x}_{j}=\boldsymbol{G}(\boldsymbol{b}-\boldsymbol{A}\boldsymbol{x}_{j-1})+\boldsymbol{x}_{j-1}=(\boldsymbol{I}-\boldsymbol{G}\boldsymbol{A})\boldsymbol{x}_{j-1}+\boldsymbol{G}\boldsymbol{b}.\label{eq:fixed-point-iteration}
\end{equation}
We refer to the process as \emph{HIF with iterative refinement}, or
\emph{HIFIR}.

\begin{remark}Fixed-point iterations are often referred to as \emph{stationary
iterative methods}. Some classical methods for singular systems include
Karcmarz's relaxation for consistent systems \cite{Karczmarz1937}
and Tanabe's extension for inconsistent systems \cite{Tanabe1971}
(see also \cite{Popa1995}). Stationary iterations typically have
a splitting $\boldsymbol{A}=\boldsymbol{M}-\boldsymbol{N}$ (or $\boldsymbol{A}^{T}\boldsymbol{A}=\boldsymbol{M}-\boldsymbol{N}$
if applied to the normal equation \cite{Elfving1998}). When $\boldsymbol{G}$
is nonsingular, HIFIR is a stationary iteration with $\boldsymbol{M}=\boldsymbol{G}^{-1}$.
When $\boldsymbol{G}$ is singular, in general HIFIR is not a stationary
iteration with a simple splitting. \end{remark}

As a standalone solver, HIFIR converges for all $\boldsymbol{x}_{0}\in\mathbb{R}^{n}$
if and only if the spectral radius of its iteration matrix, i.e.,
$\rho(\boldsymbol{I}-\boldsymbol{G}\boldsymbol{A})$, is less than
1. If $\boldsymbol{A}$ is singular, then $\boldsymbol{G}\boldsymbol{A}$
has a zero eigenvalue and $\rho(\boldsymbol{I}-\boldsymbol{G}\boldsymbol{A})\geq1$,
so HIFIR does not converge in general. For example, if $\boldsymbol{x}_{0}$
has a nonzero component in $\mathcal{N}(\boldsymbol{G}\boldsymbol{A})\backslash\mathcal{R}(\boldsymbol{G})\backslash\{\boldsymbol{0}\}$,
then this component may not diminish during the iterative refinement.
Nevertheless, HIFIR converges for some cases. For completeness, we
present an analysis in Proposition~\ref{prop:iterative-refinement}.
For systems that satisfy Proposition~\ref{prop:iterative-refinement},
KSP methods would converge in just one iteration with a sufficiently
large number of iterations in HIFIR. Hence, HIFIR may improve the
accuracy of the preconditioner with a similar effect as tightening
the dropping thresholds in HIF, without increasing memory usage. However,
 too many refinement steps may slow down convergence. In the worst
case, if $\boldsymbol{G}$ is very close to a generalized inverse
$\boldsymbol{A}^{g}$ where $\boldsymbol{A}\boldsymbol{A}^{g}$ is
range asymmetric, then FGMRES may converge to a WLS (instead of an
LS) solution for singular systems as in Theorem~\ref{thm:weighted-least-squares}.
This phenomenon is analogous to the ``over-solve'' of the inner
iterations in inexact Newton's methods \cite{tuminaro2002backtracking},
so we also refer to it as \emph{over-solve}.   

In terms of implementation, at the $k$th step of FGMRES, given the
last vector $\boldsymbol{q}$ from the generalized Arnoldi process,
HIFIR solves $\boldsymbol{A}\boldsymbol{x}_{j}\approx\boldsymbol{q}$
iteratively starting with $\boldsymbol{x}_{0}=\boldsymbol{0}$, and
the iterative-refinement process (i.e., the inner iteration) repeats
until $\left\Vert \boldsymbol{r}_{j}\right\Vert /\left\Vert \boldsymbol{r}_{0}\right\Vert \not\in[\beta_{L},\beta_{U}]$
or $j=\text{maxiter}$. Here, the $\beta$ values guard against over-solve
for converging and diverging cases, respectively. In practice, we
found that $\beta_{L}=0.2$ and $\beta_{U}=100$ worked reasonably
well. For $\text{maxiter}$, we start with 16 and double it iteratively
after each restart of restarted FGMRES.

The convergence conditions in Proposition~\ref{prop:iterative-refinement}
are very restrictive. In addition, HIFIR is not robust as a variable
preconditioner for FGMRES for solving inconsistent systems because
it cannot introduce new components into the FKSP beyond those in the
KSP with $\boldsymbol{G}$ as a fixed preconditioner, except for the
random perturbations from rounding errors. To avoid these nonrobustness
issues, we shift our attention to its use as a variable preconditioner
in the context of computing null-space vectors.

\section{\label{sec:Application-to-null-space}Application to null-space vectors
and pseudoinverse solutions}

In this section, we apply HIFIR-preconditioned FGMRES as the core
component in solving two challenging linear-algebra problems, namely
computing multiple null-space vectors of a singular matrix and computing
the pseudoinverse solution of an inconsistent linear system.

\subsection{\label{subsec:finding-null-spaces} Computation of null-space vectors}

In this application, we compute orthonormal basis vectors of the null
space of a matrix $\boldsymbol{A}$ to near machine precision. This
problem is essential in the context of finding an eigenvector (or
a singular vector) from a given eigenvalue (or a singular value) \cite[Section 5]{choi2006iterative},
which is fundamental in mathematics and physics \cite{denton2021eigenvectors}.
It will also enable us to compute the pseudoinverse solutions for
systems with low-dimensional null spaces in section~\ref{subsec:pipit}.
This problem is challenging due to potential ill-conditioning when
there are near-$\epsilon_{\text{mach}}$ eigenvalues or when the angles
between the left and right null spaces are large, as implications
of the perturbation theory of eigenvectors \cite[Section 7.2]{Golub13MC}. 

To compute a set of orthonormal basis vectors of the null space, we
solve a series of inconsistent systems using HIFIR-preconditioned
FGMRES. Note that this algorithm involves a triple loop. We will use
$j$ and $k$ for the indices of the iterations in HIFIR and FGMRES,
respectively, which correspond to the inner-most and intermediate
loops, and we will use $i$ for the index of the null-space basis
function (i.e., the outermost loop). For the $i$th null-space basis
vector, we solve the LS system
\begin{equation}
\boldsymbol{y}_{i}=\arg\min_{\boldsymbol{y}}\left\Vert \tilde{\boldsymbol{A}}\boldsymbol{y}-\boldsymbol{b}_{i}\right\Vert ,\qquad\text{where}\qquad\tilde{\boldsymbol{A}}=\boldsymbol{A}\boldsymbol{\mathcal{G}}.\label{eq:preconditioned-space}
\end{equation}
We will address how to choose $\boldsymbol{b}_{i}$ momentarily. Let
$\boldsymbol{V}_{i-1}$ be composed of the computed null-space basis
vectors up to the $(i-1)$st iteration, starting from $\boldsymbol{V}_{0}=\emptyset$.
Then, 
\begin{equation}
\boldsymbol{x}_{i}=(\boldsymbol{I}-\boldsymbol{V}_{i-1}\boldsymbol{V}_{i-1}^{T})\boldsymbol{\mathcal{G}}\boldsymbol{y}_{i}\label{eq:orthogonal-null-vector}
\end{equation}
is a new orthogonal basis vector of $\mathcal{N}(\boldsymbol{A})$,
and we append $\boldsymbol{v}_{i}=\boldsymbol{x}_{i}/\left\Vert \boldsymbol{x}_{i}\right\Vert $
to $\boldsymbol{V}_{i-1}$ to obtain $\boldsymbol{V}_{i}$. To find
a complete set of null-space vectors, we can repeat the outer-most
iteration until the \emph{null-space residual} $\boldsymbol{A}\boldsymbol{v}_{i+1}$
has a large norm relative to the norms of $\boldsymbol{A}$ and $\boldsymbol{v}_{i+1}$,
such as $\left\Vert \boldsymbol{A}\boldsymbol{v}_{i+1}\right\Vert _{1}\gg\epsilon_{\text{mach}}\left\Vert \boldsymbol{A}\right\Vert _{1}\left\Vert \boldsymbol{v}_{i+1}\right\Vert _{1}$.
Here, we use the 1-norm (instead of the 2-norm) for computational
efficiency; alternatively, the $\infty$-norm may also be used.  

For the above procedure to achieve machine precision, the most critical
parts are the choices of $\boldsymbol{\mathcal{G}}$ and $\boldsymbol{b}_{i}$.
In terms of $\boldsymbol{\mathcal{G}}$, we use HIFIR with QRCP without
truncation for the final Schur complement (to avoid division by zero,
it suffices to replace any zero diagonal in $\boldsymbol{R}$ from
QRCP with $\epsilon_{\text{mach}}r_{11}$). Disabling truncation may
sound counterintuitive, because it leads to an ill-conditioned HIF,
and both HIFIR and FGMRES may diverge. Due to submultiplicity of norms,
a very large $\left\Vert \boldsymbol{y}_{i}\right\Vert $ and a small
$\left\Vert \boldsymbol{A}\boldsymbol{\mathcal{G}}\boldsymbol{y}_{i}\right\Vert $
imply that $\boldsymbol{y}_{i}$ is approximately in $\mathcal{N}(\boldsymbol{A}\boldsymbol{\mathcal{G}})$
and $\boldsymbol{x}_{i}$ is approximately in $\mathcal{N}(\boldsymbol{A})$.
Hence, our goal is indeed to make FGMRES diverge as quickly as possible.
  In terms of $\boldsymbol{b}_{i}$, we start with a set of orthonormal
vectors $\{\boldsymbol{q}_{i}\}$. For each $\boldsymbol{q}_{i}$,
if the final Schur complement $\boldsymbol{S}_{m}$ in HIF is ill-conditioned
(e.g., $\kappa(\boldsymbol{S}_{m})>10^{10}$), we use $\boldsymbol{q}_{i}$
as $\boldsymbol{b}_{i}$ directly; otherwise, we apply a few iterations
of HIFIR with a large $\beta_{U}$ ($\beta_{U}=10^{8}$) on $\boldsymbol{q}_{i}$
to obtain $\boldsymbol{b}_{i}$ to accelerate the convergence (or
divergence) of the null-space vector.

This triple-loop algorithm is highly nonlinear, so it is difficult
to analyze the whole process rigorously. To develop a heuristic justification,
let us assume that $\boldsymbol{A}$ is nonsingular to machine precision
(i.e., $\kappa(\boldsymbol{A})=\text{\ensuremath{\sigma_{1}}(\ensuremath{\boldsymbol{A}})/\ensuremath{\sigma_{n}}(\ensuremath{\boldsymbol{A}})=}\mathcal{O}(1/\epsilon_{\text{mach}})$)
with numerical null space $\tilde{\mathcal{N}}(\boldsymbol{A})$ and
$\boldsymbol{A}\boldsymbol{G}$ is nearly symmetric. In this case,
if $\left\Vert \boldsymbol{x}_{j}\right\Vert \gg\left\Vert \boldsymbol{b}\right\Vert $
in \eqref{eq:fixed-point-iteration}, HIFIR is analogous to power
iterations on $\boldsymbol{I}-\boldsymbol{G}\boldsymbol{A}$. Hence,
applying HIFIR on $\boldsymbol{q}_{i}$ makes the solution vector
approximately parallel to a dominant eigenvector of $\boldsymbol{G}\boldsymbol{A}$,
or a least-dominant eigenvector of $\boldsymbol{A}^{-1}\boldsymbol{G}^{-1}$.
If $\boldsymbol{G}\approx\boldsymbol{A}^{-1}$, $\boldsymbol{q}_{i}$
is approximately in $\tilde{\mathcal{N}}(\boldsymbol{A})$, and $\mathcal{K}_{k}(\boldsymbol{A},\boldsymbol{v},\boldsymbol{\mathcal{G}}_{k})$
with HIFIR as a variable preconditioner would be nearly parallel to
$\tilde{\mathcal{N}}(\boldsymbol{A})$. As a result, the computed
null-space vector will be less ``noisy'' than using a random $\boldsymbol{b}_{i}$.
However, if $\kappa(\boldsymbol{G})\gg\mathcal{O}(1/\epsilon_{\text{mach}})$
for $\boldsymbol{G}$ in \eqref{eq:mlilu-pseudoinverse}, which may
happen when $\kappa(\boldsymbol{S}_{m})\gg\mathcal{O}(1/\epsilon_{\text{mach}})$,
then the HIFIR-preconditioned GMRES may converge rapidly even with
a random $\boldsymbol{b}_{i}$. In this case, we do not apply iterative
refinement on $\boldsymbol{q}_{i}$ to avoid over-solve. Note that
our algorithm computes $\boldsymbol{b}_{i}$ starting from orthogonal
vectors, because if the $\boldsymbol{b}_{i}$ were (nearly) parallel
to begin with, the computed null-space vectors would have been (nearly)
parallel, and the orthogonalization step in \eqref{eq:orthogonal-null-vector}
would have been dominated by rounding (or cancellation) errors.

We note three algorithmic details. First, we need to introduce new
stopping criteria in FGMRES. We terminate FGMRES when $\boldsymbol{A}\boldsymbol{x}_{k}$
has reached the desired threshold, or $\boldsymbol{A}\boldsymbol{x}_{k}$
has stagnated after $\left\Vert \boldsymbol{A}\boldsymbol{x}_{k}\right\Vert $
is small enough (e.g., $\left\Vert \boldsymbol{A}\boldsymbol{x}_{k}\right\Vert _{1}<10^{-11}\left\Vert \boldsymbol{A}\right\Vert _{1}\left\Vert \boldsymbol{x}_{k}\right\Vert _{1}$,
where the 1-norm is used for computational efficiency). Note that
$\boldsymbol{A}\boldsymbol{x}_{k}$ is not a side product in FGMRES
and must be computed explicitly. For efficiency, we monitor the Hessenberg
matrix (i.e., $\boldsymbol{H}_{k}$) in the Arnoldi process as we
do for $\boldsymbol{S}_{m}$ in HIF, and we compute $\boldsymbol{A}\boldsymbol{x}_{k}$
only if $\boldsymbol{S}_{m}$ is numerically singular or $\boldsymbol{H}_{k}$
is ill-conditioned (e.g., $\kappa(\boldsymbol{H}_{k})>10^{6}$). Second,
we change the Gram--Schmidt orthogonalization in the Arnoldi process
to use Householder QR as described in \cite[Algorithm 6.10]{saad2003iterative},
since the loss of orthogonality from Gram--Schmidt can prevent the
solution from reaching machine precision. Third, for the same reason,
we use Householder QR in \eqref{eq:orthogonal-null-vector}. We will
report numerical results in section~\ref{subsec:Numerical-comparison-of}. 

\subsection{\label{subsec:pipit}Applications to pseudoinverse solutions of PDEs}

FGMRES+HIFIR as described in section~\ref{subsec:finding-null-spaces}
cannot solve inconsistent systems accurately because the solution
would, in general, converge (or diverge) to a null-space vector of
$\boldsymbol{A}$. We now describe an algorithm for computing the
pseudoinverse solution by combining the techniques in sections~\ref{sec:HIF}
and \ref{sec:HIFIR}, especially for large-scale singular systems
from well-posed PDE discretizations. Such systems often arise from
the Poisson equation with periodic or Neumann boundary conditions
\cite{LeVeque07FDM,bochev2005finite}, incompressible Navier--Stokes
(INS) equations with ``do-nothing'' boundary conditions on pressure
\cite{elman2014finite}, quasi-electrostatic problems with divergence-free
fields \cite{reddy2010finite}, or mechanical systems invariant of
translation and rotation \cite{Dhondt2004}.

\subsubsection{\label{subsec:PIPIT}Pseudoinverse solution via preconditioned iterative
method}

To obtain the pseudoinverse solutions, we propose a solver called
\emph{PIPIT}, which stands for \emph{pseudoinverse solver via preconditioned
iterations}. The algorithm proceeds as follows:
\begin{enumerate}
\item compute orthonormal basis vectors of $\mathcal{N}(\boldsymbol{A}^{T})$
iteratively using FGMERS+HIFIR on $\boldsymbol{A}^{T}$ as described
in section~\ref{subsec:finding-null-spaces}, and store the vectors
in $\boldsymbol{U}$;
\item solve $\boldsymbol{A}\boldsymbol{x}_{\text{LS}}=(\boldsymbol{I}-\boldsymbol{U}\boldsymbol{U}^{T})\boldsymbol{b}$
using GMRES+HIF as described in section~\ref{sec:HIF};
\item $\boldsymbol{x}_{\text{PI}}=(\boldsymbol{I}-\boldsymbol{V}\boldsymbol{V}^{T})\boldsymbol{x}_{\text{LS}}$,
where $\boldsymbol{V}$ contains the orthonormal basis vectors of
$\mathcal{N}(\boldsymbol{A})$.
\end{enumerate}
In step 3, if $\boldsymbol{V}$ is unknown \emph{a priori}, it can
be computed using FGMERS+HIFIR on $\boldsymbol{A}$.

This three-step algorithm is remarkably simple. It essentially converts
an inconsistent system into a consistent system in the first two steps
and then converts an LS solution from step 2 to the pseudoinverse
solution in step 3. Its logic is the same as using an RRQR to compute
a pseudoinverse solution of an RDLS system with known $\mathcal{N}(\boldsymbol{A})$
(e.g., due to range symmetry). However, PIPIT does not compute the
QR on the full matrix, and it can still work if $\mathcal{N}(\boldsymbol{A})$
is unknown or $\mathcal{N}(\boldsymbol{A})\neq\mathcal{N}(\boldsymbol{A}^{T})$.
It is worth noting that in \cite{brown1997gmres}, Brown and Walker
also suggested converting an inconsistent system into a consistent
system \emph{if possible}. However, such a strategy had not been practical
for large-scale sparse systems due to a lack of efficient algorithms
to compute $\mathcal{N}(\boldsymbol{A}^{T})$. The principal enabler
of PIPIT is FGMERS+HIFIR. It is worth noting that incomplete factorization
is often the most expensive part of PIPIT. A key feature of PIPIT
is that steps 1 and 2 reuse the same HIF of $\boldsymbol{A}$, except
that step 2 applies truncation to the $\boldsymbol{Q}$ and $\boldsymbol{R}$
factors of $\boldsymbol{S}_{m}$, but step 1 does not. If $\boldsymbol{V}$
needs to be computed explicitly in step 3, it would also reuse the
same HIF.

\subsubsection*{Comparison with other iterative pseudoinverse solvers}

In \cite{bjorck1979accelerated}, Bj\"orck and Elfving described
two algorithms for finding the pseudoinverse solution of inconsistent
systems. Both algorithms started by applying a CGLS-type method to
obtain $\boldsymbol{x}_{\text{LS}}$ for inconsistent systems, and
then they applied different CGLS-type methods to solve $\boldsymbol{A}\boldsymbol{x}_{\text{PI}}=\boldsymbol{A}\boldsymbol{x}_{\text{LS}}$
\cite[Algorithm 6.1]{bjorck1979accelerated} and $\boldsymbol{A}\boldsymbol{x}_{\text{LS}-\text{PI}}=\boldsymbol{0}$
\cite[Algorithm 6.2]{bjorck1979accelerated}, respectively. Unpreconditioned
LSQR \cite{paige1982lsqr} and LSMR \cite{fong2011lsmr} can solve
for the pseudoinverse solution of inconsistent systems in a single
pass; however, if a preconditioner is used, these methods only produce
a (weighted) LS solution, unless they also employ a second step analogous
to that of Bj\"orck and Elfving. These CGLS-type methods are independent
of the null-space dimension, but they converge slowly due to squaring
the condition number in the normal equation. In contrast, PIPIT does
not square the condition number, but it works better when the null
space is low dimensional. In \cite{bochev2005finite}, Bochev and
Lehoucq used the augmented Lagrangian method to solve singular systems
from Galerkin methods for the Poisson equation with Neumann boundary
conditions. They assumed \emph{a priori} knowledge about the null
spaces and did not address the issue of preconditioning. 

\subsubsection{\label{subsec:Properties-and-assumptions}Justification of low-dimensional
unknown null spaces}

PIPIT works efficiently only when the dimension of the unknown null
space is small. These assumptions are reasonable for many singular
systems from PDEs, as mentioned earlier. For these applications, if
the numerical discretization is well-posed, the right null space (RNS)
of the coefficient matrix is due to unconstrained modes in the continuum
formulation. Hence, the null-space dimension is a small constant,
independently of the number of unknowns. For the same reason, the
RNS of the coefficient matrices may often be derived \emph{a priori}
from the properties of the continuum PDEs. For example, in the case
of INS equations, the null space corresponds to the constant mode
in pressure \cite{elman2014finite}. However, the left null-space
(LNS) vectors are typically unknown if the system is range asymmetric,
where asymmetry may be due to non-self-adjoint differential operators
(such as advection-diffusion equations \cite{ern2013theory}), non-Galerkin
methods (such as finite differences \cite{LeVeque07FDM} and their
generalizations \cite{Urena20111849}), or some sophisticated boundary
treatments (such as some boundary conditions in INS \cite{Gresho1987}).
Even if the LNS can be computed using application-specific knowledge,
PIPIT offers a more user-friendly approach. In section~\ref{subsec:Numerical-comparison-of-PI},
we will present numerical results of PIPIT for two PDEs, namely advection-diffusion
equations and Navier's equations in linear elasticity.

\section{Numerical experiments and results\label{sec:Numerical-Experiments-and}}

In this section, we report some numerical results by applying HIF+GMRES
and HIFIR+FGMRES to solve singular systems. We have implemented HIFIR
and (F)GMRES in C++ by extending HILUCSI described in \cite{chen2021hilucsi}.
We conducted our tests on a single node of a cluster running CentOS
7.4 with two 2.5GHz 12-core Intel Xeon E5-2680v3 processors and 64GB
of RAM. We compiled HIFIR using the default GCC 4.8.5 compiler on
the system with the optimization option -O3. For accurate timing,
both turbo and power-saving modes were turned off for the processors.

\subsection{\label{subsec:Solution-of-consistent}Solution of consistent systems}

As a verification of the analysis for consistent systems in section~\ref{sec:HIF},
Table~\ref{tab:Examples-consistent-systems} compares HILUCSI and
HIF as the right preconditioner in GMRES(30) for four consistent systems
from the SuiteSparse Matrix Collection (aka the UFL Matrix Collection)
\cite{DavisHu11UFSPC}. We list the dimensions of the numerical null
spaces. Although \textsf{\small{}coater2 }is small, it is challenging
due to structural singularity (with 64 empty rows) and ill-conditioning.
For \textsf{shyy161}, the original system is consistent, so we used
its right-hand side RHS directly; for the others, the right-hand-side
vectors were either missing or inconsistent, so we constructed $\boldsymbol{b}$
as the row sums of $\boldsymbol{A}$ (i.e., $\boldsymbol{b}=\boldsymbol{A}\boldsymbol{1}$).
We used double precision for all the computations. Without dropping
in HIF, the relative residual of GMRES was less than $10^{-11}$ after
a single iteration for well-conditioned systems, confirming that HIF
is $\epsilon$-accurate. We then enabled dropping in the MLILU portion.
Table~\ref{tab:Examples-consistent-systems} reports the statistics
of the final Schur complement $\boldsymbol{S}_{m}$, the numbers of
GMRES iterations, and the relative residual. In the case of \textsf{shyy161},
for which $\boldsymbol{S}_{m}$ is singular, GMRES with HIF reached
about $10^{-13}$, but GMRES with HILUCSI stagnated with a relative
residual close to 1 (to the ninth decimal place) after 30 iterations.
Note that even though $\boldsymbol{S}_{m}$ is nonsingular for \textsf{bcsstk35},
HIF still improved the precision of the solution over HILUCSI by a
factor of 30 due to the better stability of QRCP over LU factorization.
As a reference, Figures~\ref{fig:bcsstk35} and \ref{fig:shyy161}
compare the convergence history of these preconditioned GMRES with
that of unpreconditioned GMRES for \textsf{bcsstk35} and \textsf{shyy161}
up to 1,000 iterations, respectively. Without preconditioning, GMRES(30)
only reached a relative residual of around $10^{-5}$ after 1,000
iterations.

\begin{table}
\caption{\label{tab:Examples-consistent-systems}Example results of HILUCSI
and HIF preconditioned GMRES(30) for consistent systems, with fill-in
ratio $\alpha=10$ and droptol $\sigma=10^{-4}$. $\text{d}(\mathcal{N})$
denotes the null-space dimension. \textsf{invextr1 }is shorthand for
\textsf{invextr1\_new}. The best results are in bold. For \textsf{coater2},
truncated QRCP could only achieve roughly $\sqrt{\epsilon_{\text{mach}}}$,
so the result with HIF was competitive. GMRES(30)+HILUCSI stagnated
for \textsf{shyy161} with a relative residual close to 1 (0.99999999954).}

\centering{}{\small{}}%
\begin{tabular}{c|c|c|c|c|c|c|c}
\hline 
\multirow{2}{*}{{\small{}Case ID}} & \multicolumn{3}{c|}{{\small{}Matrix info}} & \multicolumn{2}{c|}{{\small{}$\boldsymbol{S}_{m}$ in HIF}} & \multicolumn{2}{c}{{\small{}$\Vert\boldsymbol{r}\Vert$/$\Vert\boldsymbol{b}\Vert$}}\tabularnewline
\cline{2-8} \cline{3-8} \cline{4-8} \cline{5-8} \cline{6-8} \cline{7-8} \cline{8-8} 
 & \multirow{1}{*}{{\small{}$n$}} & \multirow{1}{*}{{\small{}nnz}} & \multirow{1}{*}{{\small{}$\text{d}(\mathcal{N})$}} & {\small{}$n$} & {\small{}$\text{d}(\mathcal{N})$} & {\small{}HILUCSI} & {\small{}HIF}\tabularnewline
\hline 
\textsf{\small{}coater2} & {\small{}9.540} & {\small{}207,308} & $\approx114$ & {\small{}3,411} & {\small{}106} & {\small{}NaN} & \textbf{\small{}1.9e-07}\tabularnewline
\hline 
\textsf{\small{}bcsstk35} & {\small{}30,237} & {\small{}1,450,163} & {\small{}6} & {\small{}1,004} & {\small{}0} & {\small{}2.1e-10} & \textbf{\small{}7.1e-12}\tabularnewline
\hline 
\textsf{\small{}invextr1} & {\small{}30,412} & {\small{}1,793,881} & {\small{}$\approx$2,910} & {\small{}3,273} & {\small{}0} & \textbf{\small{}2.3e-16} & {\small{}3.6e-16}\tabularnewline
\hline 
\textsf{\small{}shyy161} & {\small{}76,480} & {\small{}329,762} & {\small{}$\gtrsim50$} & {\small{}9,394} & {\small{}48} & {\small{}$\sim1$} & \textbf{\small{}1.1e-13}\tabularnewline
\hline 
\end{tabular}{\small\par}
\end{table}

\begin{figure}
\begin{minipage}[t]{0.48\columnwidth}%
\begin{center}
\includegraphics[width=0.9\columnwidth]{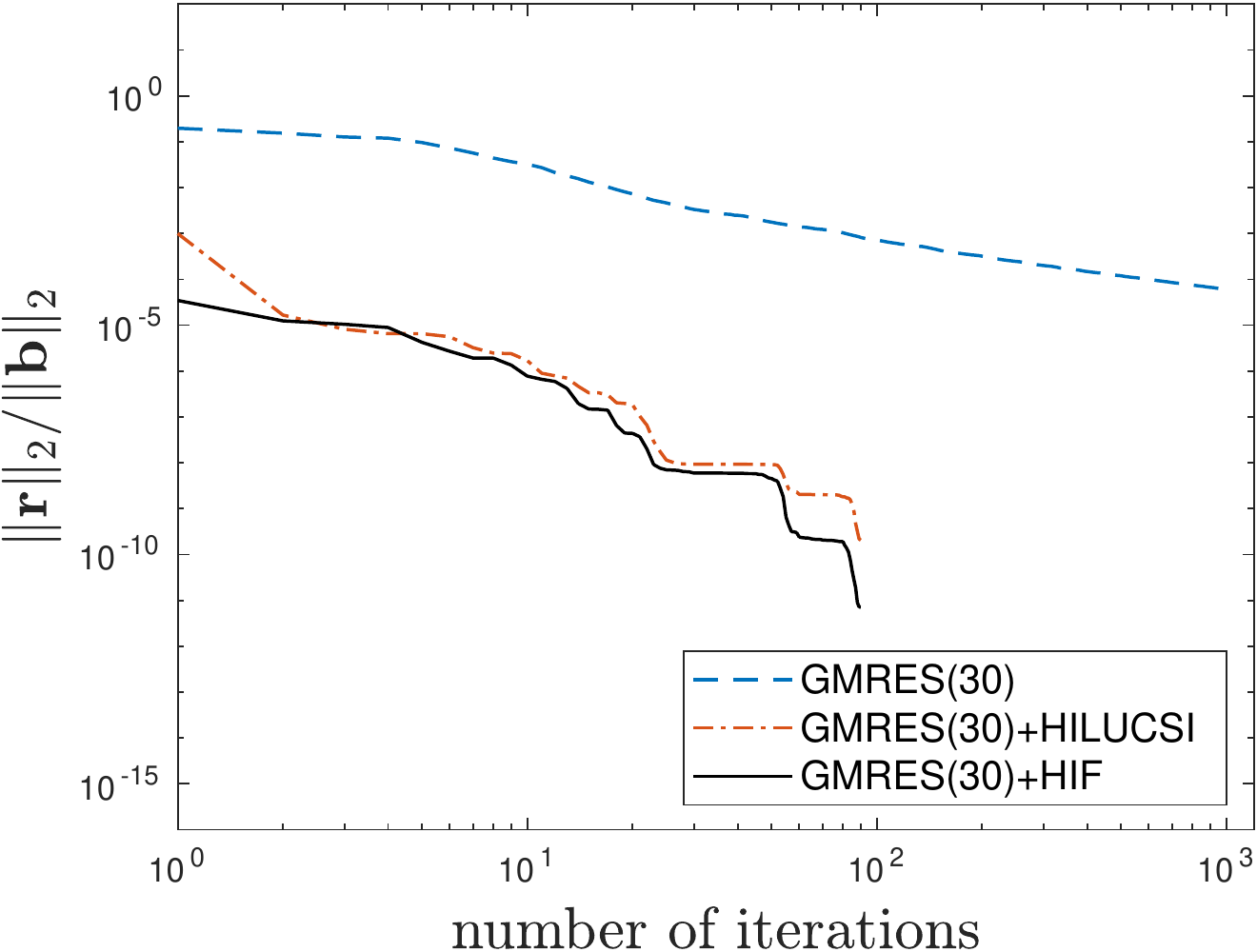}\caption{\label{fig:bcsstk35}Comparison of convergence history of preconditioned
GMRES(30) for \textsf{bcsstk35}{\footnotesize{}.}}
\par\end{center}%
\end{minipage}\hfill%
\begin{minipage}[t]{0.48\columnwidth}%
\begin{center}
\includegraphics[width=0.9\columnwidth]{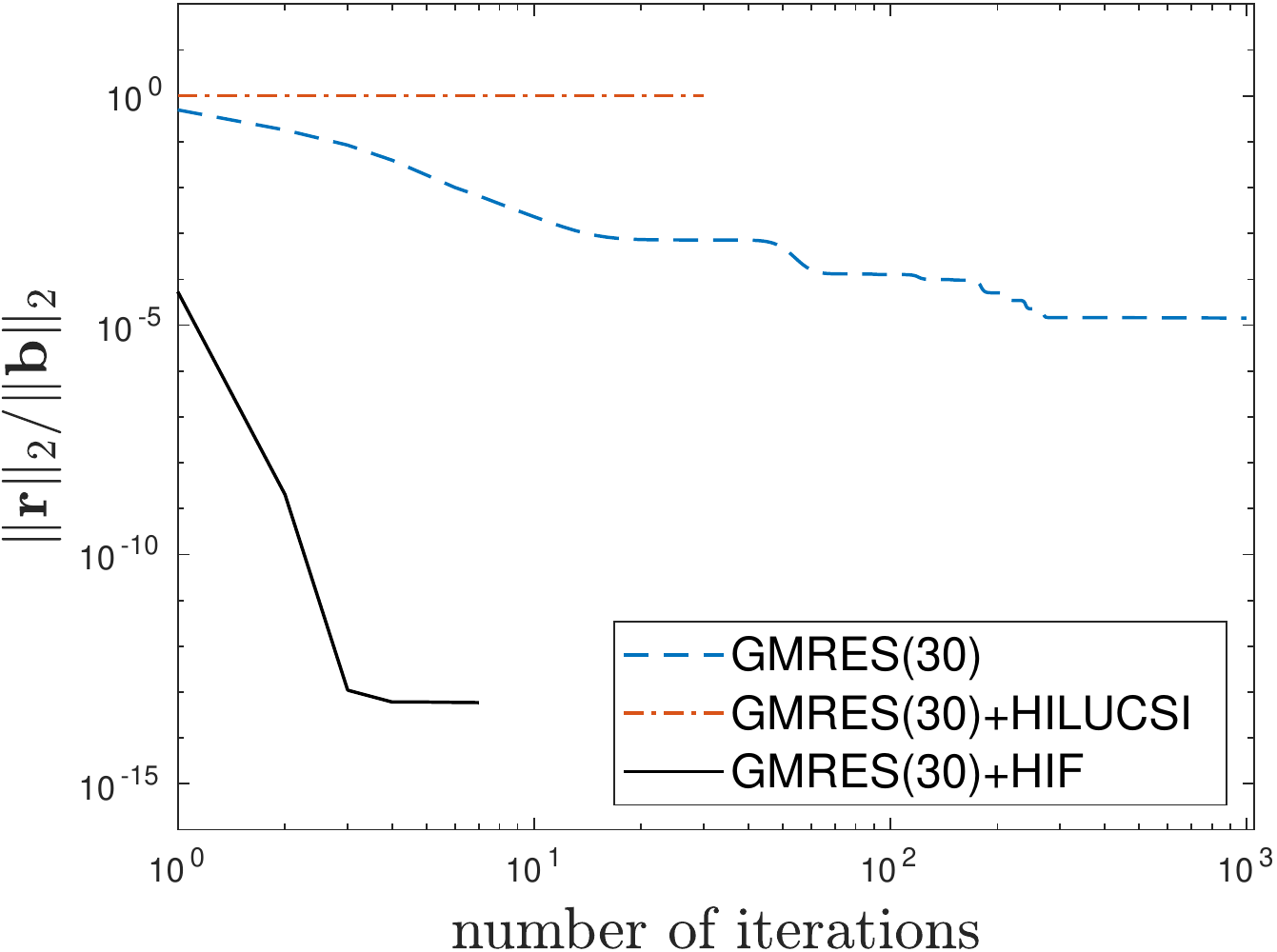}\caption{\label{fig:shyy161}Comparison of convergence history of preconditioned
GMRES(30) for \textsf{shyy161}{\footnotesize{}.}}
\par\end{center}%
\end{minipage}
\end{figure}

\subsection{\label{subsec:Numerical-comparison-of}Numerical comparison of null-space
computations}

To assess the accuracy and efficiency of HIFIR, we compare it with
sparse SVD \cite{baglama2005augmented} as implemented in the \textsf{svds}
function in MATLAB \cite{MATLAB2020a} as well as RIF-preconditioned
LSMR \cite{fong2011lsmr,benzi2003robust}. The logic of computing
null spaces using LSMR+RIF follows that of \textsf{MLSQRnull} \cite[Section 5.3.3]{choi2006iterative}:
Given $\boldsymbol{R}$ from RIF such that $\boldsymbol{A}^{T}\boldsymbol{A}\approx\boldsymbol{R}^{T}\boldsymbol{R}$
and a random $\boldsymbol{b}$ that is orthogonal to the previously
computed LNS vectors, $\boldsymbol{b}-\boldsymbol{A}\boldsymbol{x}_{\text{LS}}$
is a new LNS vector of $\boldsymbol{A}$, where $\boldsymbol{x}_{\text{LS}}=\boldsymbol{R}^{-1}\boldsymbol{y}_{\text{PI}}$
with $\boldsymbol{y}_{\text{PI}}\approx\left(\boldsymbol{A}\boldsymbol{R}^{-1}\right)^{+}\boldsymbol{b}$.
We used the suggested control parameters for LSMR to achieve maximal
precision. For RIF, we used the FORTRAN implementation of its implicit
left-looking algorithm \cite{scott2016preconditioning} with drop
tolerance 0.1 as in \cite{gould2017state}, and compiled it using
gfortran 4.8.5 with the -O3 option.

We first compared the algorithms for problems with a one-dimensional
null space. To this end, we used the MATLAB function \textsf{gallery('neumann',
$n$) }to construct three systems with $n=$$64^{2}$, $256^{2}$,
and $1024^{2}$, which correspond to the finite-difference discretization
of the two-dimensional Poisson equation with Neumann boundary conditions
on uniform grids of the corresponding sizes, respectively. The RNS
is known \emph{a priori }in that it corresponds to a constant mode,
and the LNS for this simple example also has a simple pattern. Nevertheless,
we computed both LNS and RNS in our tests. For HIFIR the LNS is computed
by applying the algorithm on $\boldsymbol{A}^{T}$ with $\boldsymbol{G}^{T}$
as the preconditioner. Note that LSMR requires two different preconditioners
for LNS and RNS, since $\boldsymbol{A}^{T}\boldsymbol{A}\neq\boldsymbol{A}\boldsymbol{A}^{T}$
in general. For \textsf{svds}, we used the command \textsf{{[}u,s,v{]}=svds(A,1,'smallest')}
to compute the smallest singular value and both of its corresponding
singular vectors in a single step. We conducted our comparison using
a single core on a system with 2.1GHz Intel Xeon E7-8870 v3 CPUs and
3TB of RAM.

Table~\ref{tab:neumann-null-spaces} summarizes the relative null-space
residuals as well as the runtimes of the methods. In terms of accuracy,
our proposed approach, namely FGMRES+HIFIR, consistently achieved
machine precision, while both LSMR+RIF and \textsf{svds }suffered
from severe loss of precision as the problem sizes increased. As a
result, for the $1024^{2}$ cases, our approach was about two to three
orders of magnitude more accurate than \textsf{svds} and LSMR, respectively.
Our approach was also one to two orders faster than the prior state
of the art, and its computational cost also grew at a much slower
pace. Its accuracy and efficiency are because (1) HIFIR is effective
at preconditioning singular matrices, and (2) FGMRES does not square
the condition number (unlike LSMR). In terms of memory, HIFIR used
107MB and 1.4GB for the medium and large Neumann systems, respectively,
which were roughly proportional to the input sizes. In contrast, RIF
used 110MB and 0.6GB for the two systems, which grew sublinearly.
This sublinear complexity indicates that RIF preserved fewer nonzeros
per row as the problem size grew. Since these systems have a constant
number of nonzeros per row independently of the problem sizes, such
increased dropping per row for larger systems limited the effectiveness
of RIF for large systems. In comparison, \textsf{svds }used 5.4GB
and 208GB for the two cases, respectively, about two orders of magnitude
larger than HIFIR.

\begin{table}
\caption{\label{tab:neumann-null-spaces}Example results of computing null
spaces. $\boldsymbol{v}$ denotes a unit-length null-space vector.
Relative residuals are in 2-norm, and the unit is machine epsilon
(i.e., $\epsilon_{\text{mach}}\approx2.22e-16$). The best results
are in bold.}

\centering{}{\small{}}%
\begin{tabular}{c|c|c|c|c|c|c|c|c|c}
\hline 
\multirow{3}{*}{{\small{}Case ID}} & \multirow{3}{*}{{\small{}$n$}} & \multicolumn{3}{c|}{{\small{}$\Vert\boldsymbol{A}\boldsymbol{v}\Vert/\Vert\boldsymbol{A}\Vert$
$(\epsilon_{\text{mach}})$}} & \multicolumn{5}{c}{{\small{}Runtime (s)}}\tabularnewline
\cline{3-10} \cline{4-10} \cline{5-10} \cline{6-10} \cline{7-10} \cline{8-10} \cline{9-10} \cline{10-10} 
 &  & {\scriptsize{}FGMRES} & {\scriptsize{}LSMR} & \multirow{2}{*}{\textsf{\small{}svds}} & \multicolumn{2}{c|}{{\scriptsize{}FGMRES+HIFIR}} & \multicolumn{2}{c|}{{\scriptsize{}LSMR+RIF}} & \multirow{2}{*}{\textsf{\small{}svds}}\tabularnewline
\cline{6-9} \cline{7-9} \cline{8-9} \cline{9-9} 
 &  & {\scriptsize{}+HIFIR} & {\scriptsize{}+RIF} &  & {\small{}fac.} & {\small{}sol.} & {\small{}fac.} & {\small{}sol.} & \tabularnewline
\hline 
{\small{}small-RNS} & \multirow{2}{*}{{\small{}$64^{2}$}} & \textbf{\small{}0.33} & {\small{}7.95} & {\small{}5.44} & \multirow{2}{*}{\textbf{\small{}0.07}} & \textbf{\small{}0.096} & {\small{}0.13} & {\small{}0.46} & \multirow{2}{*}{{\small{}0.43}}\tabularnewline
\cline{1-1} \cline{3-5} \cline{4-5} \cline{5-5} \cline{7-9} \cline{8-9} \cline{9-9} 
{\small{}small-LNS} &  & \textbf{\small{}0.35} & {\small{}8.97} & {\small{}4.37} &  & \textbf{\small{}0.091} & {\small{}0.13} & {\small{}0.58} & \tabularnewline
\hline 
{\small{}mid-RNS} & \multirow{2}{*}{{\small{}$256^{2}$}} & \textbf{\small{}0.38} & {\small{}54.3} & {\small{}9.92} & \multirow{2}{*}{\textbf{\small{}0.89}} & \textbf{\small{}2.61} & {\small{}3.34} & {\small{}41} & \multirow{2}{*}{{\small{}26.59}}\tabularnewline
\cline{1-1} \cline{3-5} \cline{4-5} \cline{5-5} \cline{7-9} \cline{8-9} \cline{9-9} 
{\small{}mid-LNS} &  & \textbf{\small{}0.36} & {\small{}51.6} & {\small{}8.36} &  & \textbf{\small{}2.39} & {\small{}3.46} & {\small{}40} & \tabularnewline
\hline 
{\small{}large-RNS} & \multirow{2}{*}{{\small{}$1024^{2}$}} & \textbf{\small{}0.65} & {\small{}1.2e3} & {\small{}23.2} & \multirow{2}{*}{\textbf{\small{}11.6}} & \textbf{\small{}182} & {\small{}236} & {\small{}9.8e3} & \multirow{2}{*}{{\small{}5.4e3}}\tabularnewline
\cline{1-1} \cline{3-5} \cline{4-5} \cline{5-5} \cline{7-9} \cline{8-9} \cline{9-9} 
{\small{}large-LNS} &  & \textbf{\small{}0.54} & {\small{}775} & {\small{}52.7} &  & \textbf{\small{}167} & {\small{}236} & {\small{}9.8e3} & \tabularnewline
\hline 
\end{tabular}{\small\par}
\end{table}

To gain more insights, we compared unpreconditioned GMRES(30), GMRES(30)
with HIF, FGMRES(30) with HIFIR, unpreconditioned LSMR, and LSMR with
RIF. We started with the same initial vectors for GMRES and FGMRES,
and we limited the number of iterations of LSMR to $10^{4}$ as in
\cite{gould2017state}. Figures~\ref{fig:null-space} and \ref{fig:null-space2}
show the convergence history of these methods for mid-LNS and large-LNS,
respectively. In the figures, the $x$-axis corresponds to the number
of matrix-vector products, which is equal to the total number of inner
iterations for HIFIR and is equal to two times the number of iterations
for LSMR. Note that the $x$-axis is in logarithmic scales because
FGMRES+HIFIR would have been invisible if the linear scale were used.
It can be seen that unpreconditioned GMRES stagnated at about $10^{-5}$
for both cases. GMRES+HIF reached machine precision for mid-LNS after
some bumps due to restart, but it stagnated at about $10^{-8}$ for
large-LNS. FGMRES+HIFIR achieved machine precision consistently, at
the cost of more matrix-vector products than GMRES+HIF to reach higher
precision ($10^{-16}$). In terms of LSMR, RIF accelerated it by about
an order of magnitude, but LSMR+RIF could not reach $10^{-8}$ for
large-LNS after $10^{4}$ iterations.

\begin{figure}
\begin{minipage}[t]{0.48\columnwidth}%
\begin{center}
\includegraphics[width=0.9\columnwidth]{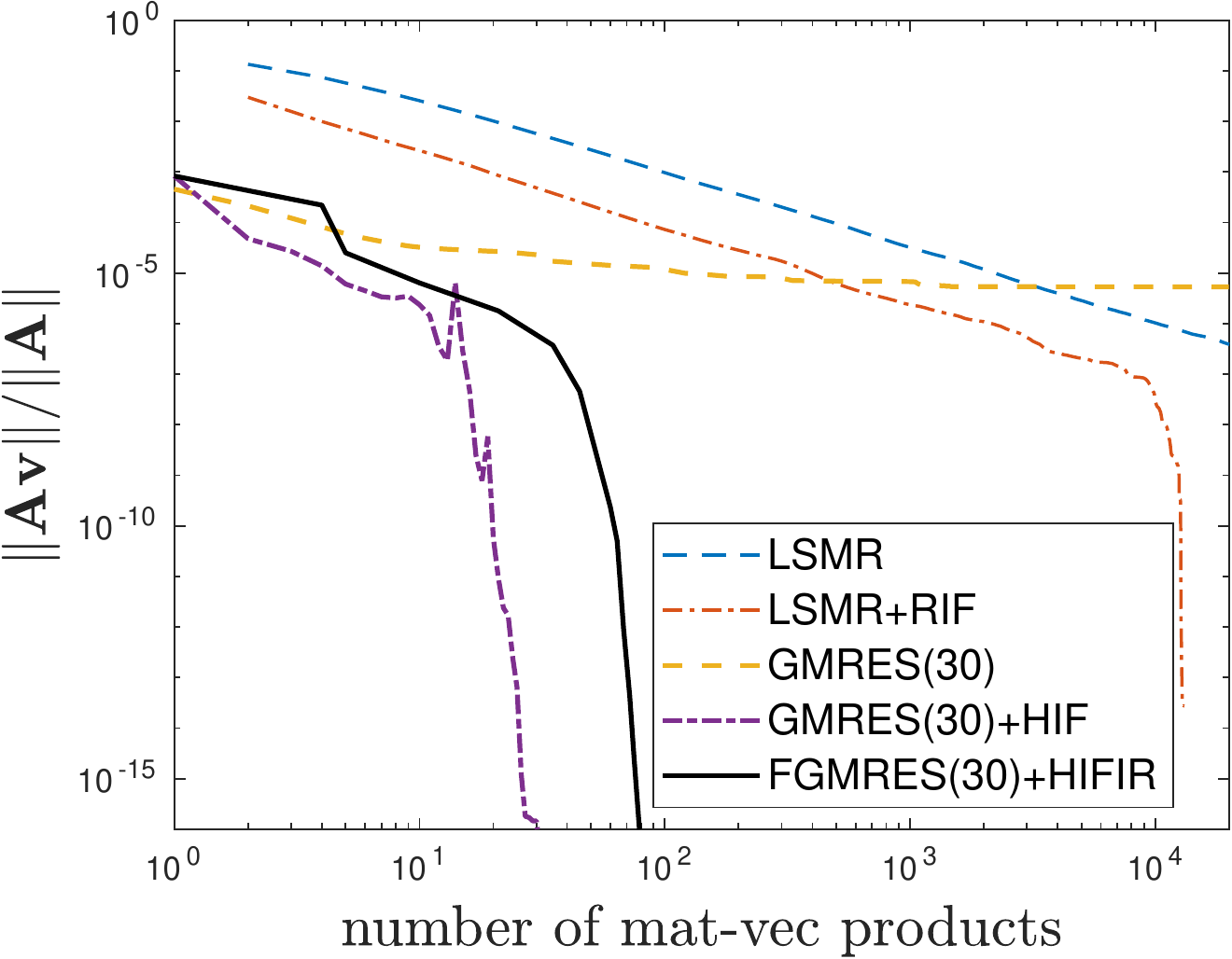}\caption{\label{fig:null-space}Comparison of convergence history for mid-LNS
in 2-norm.}
\par\end{center}%
\end{minipage}\hfill%
\begin{minipage}[t]{0.48\columnwidth}%
\begin{center}
\includegraphics[width=0.9\columnwidth]{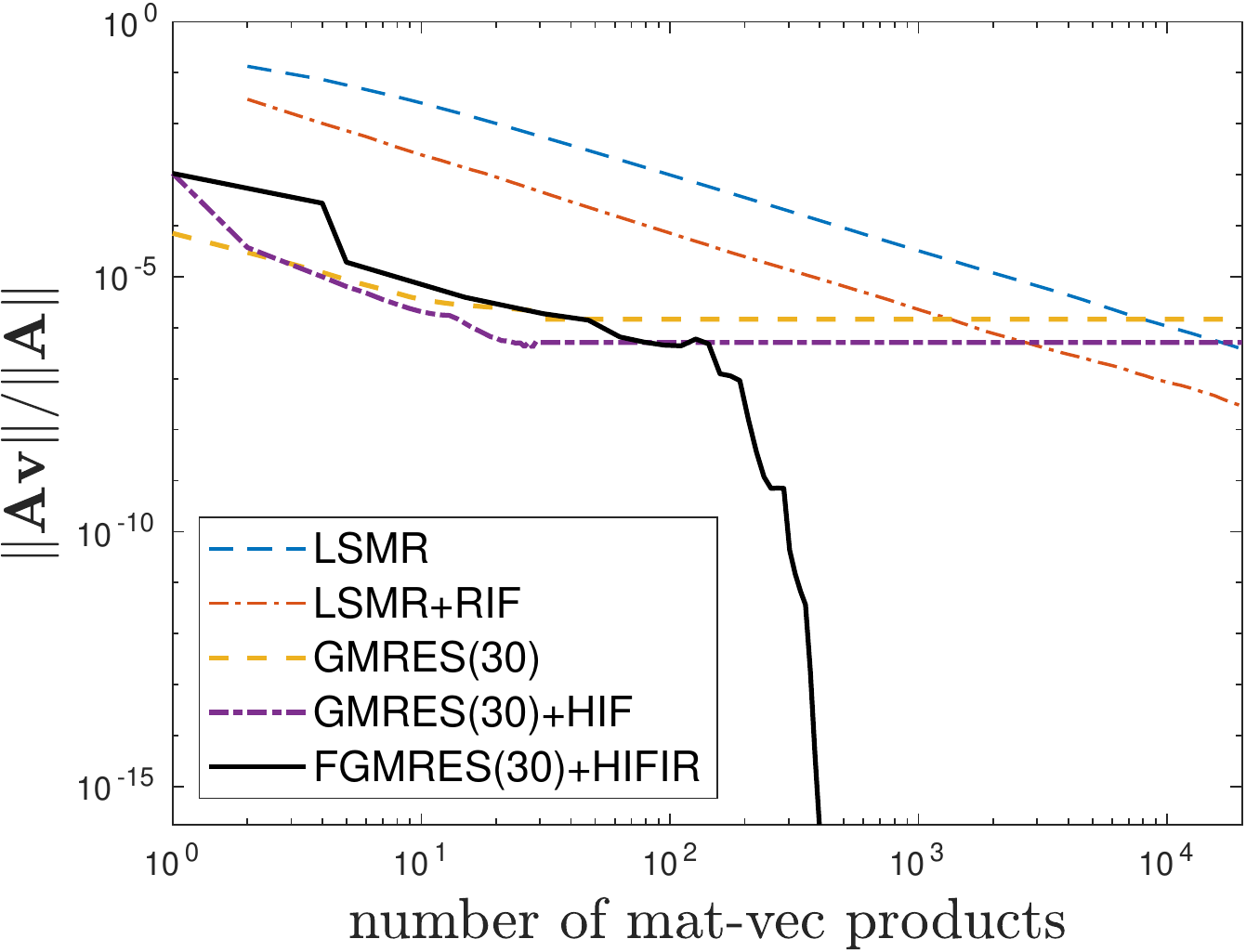}\caption{\label{fig:null-space2}Comparison of convergence history for large-LNS
in 2-norm.}
\par\end{center}%
\end{minipage}
\end{figure}

To assess the accuracy and efficiency of the methods for multidimensional
null spaces, we computed 20 null-space vectors of \textsf{invextr1}
and \textsf{shyy161}. Table~\ref{tab:multidim-null-spaces} reports
the null-space residuals of the first, 10th, and 20th null-space vectors,
as well as the runtimes. For FMGRES+HIFIR, the preprocessing and post-processing
times to obtain $\{\boldsymbol{b}_{i}\}$ and to orthogonalize $\{\boldsymbol{v}_{i}\}$
accounted for less than 1\% of the total times, so we omitted them.
For \textsf{invextr1}, LSMR+RIF produced a null-space residual of
about 0.02 for all the null-space vectors, so we considered it as
a failure;\footnote{Unpreconditioned LSMR also failed for \textsf{invextr1} by producing
a null-space residual $>10^{-5}$ after $10^{4}$ iterations.} compared to \textsf{svds}, FMGRES+HIFIR was about 10 to $10^{4}$
times more accurate and about 30\% faster. For \textsf{shyy161}, \textsf{svds}
produced a relative residual of about 0.2 starting from the third
smallest singular value with a warning message about ill-conditioning,
so we also consider it as a failure; compared to LSMR+RIF, FMGRES+HIFIR
was more accurate by a factor of $10^{5}$ and was faster by a factor
of five.

\begin{table}
\caption{\label{tab:multidim-null-spaces}Example results of computing multi-dimensional
null-space vectors. The $\boldsymbol{v}_{i}$ denote the $i$th unit-length
null-space vector. LSMR and \textsf{svds }failed for \textsf{invextr1}
and \textsf{shyy161}, respectively. The best results are in bold.}

\centering{}{\small{}}%
\begin{tabular}{c|c|c|c|c|c|c|c|c}
\hline 
\multirow{3}{*}{{\small{}Case ID}} & \multicolumn{3}{c|}{{\small{}$\Vert\boldsymbol{A}\boldsymbol{v}_{i}\Vert/\Vert\boldsymbol{A}\Vert$}} & \multicolumn{5}{c}{{\small{}Runtime (s)}}\tabularnewline
\cline{2-9} \cline{3-9} \cline{4-9} \cline{5-9} \cline{6-9} \cline{7-9} \cline{8-9} \cline{9-9} 
 & {\small{}FGMRES} & {\small{}LSMR} & \multirow{2}{*}{\textsf{\small{}svds}} & \multicolumn{2}{c|}{{\small{}HIFIR}} & \multicolumn{2}{c|}{{\small{}RIF}} & \multirow{2}{*}{\textsf{\small{}svds}}\tabularnewline
\cline{5-8} \cline{6-8} \cline{7-8} \cline{8-8} 
 & {\small{}+HIFIR} & {\small{}+RIF} &  & {\small{}fac.} & {\small{}sol.} & {\small{}fac.} & {\small{}sol.} & \tabularnewline
\hline 
\textsf{\small{}invextr1 $\boldsymbol{v}{}_{1}$} & \textbf{\small{}1.75e-15} & {\small{}\sout{1.62e-2}} & {\small{}1.65e-11} & \multirow{3}{*}{\textbf{\small{}29.2}} & \multirow{3}{*}{\textbf{\small{}147}} & \multirow{3}{*}{{\small{}\sout{1.5e4}}} & \multirow{3}{*}{{\small{}\sout{2.5e4}}} & \multirow{3}{*}{{\small{}255}}\tabularnewline
\cline{1-4} \cline{2-4} \cline{3-4} \cline{4-4} 
\textsf{\small{}invextr1 $\boldsymbol{v}_{10}$} & \textbf{\small{}1.29e-13} & {\small{}\sout{1.48e-2}} & {\small{}1.61e-11} &  &  &  &  & \tabularnewline
\cline{1-4} \cline{2-4} \cline{3-4} \cline{4-4} 
\textsf{\small{}invextr1 $\boldsymbol{v}_{20}$} & \textbf{\small{}1.36e-12} & {\small{}\sout{1.55e-2}} & {\small{}1.61e-11} &  &  &  &  & \tabularnewline
\hline 
\textsf{\small{}shyy161 $\boldsymbol{v}_{1}$} & \textbf{\small{}3.31e-20} & {\small{}3.01e-9} & {\small{}7.92e-19} & \multirow{3}{*}{{\small{}233}} & \multirow{3}{*}{\textbf{\small{}6.2}} & \multirow{3}{*}{\textbf{\small{}3.33}} & \multirow{3}{*}{{\small{}1.1e3}} & \multirow{3}{*}{{\small{}\sout{5.73}}}\tabularnewline
\cline{1-4} \cline{2-4} \cline{3-4} \cline{4-4} 
\textsf{\small{}shyy161 $\boldsymbol{v}_{10}$} & \textbf{\small{}3.67e-15} & {\small{}3.43e-9} & {\small{}\sout{2.05e-1}} &  &  &  &  & \tabularnewline
\cline{1-4} \cline{2-4} \cline{3-4} \cline{4-4} 
\textsf{\small{}shyy161 $\boldsymbol{v}_{20}$} & \textbf{\small{}4.63e-14} & {\small{}6.79e-9} & {\small{}\sout{2.06e-1}} &  &  &  &  & \tabularnewline
\hline 
\end{tabular}{\small\par}
\end{table}

\subsection{\label{subsec:Numerical-comparison-of-PI}Numerical comparison of
pseudoinverse solutions}

To assess the effectiveness of PIPIT, we solved the two example PDEs
in three dimensions. The first one was the advection-diffusion equation
over $\Omega\subset\mathbb{R}^{3}$ with Neumann boundary conditions
on $\partial\Omega$, i.e., 
\begin{align}
-\Delta u+\boldsymbol{v}\cdot\boldsymbol{\nabla}u & =f\qquad\text{in }\text{\ensuremath{\Omega}},\label{eq:convection-diffusion}\\
\nabla u\cdot\boldsymbol{n} & =g\qquad\text{on }\text{\ensuremath{\partial}\ensuremath{\ensuremath{\Omega}}},
\end{align}
where $u$ is the unknown, $\boldsymbol{v}$ is a divergence-free
velocity field, $f$ is a source term, $\boldsymbol{n}$ is the unit
outward surface normal, and $g$ is out-flux. Since the differential
operator is not self-adjoint, a consistent discretization typically
leads to a range-asymmetric algebraic equation, of which the RNS corresponds
to the constant mode (i.e., $\mathcal{N}(\boldsymbol{A})=\text{span}\{\boldsymbol{1}\}$).
However, the LNS is unknown \emph{a priori} due to range asymmetry.
We are interested in obtaining the pseudoinverse solution, which corresponds
to the LS solution with a zero constant mode. For the advection-diffusion
equation, we used $\boldsymbol{v}=[1,1,1]^{T}$ and the exact solution
$u_{*}=e^{xyz}$ over a unit sphere centered at the origin. We generated
tetrahedral meshes using Gmsh \cite{geuzaine2009gmsh} and discretized
the PDE using the generalized finite difference method \cite{Urena20111849}.
Figure~\ref{fig:ad-solution} shows a cut-off of a coarse mesh and
its solution in the $yz$-plane.

The second example is Navier's equations for linear elasticity over
$\Omega\subset\mathbb{R}^{3}$ with pure Neumann boundary condition
on piecewise smooth $\partial\Omega$, i.e.,
\begin{align}
-\boldsymbol{\nabla}\cdot\boldsymbol{\sigma}(\boldsymbol{u}) & =\boldsymbol{f}\text{\ensuremath{\qquad\text{in }\Omega}},\label{eq:linear-elasticity}\\
\boldsymbol{\sigma}(\boldsymbol{u})\cdot\boldsymbol{n} & =\boldsymbol{h}\text{\ensuremath{\qquad\text{on }\partial\Omega},}
\end{align}
where $\boldsymbol{u}$ is the body displacements, $\boldsymbol{\sigma}=\mu(\boldsymbol{\nabla}\boldsymbol{u}+(\boldsymbol{\nabla}\boldsymbol{u})^{T})+\lambda(\boldsymbol{\nabla}\cdot\boldsymbol{u})\boldsymbol{I}$
is the stress tensor with Lam\'e constants $\mu$ and $\lambda$,
$\boldsymbol{f}$ is the body force, and $\boldsymbol{h}$ is the
surface traction. The continuum formulation has a six-dimensional
null space, which corresponds to the degrees of freedom in rigid-body
motion. We used a similar setting as Example 2 in \cite{kuchta2019singular}:
The domain $\Omega$ is obtained by rotating the box $\mathbb{B}=\left[-\frac{1}{4},\frac{1}{4}\right]\times\left[-\frac{1}{2},\frac{1}{2}\right]\times\left[-\frac{1}{8},\frac{1}{8}\right]$
around the $x$-, $y$-, and $z$-axes by angles $\pi/2$, $\pi/4$,
and $\pi/5$ in that order, followed by a translation of $[0.1,0.2,0.3]^{T}$,
i.e., $\Omega=\boldsymbol{Q}\mathbb{B}+[0.1,0.2,0.3]^{T}$ in matrix
notation, where 
\begin{equation}
\boldsymbol{Q}=\begin{bmatrix}\cos\frac{\pi}{5} & -\sin\frac{\pi}{5}\\
\sin\frac{\pi}{5} & \cos\frac{\pi}{5}\\
 &  & 1
\end{bmatrix}\begin{bmatrix}\sin\frac{\pi}{4} &  & \cos\frac{\pi}{4}\\
 & 1\\
\cos\frac{\pi}{4} &  & -\sin\frac{\pi}{4}
\end{bmatrix}\begin{bmatrix}1\\
 & \cos\frac{\pi}{2} & -\sin\frac{\pi}{2}\\
 & \sin\frac{\pi}{2} & \cos\frac{\pi}{2}
\end{bmatrix}.\label{eq:rotation-matrix}
\end{equation}
We manufactured the body force and surface traction as $\boldsymbol{f}=-\boldsymbol{\nabla}\cdot\boldsymbol{\sigma}(\boldsymbol{u}_{*})$
and $\boldsymbol{h}=\boldsymbol{\sigma}(\boldsymbol{u}_{*})\cdot\boldsymbol{n}$,
respectively, where $\mu=384$ and $\lambda=577$ in $\boldsymbol{\sigma}$
and $\boldsymbol{u}_{*}=\frac{1}{4}(\sin\frac{\pi}{4}x,z^{3},-y)$.
In \cite{kuchta2019singular}, Kuchta, Mardal, and Mortensen projected
off the analytical null-space components from $\boldsymbol{f}$ and
$\boldsymbol{h}$. Since PIPIT solves inconsistent systems directly,
such sophisticated preprocessing is not needed in our setting. We
solved the problem using linear finite elements in FEniCS \cite{alnaes2015fenics}.
Figure~\ref{fig:linear-elasticity-solution} shows a coarse mesh
and the warped geometry with four times the computed displacements.
\begin{figure}
\begin{minipage}[t]{0.48\columnwidth}%
\begin{center}
\includegraphics[height=1.2in]{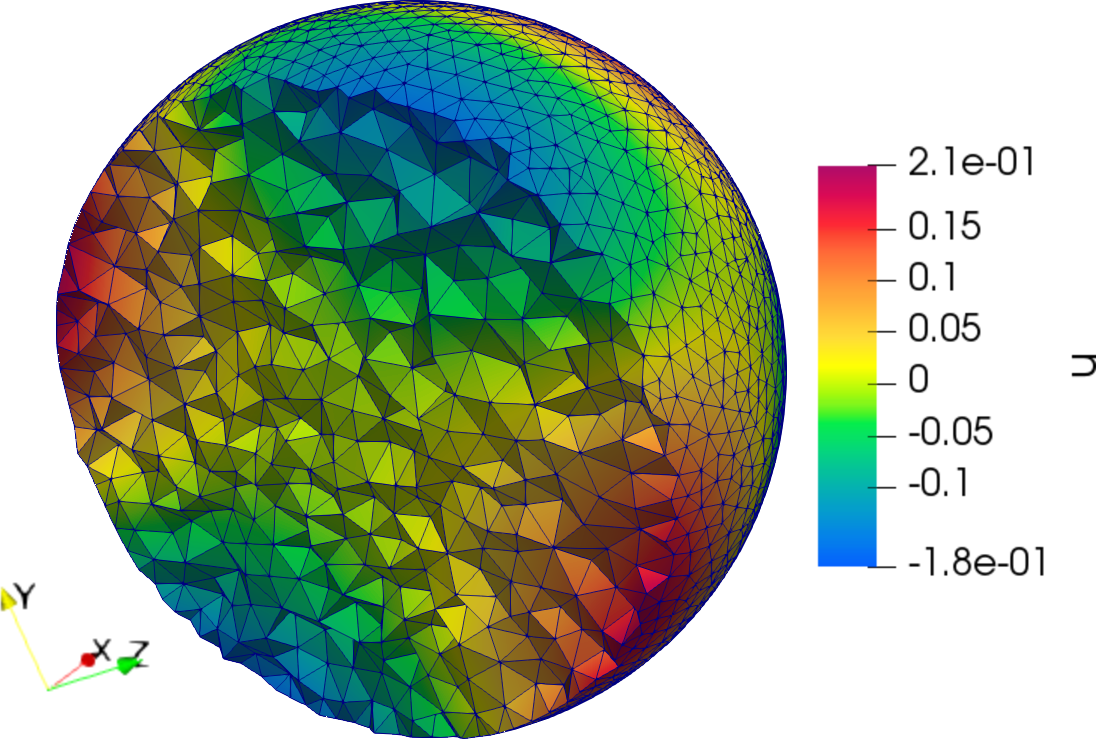}\caption{\label{fig:ad-solution}Example solution of the advection-diffusion
equation with coarse tetrahedral mesh.}
\par\end{center}%
\end{minipage}\hfill%
\begin{minipage}[t]{0.48\columnwidth}%
\begin{center}
\includegraphics[height=1.2in]{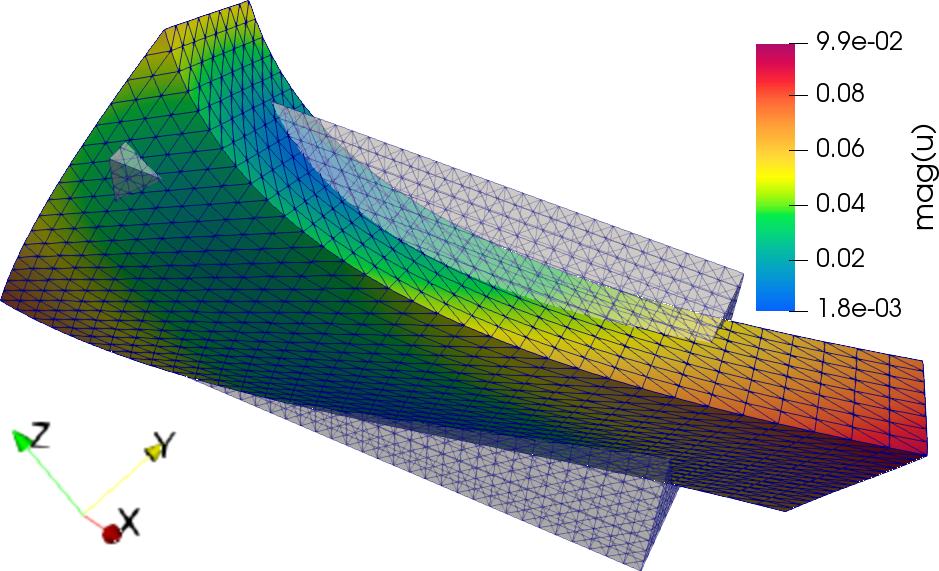}\caption{\label{fig:linear-elasticity-solution}Example coarse mesh and deformed
geometry with exaggerated displacements.}
\par\end{center}%
\end{minipage}
\end{figure}

To study how the problem sizes affect the accuracy of the solution,
for each PDE we generated three matrices, of which the sizes are summarized
in Table~\ref{tab:Examples-pipid}. As points of reference, we compare
our solutions with those of unpreconditioned LSMR and HIF-preconditioned
GMRES. For the latter, we terminate GMRES when the Hessenberg matrix
is moderately ill-conditioned ($\kappa(\boldsymbol{H}_{k})\ge\sqrt{\epsilon_{\text{mach}}}$)
and the residual has stagnated. We refer to this adaptive strategy
as \emph{maximal-precision Brown--Walker} (\emph{MPBW}), in that
its idea is similar to that of Brown and Walker \cite{brown1997gmres},
except that it achieves the maximal precision of GMRES. We limited
the numbers of iterations of (F)GMRES and LSMR to 500 and $10^{4}$,
respectively. We measured the relative residual based on the normal
equation (i.e., $\Vert\boldsymbol{A}^{T}\boldsymbol{r}\Vert_{2}/\Vert\boldsymbol{A}^{T}\boldsymbol{b}\Vert_{2}$).
When LSMR produced a sufficiently accurate residual (in particular
$<\sqrt{\epsilon_{\text{mach}}}$), we report the difference of the
norm of the solution vectors from PIPIT and LSMR. As evident in Table~\ref{tab:Examples-pipid},
PIPIT significantly improved the residuals while minimizing $\Vert\boldsymbol{x}\Vert$
to near machine precision. MPBW did not reach $\sqrt{\epsilon_{\text{mach}}}$
for any of the test cases. More importantly, even when the relative
residuals were at $10^{-5}$, we observed that $\Vert\boldsymbol{x}_{\text{MPBW}}\Vert$
was about $10^{9}$ times larger than $\Vert\boldsymbol{x}_{\text{PIPIT}}\Vert$.
Hence, even though preconditioned GMRES with MPBW may produce an approximate
LS solution, the solution may be unacceptable from the application's
point of view. We omit runtimes since unpreconditioned LSMR converged
too slowly. 

\begin{table}
\caption{\label{tab:Examples-pipid}Example results of pseudoinverse solutions
for inconsistent systems. AD stands for advection-diffusion, and LE
stands for linear elasticity. The best results are in bold; `-' indicates
ineligibility of $\boldsymbol{x}_{\text{LSMR}}$ for comparison.}

\centering{}{\small{}}%
\begin{tabular}{c|c|c|c|c|c|c}
\hline 
\multirow{2}{*}{{\small{}Case ID}} & \multicolumn{2}{c|}{{\small{}Matrix info}} & \multicolumn{3}{c|}{{\small{}$\Vert\boldsymbol{A}^{T}\boldsymbol{r}\Vert/\Vert\boldsymbol{A}^{T}\boldsymbol{b}\Vert$}} & \multicolumn{1}{c}{{\small{}$\Vert\boldsymbol{x}_{\text{PIPIT}}\Vert/$}}\tabularnewline
\cline{2-6} \cline{3-6} \cline{4-6} \cline{5-6} \cline{6-6} 
 & {\small{}$n$} & {\small{}nnz} & {\small{}PIPIT} & {\small{}LSMR} & {\small{}MPBW} & {\small{}$\Vert\boldsymbol{x}_{\text{LSMR}}\Vert-1$}\tabularnewline
\hline 
{\small{}AD-coarse} & {\small{}9,807} & {\small{}640,238} & \textbf{\small{}4.90e-15} & {\small{}2.60e-14} & 8.21e-02 & {\small{}2.9e-15}\tabularnewline
\hline 
{\small{}AD-mid} & {\small{}66,877} & {\small{}3,954,678} & \textbf{\small{}4.21e-15} & {\small{}1.08e-13} & 2.50e-04 & {\small{}-6.0e-15}\tabularnewline
\hline 
{\small{}AD-fine} & {\small{}612,309} & {\small{}33,223,067} & \textbf{\small{}1.32e-14} & {\small{}1.84e-07} & 4.63e-05 & -\tabularnewline
\hline 
{\small{}LE-coarse} & {\small{}15,147} & {\small{}610,929} & \textbf{\small{}1.04e-11} & {\small{}6.32e-07} & 3.38e-06 & {\small{}-}\tabularnewline
\hline 
{\small{}LE-mid} & {\small{}109,395} & {\small{}4,652,505} & \textbf{\small{}1.24e-13} & {\small{}6.37e-06} & 5.42e-07 & {\small{}-}\tabularnewline
\hline 
{\small{}LE-fine} & {\small{}1,081,188} & {\small{}47,392,074} & \textbf{\small{}1.53e-11} & {\small{}2.09e-05} & 5.83e-08 & {\small{}-}\tabularnewline
\hline 
\end{tabular}{\small\par}
\end{table}

\section{Conclusions\label{sec:Conclusions}}

In this work, we introduced a new class of variable preconditioners,
called \emph{HIFIR}, for preconditioning asymmetric singular systems.
The core component of HIFIR is a \emph{hybrid incomplete factorization}
or \emph{HIF}, which combines an MLILU with RRQR on the final Schur
complement. We derived HIFIR by establishing the theory that (1) the
generalized inverses are optimal preconditioners for consistent systems
and (2) HIF is an $\epsilon$-accurate approximation to generalized
inverses. We also introduced HIF with iterative refinement to improve
the effectiveness of the preconditioner without tightening the thresholds
in HIF or increasing memory requirements. Using HIF and HIFIR, we
solved three important subclasses of singular systems. First, we applied
HIF in restarted GMRES to find LS solutions of consistent systems,
which significantly improved the accuracy and robustness of MLILU-preconditioned
GMRES. Second, we applied HIFIR in the context of restarted FGMRES
to compute null-space vectors of singular systems. Our proposed approach
improved accuracy by orders of magnitude compared to the prior state
of the art (including sparse SVD and RIF-preconditioned LSMR) while
being significantly faster. Third, by combining the two techniques,
we introduced PIPIT for finding the pseudoinverse solutions of singular
systems from PDEs with low-dimensional null spaces. PIPIT improved
the accuracy by orders of magnitude compared to other alternatives.
Since our proposed methods work for structurally singular systems
(e.g., \textsf{coater2} in Table~\ref{tab:Examples-consistent-systems}),
they can also be applied to solve rank-deficient rectangular LS systems
with $m\approx n$. This work has focused on real matrices, but all
the results generalize to complex matrices. The implementation of
HIFIR for both real and complex matrices is available at \url{https://github.com/hifirworks/hifir}.
However, a limitation of the present work is that PIPIT is not efficient
for solving systems with high-dimensional null spaces, at least in
a sequential setting. Another limitation is that the final Schur complement
in the HIF may be quite large for some systems. In addition, our current
implementation of HIFIR is serial, and it is desirable to parallelize
it for larger-scale problems. We plan to address these limitations
in the future.

\section*{Acknowledgments}

Computational results were obtained using the Seawulf cluster at the
Institute for Advanced Computational Science of Stony Brook University.
We thank the anonymous reviewers for many helpful comments, which
have significantly improved the paper.

\bibliographystyle{siamplain}
\bibliography{references}

\appendix

\section{\label{sec:Relationship-of-condition}Bounding the condition number
of $\boldsymbol{A}\boldsymbol{A}^{g}$}

We assert the following fact.
\begin{proposition}
\label{prop:condition-number}Given $\boldsymbol{A}\in\mathbb{R}^{n\times n}$,
$r=\text{\emph{rank}}(\boldsymbol{A})$, and $\boldsymbol{A}\boldsymbol{A}^{g}=\boldsymbol{X}\begin{bmatrix}\boldsymbol{I}_{r}\\
 & \boldsymbol{0}
\end{bmatrix}\boldsymbol{X}^{-1}$, $\kappa(\boldsymbol{A}\boldsymbol{A}^{g})=\sigma_{1}(\boldsymbol{A}\boldsymbol{A}^{g})/\sigma_{r}(\boldsymbol{A}\boldsymbol{A}^{g})\leq\kappa(\boldsymbol{X})$.
\end{proposition}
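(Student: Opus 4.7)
The plan is to bound the numerator and denominator in $\kappa(\boldsymbol{A}\boldsymbol{A}^{g})=\sigma_{1}(\boldsymbol{A}\boldsymbol{A}^{g})/\sigma_{r}(\boldsymbol{A}\boldsymbol{A}^{g})$ separately, using only (i) submultiplicativity of the 2-norm and (ii) the fact that $\boldsymbol{A}\boldsymbol{A}^{g}$ is a (generally oblique) projector by Lemma~\ref{lem:pseudo-inv-range-preservation}. The upper bound on $\sigma_{1}$ is immediate; the lower bound on $\sigma_{r}$ is where the idempotency of $\boldsymbol{A}\boldsymbol{A}^{g}$ does the work.

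First, I would note that $\left\Vert \begin{bmatrix}\boldsymbol{I}_{r}\\ & \boldsymbol{0}\end{bmatrix}\right\Vert _{2}=1$, so submultiplicativity applied to $\boldsymbol{A}\boldsymbol{A}^{g}=\boldsymbol{X}\begin{bmatrix}\boldsymbol{I}_{r}\\ & \boldsymbol{0}\end{bmatrix}\boldsymbol{X}^{-1}$ gives
\begin{equation*}
\sigma_{1}(\boldsymbol{A}\boldsymbol{A}^{g})=\left\Vert \boldsymbol{A}\boldsymbol{A}^{g}\right\Vert _{2}\leq\left\Vert \boldsymbol{X}\right\Vert _{2}\left\Vert \boldsymbol{X}^{-1}\right\Vert _{2}=\kappa(\boldsymbol{X}).
\end{equation*}

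The main step is to prove $\sigma_{r}(\boldsymbol{A}\boldsymbol{A}^{g})\geq 1$. Let $\boldsymbol{M}=\boldsymbol{A}\boldsymbol{A}^{g}$, which is idempotent (Lemma~\ref{lem:pseudo-inv-range-preservation}) and of rank $r$ (Proposition~\ref{prop:diagnalizability}). Consider the thin SVD $\boldsymbol{M}=\boldsymbol{U}\boldsymbol{\Sigma}\boldsymbol{V}^{T}$ with $\boldsymbol{U},\boldsymbol{V}\in\mathbb{R}^{n\times r}$ having orthonormal columns and $\boldsymbol{\Sigma}\in\mathbb{R}^{r\times r}$ diagonal and invertible. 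The relation $\boldsymbol{M}^{2}=\boldsymbol{M}$ yields $\boldsymbol{U}\boldsymbol{\Sigma}(\boldsymbol{V}^{T}\boldsymbol{U})\boldsymbol{\Sigma}\boldsymbol{V}^{T}=\boldsymbol{U}\boldsymbol{\Sigma}\boldsymbol{V}^{T}$, and premultiplying by $\boldsymbol{U}^{T}$ and postmultiplying by $\boldsymbol{V}$ collapses this to $\boldsymbol{\Sigma}(\boldsymbol{V}^{T}\boldsymbol{U})\boldsymbol{\Sigma}=\boldsymbol{\Sigma}$, i.e., $\boldsymbol{V}^{T}\boldsymbol{U}=\boldsymbol{\Sigma}^{-1}$. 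Since $\boldsymbol{U}$ and $\boldsymbol{V}$ have orthonormal columns, $\left\Vert \boldsymbol{V}^{T}\boldsymbol{U}\right\Vert _{2}\leq 1$, hence $\left\Vert \boldsymbol{\Sigma}^{-1}\right\Vert _{2}\leq 1$, which is equivalent to $\sigma_{r}(\boldsymbol{M})\geq 1$.

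Combining the two bounds gives $\kappa(\boldsymbol{A}\boldsymbol{A}^{g})\leq \kappa(\boldsymbol{X})/1=\kappa(\boldsymbol{X})$, as claimed. The key (and perhaps less obvious) ingredient is the singular-value lower bound $\sigma_{r}(\boldsymbol{M})\geq 1$ for any nonzero idempotent $\boldsymbol{M}$; everything else is routine submultiplicativity. I expect this idempotency step to be the only nontrivial part of the argument.
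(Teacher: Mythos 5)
Your proof is correct, and the key step is handled by a genuinely different route than the paper's. Both arguments open identically: $\sigma_{1}(\boldsymbol{A}\boldsymbol{A}^{g})=\Vert\boldsymbol{X}\begin{bmatrix}\boldsymbol{I}_{r}\\ & \boldsymbol{0}\end{bmatrix}\boldsymbol{X}^{-1}\Vert\leq\kappa(\boldsymbol{X})$ by submultiplicativity, and both reduce the problem to showing $\sigma_{r}(\boldsymbol{A}\boldsymbol{A}^{g})\geq1$. For that step the paper takes a real Schur decomposition, uses idempotence to force the block form $\boldsymbol{Q}^{T}\boldsymbol{A}\boldsymbol{A}^{g}\boldsymbol{Q}=\left[\begin{smallmatrix}\boldsymbol{I}_{r} & \boldsymbol{R}_{12}\\ & \boldsymbol{0}\end{smallmatrix}\right]$, and then argues geometrically: since this matrix fixes the whole subspace $\text{span}\{\boldsymbol{e}_{1},\dots,\boldsymbol{e}_{r}\}$ pointwise, the image of the unit ball contains an $r$-dimensional unit ball, so the $r$th semiaxis is at least $1$. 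You instead work directly from the thin SVD $\boldsymbol{M}=\boldsymbol{U}\boldsymbol{\Sigma}\boldsymbol{V}^{T}$ of the idempotent $\boldsymbol{M}=\boldsymbol{A}\boldsymbol{A}^{g}$ and extract the identity $\boldsymbol{V}^{T}\boldsymbol{U}=\boldsymbol{\Sigma}^{-1}$ from $\boldsymbol{M}^{2}=\boldsymbol{M}$, whence $\Vert\boldsymbol{\Sigma}^{-1}\Vert\leq\Vert\boldsymbol{V}^{T}\Vert\Vert\boldsymbol{U}\Vert=1$; each step is a one-line algebraic manipulation. This is a clean, fully algebraic proof of the general fact that every nonzero singular value of an idempotent matrix is at least $1$ (note the paper itself remarks that it resorts to ``a hybrid of algebraic and geometric arguments''; your argument shows the geometric detour is avoidable). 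One cosmetic remark: you cite Lemma~\ref{lem:pseudo-inv-range-preservation} and Proposition~\ref{prop:diagnalizability} for idempotence and rank, but both follow immediately from the hypothesis $\boldsymbol{A}\boldsymbol{A}^{g}=\boldsymbol{X}\begin{bmatrix}\boldsymbol{I}_{r}\\ & \boldsymbol{0}\end{bmatrix}\boldsymbol{X}^{-1}$ given in the statement, so your argument is in fact self-contained.
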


Since $\sigma_{r}$ is an interior (instead of extreme) singular value
in general, we cannot simply rely on the Cauchy--Schwarz inequality
to prove the proposition. Instead, we use a hybrid of algebraic and
geometric arguments using both Schur and SVDs.
\begin{proof}
$\sigma_{1}(\boldsymbol{A}\boldsymbol{A}^{g})=\left\Vert \boldsymbol{A}\boldsymbol{A}^{g}\right\Vert \leq\sigma_{1}(\boldsymbol{X})/\sigma_{n}(\boldsymbol{X})=\kappa(\boldsymbol{X})$.
Hence, we only need to show that $\sigma_{r}(\boldsymbol{A}\boldsymbol{A}^{g})\ge1$.
Consider a (real) Schur decomposition \cite[p. 276]{Golub13MC} 
\[
\boldsymbol{A}\boldsymbol{A}^{g}=\boldsymbol{Q}\left[\begin{array}{cc}
\boldsymbol{R}_{11} & \boldsymbol{R}_{12}\\
 & \boldsymbol{R}_{22}
\end{array}\right]\boldsymbol{Q}^{T},
\]
where $\boldsymbol{R}_{11}\in\mathbb{R}^{r\times r}$ and $\boldsymbol{R}_{22}\in\mathbb{R}^{(n-r)\times(n-r)}$
are upper triangular matrices with all ones and zeros in their diagonals,
respectively, and $\boldsymbol{Q}\in\mathbb{R}^{n\times n}$ is orthogonal
(i.e., $\boldsymbol{Q}^{T}=\boldsymbol{Q}^{-1}$). $\boldsymbol{R}=\boldsymbol{Q}^{T}\boldsymbol{A}\boldsymbol{A}^{g}\boldsymbol{Q}$
is idempotent, so are $\boldsymbol{R}_{11}$ and $\boldsymbol{R}_{22}$.
$\boldsymbol{R}_{11}$ is nonsingular and idempotent, so $\boldsymbol{R}_{11}=\boldsymbol{I}_{r}$.
The idempotence of $\boldsymbol{R}_{22}$ implies that $\text{\ensuremath{\mathcal{R}}}(\boldsymbol{R}_{22})=\{\boldsymbol{0}\}$,
so $\boldsymbol{R}_{22}=\boldsymbol{0}$. Therefore, 
\[
\boldsymbol{R}=\boldsymbol{Q}^{T}\boldsymbol{A}\boldsymbol{A}^{g}\boldsymbol{Q}=\left[\begin{array}{cc}
\boldsymbol{I}_{r} & \boldsymbol{R}_{12}\\
 & \boldsymbol{0}
\end{array}\right],
\]
which maps $\pm\boldsymbol{e}_{i}$ to $\pm\boldsymbol{e}_{i}$ for
$i=1,2,\dots,r$. Hence, the hyperellipse \cite[p. 26]{trefethen1997numerical}
$\mathcal{S}=\{\boldsymbol{R}\boldsymbol{u}\mid\boldsymbol{u}\in\mathbb{R}^{n}\wedge\Vert\boldsymbol{u}\Vert=1\}$
encloses the vectors $\{\pm\boldsymbol{e}_{i}\mid i=1,2,\dots,r\}$,
and it must also enclose the $r$-dimensional unit ball with these
axes. Hence, $1\leq\sigma_{r}(\boldsymbol{R})=\sigma_{r}(\boldsymbol{A}\boldsymbol{A}^{g})$.
Due to Definition~\ref{def:condition-number}, $\text{\ensuremath{\kappa}(\ensuremath{\boldsymbol{A}\boldsymbol{A}^{g}})=\ensuremath{\sigma_{1}}(\ensuremath{\boldsymbol{A}\boldsymbol{A}^{g}})/\ensuremath{\sigma_{r}}(\ensuremath{\boldsymbol{A}\boldsymbol{A}^{g}})\ensuremath{\leq\kappa}(\ensuremath{\boldsymbol{X}})}$.
\end{proof}

\section{\label{sec:Proof-of-bound}Proof of bound of $\kappa(\boldsymbol{A}\boldsymbol{G})$}

We prove the approximate bound of $\kappa(\boldsymbol{A}\boldsymbol{G})$
in \eqref{eq:bound-condition-number} in 2-norm. Due to the Cauchy--Schwarz
inequality and Definition~\ref{def:epsilon-accurate},
\[
\left\Vert \boldsymbol{X}^{-1}\boldsymbol{A}\boldsymbol{G}\boldsymbol{X}\right\Vert \leq\left\Vert \begin{bmatrix}\boldsymbol{I}_{r}\\
 & \boldsymbol{0}
\end{bmatrix}\right\Vert +\left\Vert \boldsymbol{X}^{-1}\boldsymbol{A}\boldsymbol{G}\boldsymbol{X}-\begin{bmatrix}\boldsymbol{I}_{r}\\
 & \boldsymbol{0}
\end{bmatrix}\right\Vert \leq1+\epsilon,
\]
and hence 
\[
\left\Vert \boldsymbol{A}\boldsymbol{G}\right\Vert \leq\Vert\boldsymbol{X}\Vert\left\Vert \boldsymbol{X}^{-1}\boldsymbol{A}\boldsymbol{G}\boldsymbol{X}\right\Vert \Vert\boldsymbol{X}^{-1}\Vert=\kappa(\boldsymbol{X})\text{\ensuremath{\left\Vert \boldsymbol{X}^{-1}\boldsymbol{A}\boldsymbol{G}\boldsymbol{X}\right\Vert }}\leq\kappa(\boldsymbol{X})(1+\epsilon).
\]
In other words, for all $\boldsymbol{u}\in\mathcal{R}\left(\boldsymbol{A}\boldsymbol{G}\right)\backslash\{\boldsymbol{0}\}$,
\[
\left\Vert \boldsymbol{u}^{T}\boldsymbol{A}\boldsymbol{G}\right\Vert \leq\left\Vert \boldsymbol{u}\right\Vert \left\Vert \boldsymbol{A}\boldsymbol{G}\right\Vert \leq\kappa(\boldsymbol{X})\left(1+\epsilon\right)\left\Vert \boldsymbol{u}\right\Vert ,
\]
where $\mathcal{R}\left(\boldsymbol{A}\boldsymbol{G}\right)=\mathcal{R}\left(\boldsymbol{A}\right)$
due to Lemma~\ref{lem:AGI-range-preservation}. Furthermore, due
to the Cauchy--Schwarz inequality and submultiplicativity,
\begin{align}
\left\Vert \boldsymbol{u^{T}}\boldsymbol{A}\boldsymbol{G}\right\Vert  & \geq\left\Vert \boldsymbol{u}^{T}\boldsymbol{X}\begin{bmatrix}\boldsymbol{I}_{r}\\
 & \boldsymbol{0}
\end{bmatrix}\boldsymbol{X}^{-1}\right\Vert -\left\Vert \boldsymbol{u^{T}}\boldsymbol{X}\left(\boldsymbol{X}^{-1}\boldsymbol{A}\boldsymbol{G}\boldsymbol{X}-\begin{bmatrix}\boldsymbol{I}_{r}\\
 & \boldsymbol{0}
\end{bmatrix}\right)\boldsymbol{X}^{-1}\right\Vert \nonumber \\
 & \geq\left\Vert \boldsymbol{u}^{T}\boldsymbol{X}\begin{bmatrix}\boldsymbol{I}_{r}\\
 & \boldsymbol{0}
\end{bmatrix}\boldsymbol{X}^{-1}\right\Vert -\kappa(\boldsymbol{X})\epsilon\Vert\boldsymbol{u}\Vert.\label{eq:UTAG_bounds}
\end{align}
Given $\boldsymbol{X}\in\mathbb{R}^{n\times n}$ in Definition~\ref{def:epsilon-accurate},
due to Proposition~\ref{prop:diagnalizability}, there exists $\boldsymbol{A}^{g}$
such that $\boldsymbol{X}^{-1}\boldsymbol{A}\boldsymbol{A}^{g}\boldsymbol{X}=\begin{bmatrix}\boldsymbol{I}_{r}\\
 & \boldsymbol{0}
\end{bmatrix}$ and $\mathcal{R}(\boldsymbol{A}\boldsymbol{A}^{g})=\mathcal{R}(\boldsymbol{A})$.
Hence, 
\begin{equation}
\left\Vert \boldsymbol{u}^{T}\boldsymbol{X}\begin{bmatrix}\boldsymbol{I}_{r}\\
 & \boldsymbol{0}
\end{bmatrix}\boldsymbol{X}^{-1}\right\Vert =\left\Vert \boldsymbol{u}^{T}\boldsymbol{A}\boldsymbol{A}^{g}\right\Vert .\label{eq:uAAg}
\end{equation}
From \eqref{eq:rth-singular-value}, \eqref{eq:UTAG_bounds}, and
\eqref{eq:uAAg},
\begin{align*}
\sigma_{r}(\boldsymbol{A}\boldsymbol{G}) & =\min_{\boldsymbol{u}\in\mathcal{R}(\boldsymbol{A}\boldsymbol{G})\backslash\{\boldsymbol{0}\}}\frac{\left\Vert \boldsymbol{u}^{T}\boldsymbol{A}\boldsymbol{G}\right\Vert }{\left\Vert \boldsymbol{u}\right\Vert }\\
 & \geq\min_{\boldsymbol{u}\in\mathcal{R}(\boldsymbol{A}\boldsymbol{G})\backslash\{\boldsymbol{0}\}}\frac{\left\Vert \boldsymbol{u}^{T}\boldsymbol{X}\begin{bmatrix}\boldsymbol{I}_{r}\\
 & \boldsymbol{0}
\end{bmatrix}\boldsymbol{X}^{-1}\right\Vert }{\left\Vert \boldsymbol{u}\right\Vert }-\kappa(\boldsymbol{X})\epsilon\\
 & =\min_{\boldsymbol{u}\in\mathcal{R}(\boldsymbol{A}\boldsymbol{A}^{g})\backslash\{\boldsymbol{0}\}}\frac{\left\Vert \boldsymbol{u}^{T}\boldsymbol{A}\boldsymbol{A}^{g}\right\Vert }{\left\Vert \boldsymbol{u}\right\Vert }-\kappa(\boldsymbol{X})\epsilon\\
 & =\sigma_{r}(\boldsymbol{A}\boldsymbol{A}^{g})-\kappa(\boldsymbol{X})\epsilon.
\end{align*}
Under the assumption of $\kappa(\boldsymbol{X})\approx1$, $\sigma_{r}(\boldsymbol{A}\boldsymbol{A}^{g})\geq1$
due to Proposition~\ref{prop:condition-number}, and
\[
\kappa(\boldsymbol{A}\boldsymbol{G})=\frac{\sigma_{1}(\boldsymbol{A}\boldsymbol{G})}{\sigma_{r}(\boldsymbol{A}\boldsymbol{G})}\leq\frac{\kappa(\boldsymbol{X})\left(1+\epsilon\right)}{\sigma_{r}(\boldsymbol{A}\boldsymbol{A}^{g})-\kappa(\boldsymbol{X})\epsilon}\approx\frac{1+\epsilon}{1-\epsilon}.
\]

\section{\label{sec:Convergence-iterative-refinement}Convergence of iterative
refinement}

We analyze the convergence properties of iterative refinement over
AGI.
\begin{proposition}
\label{prop:iterative-refinement} If $\mathcal{R}(\boldsymbol{A})\cap\mathcal{N}(\boldsymbol{A})=\{\boldsymbol{0}\}$
and $\mathcal{R}(\boldsymbol{G})=\mathcal{R}(\boldsymbol{A})$, then
the iterative refinement converges for all $\boldsymbol{b}\in\mathcal{R}(\boldsymbol{A})$
and $\boldsymbol{x}_{0}\in\mathcal{R}(\boldsymbol{A})$  if and only
if $\boldsymbol{Q}^{T}(\boldsymbol{I}-\boldsymbol{G}\boldsymbol{A})^{j}\boldsymbol{Q}$
tends to $\boldsymbol{0}$ as $j$ increases, where $\boldsymbol{Q}$
is composed of an orthonormal basis of $\mathcal{R}(\boldsymbol{A})$.
\end{proposition}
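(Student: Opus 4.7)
The plan is to reduce the fixed-point iteration \eqref{eq:fixed-point-iteration} to an iteration on the $r$-dimensional subspace $\mathcal{R}(\boldsymbol{A})$ using the orthonormal basis $\boldsymbol{Q}$, and then invoke the standard convergence criterion for linear iterations on a finite-dimensional space.

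First I would establish that $\mathcal{R}(\boldsymbol{A})$ is invariant under $\boldsymbol{I}-\boldsymbol{G}\boldsymbol{A}$. Indeed, for any $\boldsymbol{x}\in\mathcal{R}(\boldsymbol{A})$, we have $\boldsymbol{G}\boldsymbol{A}\boldsymbol{x}\in\mathcal{R}(\boldsymbol{G})=\mathcal{R}(\boldsymbol{A})$ by hypothesis, so $(\boldsymbol{I}-\boldsymbol{G}\boldsymbol{A})\boldsymbol{x}\in\mathcal{R}(\boldsymbol{A})$. Since $\boldsymbol{G}\boldsymbol{b}\in\mathcal{R}(\boldsymbol{G})=\mathcal{R}(\boldsymbol{A})$ as well, the iterates $\boldsymbol{x}_j$ produced by \eqref{eq:fixed-point-iteration} remain in $\mathcal{R}(\boldsymbol{A})$ whenever $\boldsymbol{x}_0\in\mathcal{R}(\boldsymbol{A})$. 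Next, the assumption $\mathcal{R}(\boldsymbol{A})\cap\mathcal{N}(\boldsymbol{A})=\{\boldsymbol{0}\}$ (i.e., $\boldsymbol{A}$ has index 1) guarantees that the restriction $\boldsymbol{A}|_{\mathcal{R}(\boldsymbol{A})}\colon\mathcal{R}(\boldsymbol{A})\to\mathcal{R}(\boldsymbol{A})$ is a bijection, so there is a unique $\boldsymbol{x}_\ast\in\mathcal{R}(\boldsymbol{A})$ with $\boldsymbol{A}\boldsymbol{x}_\ast=\boldsymbol{b}$; this $\boldsymbol{x}_\ast$ is a fixed point of \eqref{eq:fixed-point-iteration} because $\boldsymbol{G}\boldsymbol{A}\boldsymbol{x}_\ast=\boldsymbol{G}\boldsymbol{b}$.

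The error $\boldsymbol{e}_j=\boldsymbol{x}_j-\boldsymbol{x}_\ast\in\mathcal{R}(\boldsymbol{A})$ then satisfies $\boldsymbol{e}_j=(\boldsymbol{I}-\boldsymbol{G}\boldsymbol{A})^j\boldsymbol{e}_0$. Write $\boldsymbol{e}_0=\boldsymbol{Q}\boldsymbol{f}_0$ with $\boldsymbol{f}_0\in\mathbb{R}^r$ and set $\boldsymbol{T}=\boldsymbol{Q}^T(\boldsymbol{I}-\boldsymbol{G}\boldsymbol{A})\boldsymbol{Q}$. Because $\mathcal{R}(\boldsymbol{A})=\mathcal{R}(\boldsymbol{Q})$ is invariant under $\boldsymbol{I}-\boldsymbol{G}\boldsymbol{A}$ and $\boldsymbol{Q}\boldsymbol{Q}^T$ acts as the identity on $\mathcal{R}(\boldsymbol{A})$, we get $(\boldsymbol{I}-\boldsymbol{G}\boldsymbol{A})\boldsymbol{Q}=\boldsymbol{Q}\boldsymbol{T}$, and by a simple induction $(\boldsymbol{I}-\boldsymbol{G}\boldsymbol{A})^j\boldsymbol{Q}=\boldsymbol{Q}\boldsymbol{T}^j$. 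Multiplying on the left by $\boldsymbol{Q}^T$ yields $\boldsymbol{T}^j=\boldsymbol{Q}^T(\boldsymbol{I}-\boldsymbol{G}\boldsymbol{A})^j\boldsymbol{Q}$, and since $\boldsymbol{Q}$ has orthonormal columns, $\|\boldsymbol{e}_j\|=\|\boldsymbol{T}^j\boldsymbol{f}_0\|$.

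Finally, the iteration converges for every $\boldsymbol{x}_0\in\mathcal{R}(\boldsymbol{A})$ (equivalently, every $\boldsymbol{f}_0\in\mathbb{R}^r$) if and only if $\boldsymbol{T}^j\to\boldsymbol{0}$, which by the previous identity is exactly $\boldsymbol{Q}^T(\boldsymbol{I}-\boldsymbol{G}\boldsymbol{A})^j\boldsymbol{Q}\to\boldsymbol{0}$, proving both directions. The main obstacle is the bookkeeping in the ``if and only if'' reduction, in particular verifying the identity $(\boldsymbol{I}-\boldsymbol{G}\boldsymbol{A})^j\boldsymbol{Q}=\boldsymbol{Q}\boldsymbol{T}^j$ on the restricted subspace, which crucially relies on $\mathcal{R}(\boldsymbol{G})=\mathcal{R}(\boldsymbol{A})$, while the existence of the fixed point $\boldsymbol{x}_\ast$ in $\mathcal{R}(\boldsymbol{A})$ uses the index-1 hypothesis.
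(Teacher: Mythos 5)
Your proposal is correct and follows essentially the same route as the paper's proof: restrict the fixed-point iteration to the invariant subspace $\mathcal{R}(\boldsymbol{A})$, use the index-1 hypothesis to obtain an exact solution $\boldsymbol{x}_{*}\in\mathcal{R}(\boldsymbol{A})$, and reduce the error recurrence to the compressed matrix $\boldsymbol{Q}^{T}(\boldsymbol{I}-\boldsymbol{G}\boldsymbol{A})^{j}\boldsymbol{Q}$ acting on coordinates in $\mathbb{R}^{r}$. Your explicit verification of the identity $(\boldsymbol{I}-\boldsymbol{G}\boldsymbol{A})^{j}\boldsymbol{Q}=\boldsymbol{Q}\boldsymbol{T}^{j}$ is a slightly more careful rendering of the paper's step of left-multiplying by $\boldsymbol{Q}^{T}$, but the argument is the same.
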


\begin{proof}
Under the assumption of $\mathcal{R}(\boldsymbol{G})=\mathcal{R}(\boldsymbol{A})$
and $\boldsymbol{x}_{0}\in\mathcal{R}(\boldsymbol{A})$, $\boldsymbol{x}_{j}\in\mathcal{R}(\boldsymbol{A})$
for all $j\geq0$. Let $\boldsymbol{b}=\boldsymbol{A}\boldsymbol{x}$
for some $\boldsymbol{x}\in\mathbb{R}^{n}$. Since $\mathcal{R}(\boldsymbol{A})\cap\mathcal{N}(\boldsymbol{A})=\{\boldsymbol{0}\}$
is equivalent to $\mathcal{R}(\boldsymbol{A})\oplus\mathcal{N}(\boldsymbol{A})=\mathbb{R}^{n}$,
there exist $\boldsymbol{x}_{*}\in\mathcal{R}(\boldsymbol{A})$ and
$\boldsymbol{x}_{N}\in\mathcal{N}(\boldsymbol{A})$ such that $\boldsymbol{x}=\boldsymbol{x}_{*}+\boldsymbol{x}_{N}$.
Then, $\boldsymbol{x}_{*}$ is an LS solution in that $\boldsymbol{A}\boldsymbol{x}_{*}=\boldsymbol{A}(\boldsymbol{x}-\boldsymbol{x}_{N})=\boldsymbol{b}$.
At the $(j+1\text{st})$ step, $\boldsymbol{x}_{j+1}-\boldsymbol{x}_{*}=(\boldsymbol{I}-\boldsymbol{G}\boldsymbol{A})\boldsymbol{x}_{j}+\boldsymbol{G}\boldsymbol{b}-\boldsymbol{x}_{*}=(\boldsymbol{I}-\boldsymbol{G}\boldsymbol{A})\boldsymbol{x}_{j}+\boldsymbol{G}\boldsymbol{A}\boldsymbol{x}_{*}-\boldsymbol{x}_{*}=(\boldsymbol{I}-\boldsymbol{G}\boldsymbol{A})(\boldsymbol{x}_{j}-\boldsymbol{x}_{*}).$
Let $\boldsymbol{x}_{j}-\boldsymbol{x}_{*}=\boldsymbol{Q}\boldsymbol{s}_{j}$.
Then, $\boldsymbol{Q}\boldsymbol{s}_{j}=(\boldsymbol{I}-\boldsymbol{G}\boldsymbol{A})^{j}\boldsymbol{Q}\boldsymbol{s}_{0}$.
By left-multiplying $\boldsymbol{Q}^{T}$ and using the fact that
$\boldsymbol{Q}^{T}\boldsymbol{Q}=\boldsymbol{I}_{r}$, we have $\boldsymbol{s}_{j}=\boldsymbol{Q}^{T}(\boldsymbol{I}-\boldsymbol{G}\boldsymbol{A})^{j}\boldsymbol{Q}\boldsymbol{s}_{0}$.
Hence, $\left\Vert \boldsymbol{x}_{j}-\boldsymbol{x}_{*}\right\Vert =\left\Vert \boldsymbol{Q}\boldsymbol{s}_{j}\right\Vert =\left\Vert \boldsymbol{s}_{j}\right\Vert \leq\left\Vert \boldsymbol{Q}^{T}(\boldsymbol{I}-\boldsymbol{G}\boldsymbol{A})^{j}\boldsymbol{Q}\right\Vert \left\Vert \boldsymbol{s}_{0}\right\Vert $,
so $\left\Vert \boldsymbol{x}_{j}-\boldsymbol{x}_{*}\right\Vert $
tends to $0$ as $j$ approaches $\infty$ if and only if $\boldsymbol{Q}^{T}(\boldsymbol{I}-\boldsymbol{G}\boldsymbol{A})^{j}\boldsymbol{Q}$
tends to $\boldsymbol{0}$. 
\end{proof}

\end{document}